\newtheorem{Thm}{Theorem}
\newtheorem{Prop}{Proposition}
\newtheorem{Cor}{Corollary}
\newtheorem{Ass}{Assumption}
\newtheorem{Lem}{Lemma}
\newtheorem{Def}{Definition}
\newenvironment{proof}[1][Proof]{\textbf{#1.} }{\ \rule{0.5em}{0.5em}}
\newcommand\indep{\protect\mathpalette{\protect\independenT}{\perp}}
\def\independenT#1#2{\mathrel{\rlap{$#1#2$}\mkern2mu{#1#2}}}
\newcommand{\eps}{\varepsilon}
\newcommand{\bs}{\backslash}
\newcommand{\G}{\mathcal{G}} 
\newcommand{\gzu}{\G_{0-1}}
\newcommand{\Gfull}{\G_{\textit{full}}}
\newcommand{\Gmin}{\tilde{\G}}\newcommand{\Gq}{\G^{(q)}}
\newcommand{\gq}{\G^{(q)}}
\newcommand{\gun}{\G^{(1)}}
\newcommand{\gunhat}{\hat{\G}^{(1)}}
\newcommand{\p}{P}
\newcommand{\n}{N}
\newcommand{\R}{\mathbb{R}}
\newcommand{\proba}{\mathbb{P}}
\newcommand{\pj}{\p_{j}}
\newcommand{\Pmi}{\p_{i}}
\newcommand{\NpaxitGmin}{N_{\text{pa}}(X^i_t, \Gmin)}
\newcommand{\NpaMaxGmin}{N_{\text{pa}}^{\text{Max}}(\Gmin)}
\newcommand{\ie}{\textit{i.e.} \rm}
\title{\huge{Inferring dynamic genetic networks\\
 with low order independencies}
\author{Sophie L\`ebre$^{\star}$\\
 \texttt{s.lebre@imperial.ac.uk}\\
}
}
\date{}
\begin{document}

%%%%%%%%%%%%%%%%%%
%%                     TITRE                   %%
%%%%%%%%%%%%%%%%%%
\maketitle

\vspace{-0.5cm}

\begin{center}
Universit\'e d'Evry-Val-d'Essone,
CNRS UMR 8071,
INRA 1152,\\
Laboratoire Statistique et G\'enome\\
523 place des Terrasses,
91000 Evry,
France.\\
\vspace{0.3cm}
$^{\star}$\small{Current address:  Centre for Bioinformatics,
Division of Molecular Biosciences,}\\
\small{Imperial College London,
South Kensington Campus,
SW7 2AZ London, UK.}
\end{center}

%%%%%%%%%%%%%%%%%%
%%            Abstract : DEBUT         %%
%%%%%%%%%%%%%%%%%%

  \begin{center}
    \textbf{Abstract}
  \end{center}
  In  this paper,  we propose  a novel  inference method  for  dynamic genetic
  networks which makes it possible to  deal with a number of time measurements
  $n$ much smaller than the number of genes $p$.  The approach is based on the
  concept of low order conditional dependence graph which we extend here to the
  case of Dynamic Bayesian Networks.  Most of our results are based
  on  the theory  of graphical  models  associated with  Directed  Acyclic
  Graphs (DAGs).  In this way, we define a DAG $\Gmin$ which describes exactly
  the  \textit{full order  conditional  dependencies} given  the  past of  the
  process.  Then, to cope with the large $p$ and small $n$ estimation case, we
  propose  to approximate  DAG $\Gmin$  by considering  low  order conditional
  independencies.  We introduce  partial $q^{th}$ order conditional dependence
  DAGs  and analyze their  probabilistic properties.   In general,  DAGs $\gq$
  differ from $\Gmin$  but still reflect relevant dependence  facts for sparse
  networks such as genetic networks.   By using this approximation, we set out
  a non-Bayesian  inference method and  demonstrate the effectiveness  of this
  approach on both simulated and  real data analysis.  The inference procedure
  is implemented in the R package 'G1DBN' which is available from the CRAN archive. 

\textbf{\textit{Keywords:}} conditional independence,  Dynamic Bayesian Network, Directed
  Acyclic Graph, networks inference, time series modelling.

%%%%%%%%%%%%%%%
%%            Abstract : FIN         %%
%%%%%%%%%%%%%%%%

\section*{Introduction}\label{sec:intro}
The development of microarray technology allows to simultaneously measure
the expression levels of many genes at a precise time point. 
Thus it has become possible to observe gene expression levels across
a  whole  process  such as the  cell  cycle  or response  to  radiation  or different
treatments.  The objective is now to recover gene regulation phenomena from this data. 
We are looking for simple relationships such as ``gene $i$ activates gene~$j$''. 
But we also want to capture more complex scenarios such as auto-regulations,
feed-forward loops, multi-component loops...  as described by Lee et al. 
\cite{Lee02} in the case of the transcriptional regulatory network of the yeast \textit{Saccharomyces cerevisiae}.

To such an aim, we both need to accurately take into account
temporal dependencies and to deal with the dimension of the problem when
the number $p$ of observed genes is much higher than the number $n$ of observation
time points. 
Moreover we know that 
most  of the  genes whose expression has been monitored using microarrays are  not taking  part  in the temporal evolution of  the system. So we  want to determine the few  `active' genes that are involved in the regulatory machinery,
as well as  the relationships between them. 
In short, we want to infer a network representing the dependence relationships
which govern a system composed  of several agents
from the observation  of their activity across short time series.

\paragraph{Static  Modelling}
Such  gene networks  were first described using static modelling
and mainly non oriented networks. 
One of the first tools used to describe
interactions   between    genes   is   the    \textit{relevance   network}  \cite{Butte00}  or
\textit{correlation network}~\cite{Streuer03}.  Better known as the
\textit{covariance graph} \cite{Cox96} in  graphical models theory, this
undirected graph describes the pair-wise correlation between genes. 
Its  topology is derived from 
the covariance matrix between the gene
expression levels; an undirected edge  is drawn between two variables whenever
they are correlated. However, the correlation between two
variables may be caused by linkage with other variables. This creates spurious
edges due to indirect dependence relationships.

%\paragraph{GGM}
%\textbf{\textit{Concentration graphs}}
 Consequently, there has been great interest in the \textit{concentration graph} \cite{Lauritzen96}, also called
the \textit{covariance selection} model,
which  describes the  \textit{conditional}   dependence  structure  between gene
expression using Graphical Gaussian Models (GGMs).
 Let $Y\!=\!(Y^i)_{1\leq i \leq p}$ be a multivariate Gaussian vector representing
 the expression levels of $p$ genes.  An undirected edge is drawn
between two variables $Y^i$ and $Y^j$ whenever they are conditionally dependent given the
remaining variables  (See  Figure  \ref{fig:motifDBN}B). 
The standard theory of estimation in GGMs~\cite{Lauritzen96,Whittaker90}
can be exploited only when the number of measurements $n$ is much higher
than the number of variables $ p$. This ensures that the sample covariance matrix is positive definite with
probability one. However, in most microarray gene expression datasets,
we have to cope with the opposite situation $(n<<p)$. 
Thus, the growing interest in ``small $n$, large $p$'' furthered the development of numerous alternatives (Sch\"afer and
Strimmer \cite{Strimmer05a,Strimmer05b} , Waddell and Kishino \cite{Waddell00a,Waddell00b},  Toh  and   Horimoto  \cite{Toh02a,Toh02b},  Wu  et
al. \cite {Wu03}, Wang et al. \cite{Wang03}). 
Even  though   concentration  graphs  allow  to  point   out  some  dependence
relationships between genes, they do  not offer an accurate description of the
interactions. Firstly,  no direction is  given to the  interactions. Secondly,
some  motifs containing  cycles as in Figure \ref{fig:motifDBN}A cannot be  properly  represented.

%\paragraph{Bayesian networks (BN)}
%\textbf{\textit{Bayesian networks }}
Contrary  to   the  previous   undirected  graphs,  Bayesian   networks  (BNs)
\cite{Friedman00}
 model directed relationships.  Based on a probabilistic measure, a BN representation of a
model is defined by a Directed Acyclic Graph (DAG) and the
set of conditional probability  distributions  of  each  variable given  its
parents  in  the DAG  \cite{Pearl88}. The theory of graphical models
\cite{Whittaker90,Edwards95,Lauritzen96} then allows to derive conditional
independencies from this DAG.  However, the acyclicity
constraint in static BNs is a serious restriction given the expected structure of
genetic networks.   

\begin{figure}
\begin{picture}(10,10)
\put(20,-10){A \hspace{6.2cm} B}
\put(20,-63){C}
\end{picture}

\begin{center}
\includegraphics[width=11cm]{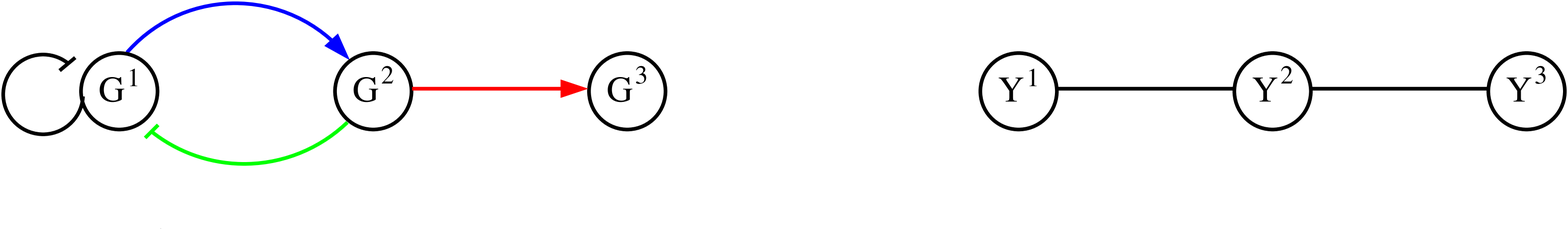}\\
\includegraphics[width=10cm]{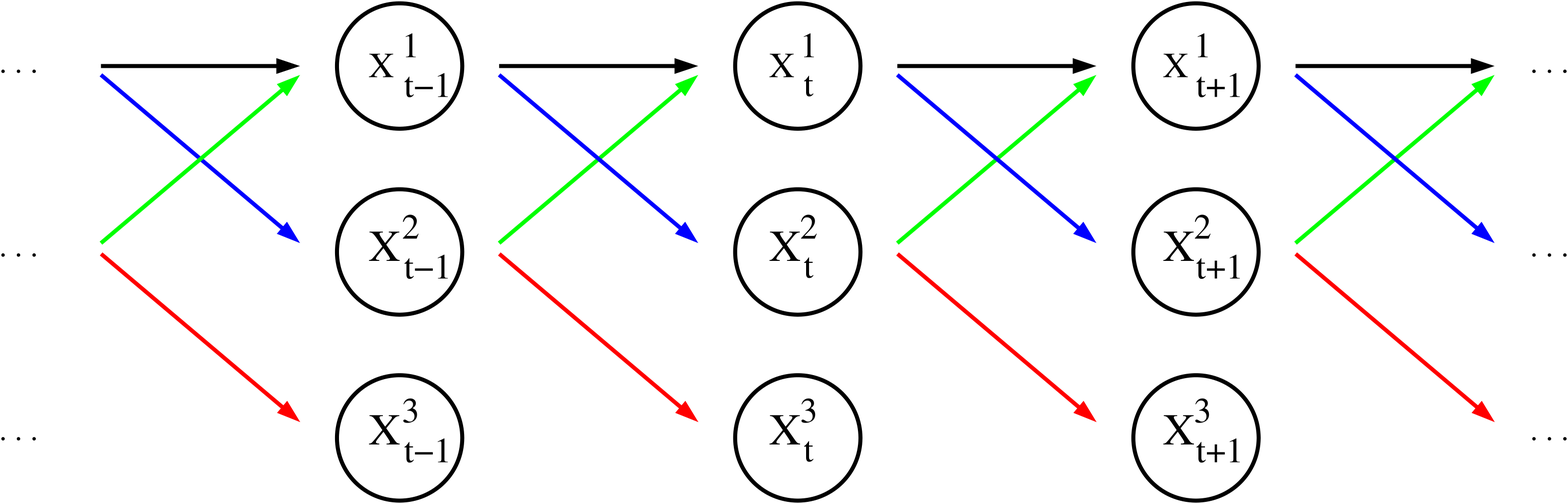}
%%ABC%%\includegraphics[width=10cm]{\pathGraphs figMotifdbnABC.ps}
\end{center}
\vspace{-0.5cm}
\caption{(A) A biological regulation motif. (B) The concentration graph corresponding to the motif A.  For all $i \geq 3$, $Y^i$ is a Gaussian variable representing the expression  level of  gene~$G^i$.  Some cycles  cannot be  represented  on the
concentration graph. (C) Dynamic  network  equivalent  to   the  regulation  motif A. Each vertex $X^i_t$ represents the expression level of gene $G^i$ at  time $t$. This graph is acyclic and allows to define a Bayesian network. }
\label{fig:motifDBN}
\end{figure}

\paragraph{Dynamic Bayesian networks}
This limitation can be overcome by employing  Dynamic Bayesian networks (DBNs) introduced for the
analysis of gene  expression time series by Friedman  et al. \cite{Friedman98}
and Murphy and Mian \cite{Murphy99}. In DBNs, a gene is no longer represented
by a single vertex but by as many vertices as time points in the
experiment. A dynamic network (Figure \ref{fig:motifDBN}C)  can then be obtained
by   \textit{unfolding  in   time}  the   initial  cyclic   motif   in  Figure
\ref{fig:motifDBN}A. The direction according to time guarantees 
the acyclicity of this dynamic network and consequently allows to define a Bayesian
network. 
The nature of the relationships  (positive/negative) 
does not appear in this DAG but is derived from  estimates of the model parameters. 

%\textbf{Homogeneous process}
The very high  number $p$ of genes simultaneously  observed raises a dimension
problem.   Moreover, a  large majority  of  time series  gene expression  data
contain no or very few repeated  measurements of the expression level of the
same  gene   at  a  given  time.   Hence,  we  assume  that   the  process  is
\textit{homogeneous} across time. This means that the system
is considered to be governed by the same rules during the whole experiment.
Consequently, the temporal dependencies are homogeneous: any edge
is present or absent during the whole process. 
 This is a strong assumption which is not necessarily satisfied. Nevertheless,
 this condition is necessary to carry out estimation unless we have several measurements of each gene expression at each time point. %Indeed, in that case, we observe $n-1$ repeated measurements of the expression level of each gene at two successive time points. 

Up to now, various DBN representations based on different probabilistic
models    have   been    proposed    (discrete   models    \cite{Ong02,Zou05},
multivariate auto-regressive process \cite{OpgenStrimmer07}, State
Space  or Hidden  Markov Models  \cite{Perrin03,Wu04,Rangel04,Beal05},
nonparametric additive regression model \cite{Imoto02,Imoto03,Kim04,Sugimoto04}).  See   also    Kim   et
al. \cite{Kim03} for a review of such models. 
Faced with so  much diversity, we introduce in this paper sufficient  conditions  for a model to admit a DBN representation
and  we set out  a concrete   interpretation in  terms of  dependencies between
variables by using the theory of graphical models for DAGs.

Our DBN representation is based on a DAG $\Gmin$
(e.g. like the DAG of Fig. \ref{fig:motifDBN}C) which describes exactly the full order conditional dependencies
given all the remaining \textit{past} variables (See Section~\ref{sec:DBN}). 
This approach extends the principle of the concentration graph
showing conditional independencies to the dynamic case. 

\paragraph{Dimension reduction}
Even under  the assumption of homogeneity,  which enables to  use the pairs of successive time
point gene expression  as repeated measurements,
we  
have to deal with the ``curse of dimensionality'' when inferring the %unknown 
structure of DAG $\Gmin$.  The difficulty lies in coping with the large $p$ and small $n$
estimation case.  Several inference methods have been proposed for the
estimation of the  topology of the DAG defining the various DBNs  quoted above. To name a few,
Murphy \cite{Murphy01} implemented several Bayesian structure
learning procedures for %both static and 
dynamic models in the %open-source 
Matlab package BNT (Bayes
Net Toolbox);
Ong et al.  \cite{Ong02} reduce the
dimension of the problem by considering prior knowledge; Perrin et al. \cite{Perrin03}
use an extension of the linear regression;
Wu et al.  \cite{Wu04} use factor analysis and Beal et
al.   \cite{Beal05} develop  a  variational Bayesian  method;  Zou and  Conzen
\cite{Zou05} limit  potential regulators to  the genes with either  earlier or
simultaneous expression changes and estimate the transcription time lag; Opgen-Rhein and
Strimmer \cite{OpgenStrimmer07} proposed a model
selection procedure  based on an  analytic shrinkage approach.
However, a powerful approach based on the consideration of zero- and first-order conditional independencies to   model concentration graphs  has  gained  attention. 
When $n<<p$, Wille et al.  \cite{Wille04,Wille06} propose to approximate the
concentration graph by the graph $\gzu$ describing zero- and first-order conditional
independence. An  edge between the variables  $Y^i$ and $Y^j$ is  drawn in the
graph $\G_{0-1}$  if and only  if, zero- and first-order  correlations between
these two variables both differ from zero, that is, if 
\begin{equation}
r(Y^i,Y^j)\not=0 \ \ \text{ and } \ \ \forall
k \in \{1,...,p\} \bs \{i,j\}, \ r(Y^i,Y^j \vert Y^k) \not=0,
\end{equation}
\noindent where $r(Y^i,Y^j \vert Y^k)$ is the partial correlation between 
$Y^i$ and $Y^j$ given $Y^k$. 
Hence, whenever the correlation between two variables $Y^i$ and $Y^j$ can be
entirely explained by the effect of some variable $Y^k$, no edge is drawn between them.  

This procedure allows a drastic dimension reduction: by   using   first   order   conditional
correlations, estimation can be carried out accurately even with a small
number of observations. 
Even if the graph 
of zero- and first-order conditional independence differs from
the concentration graph 
in general, it still reflects some measure of conditional independence.  Wille
et  al.   show  through  simulations  that  the graph  $\gzu$  offers  a  good
approximation of sparse concentration  graphs and demonstrate that both graphs
coincide exactly if the  concentration graph is a forest (\cite{Wille06},
Corollary  1).   This  approach  has   also  been  used  by  Magwene  and  Kim
\cite{Magwene04}  and de  la  Fuente et  al.   \cite{Fuente04} for  estimating
undirected  gene  networks from  microarray  gene  expression  of the  yeast
\textit{Saccharomyces cerevisiae}.  Castelo and Roverato  \cite{Roverato06}  
investigate such  undirected $q^{th}$ order partial  independence graphs for
$q \geq 1$ and present a thorough analysis of their properties.  
In this paper, %In order to approximate DAG $\Gmin$, 
we extend  this approach by
defining $q^{th}$ order order conditional dependence DAGs $\gq$ for DBN representations. 
Then, by basing our results on these low order conditional dependence DAGs, we propose a novel inference method
for dynamic genetic networks which makes it possible to deal with the ``small $n$, large $p$'' problem.

The remainder of the paper  is organized as follows. In Section \ref{sec:DBN},
we provide sufficient  conditions for a DBN modelling  of time series describing
temporal dependencies.  In particular, we show the existence of a minimal DAG $\Gmin$
which  allows such  a  DBN representation.   To  reduce the  dimension of  the
estimation of the topology of $\Gmin$,
we propose to approximate $\Gmin$ by $q^{th}$ order conditional
dependence DAGs $\gq$ and analyze their probabilistic properties in Section~\ref{sec:Gq}. % By basing
From conditions on the topology of $\Gmin$ and the faithfulness assumption, we
establish inclusion relationships between both DAGs $\Gmin$ and $\gq$. 
In Section~\ref{sec:estim}, we exploit our results on % low order dependence 
DAGs $\gq$ 

Finally, validation  is obtained  on both simulated  and real data  in Section
\ref{sec:valid}.  We use our inference procedure for the analysis of two 
microarray time course data sets: the Spellman's yeast cell cycle data
\cite{Spellman98} and the diurnal cycle data on the starch metabolism of
\textit{Arabidopsis Thaliana} collected by Smith et al. \cite{Smith04}. 

\begin{table}
\caption{Notations}
\label{tab:specGmin}
\vspace{0.1cm}
\begin{tabular}{|llll|}
\hline
$\p$\!\!\!\!\!\!\!\!&=&\!\!\!\!$\{ 1 \leq i \leq p \}$ &set of the observed genes,\\
% $\forall j \in \p, \ \pj=\p \bs \{j\}$&\\
 $\Pmi$\!\!\!\!\!\!\!\!&=&\!\!\!\!$ p \bs \{i\}$&set of the observed genes except gene $i$,\\
$\n$\!\!\!\!\!\!\!\!&=&\!\!\!\!$\{ 1 \leq t \leq n \}$ & set of observation times,\\

$X$\!\!\!\!\!\!\!\!&=&\!\!\!\!$\{X^i_t; i \in \p, t \in \n \}$ \!\!\!\!& stochastic process (gene expression %levels
time series),\!\!\\

$\G $\!\!\!\!\!\!\!\!&=&\!\!\!\!$(X, E(\G))$& a DAG whose vertices are  defined by $X$ and \\
&&&edges by $ E(\G) \subseteq X \times X$,\\
$\Gmin$\!\!\!\!\!\!\!\!&&&  the  ``true''  DAG  describing  the set of\\
&&& full  order  conditional
dependencies, \\ %(Proposition \ref{prop:Gmin}),\\

$\gq$\!\!\!\!\!\!\!\!&&& $q^{th}$ order conditional dependence DAG, \\ %(Definition \ref{def:gq}).\\
\hline
\end{tabular}
\end{table}

\section{A minimal DBN  representation}\label{sec:DBN}

Let $\p\!=\!\{ 1 \leq i \leq p \}$ describe the set of observed genes
and $\n\!=\!\{~1~\leq~t~\leq~n~\}$ the set of observation times. \!\!\!\! In this
paper, \!\!\! we consider a discrete-time stochastic process $X=\{X^i_t; i \in \p, t \in \n \}$
taking real values  and assume the joint probability  distribution $\proba$ of
the process $X$ has density $f$ with respect to Lebesgue measure on $\R^{p \times n}$. 
We denote by $X_t =\{X^i_t; i \in \p \}$ the set of the $p$ random variables observed at time $t$ and
$X_{1:t} = \{X^i_s; i \in \p, s \leq t \}$ the set of the random variables observed before time $t$.

The main result of this section is set out in Proposition \ref{prop:Gmin}; 
we show that process $X$ admits a  DBN representation according to a minimal DAG $\Gmin$
whose edges describe exactly the set of direct dependencies between successive
variables $X^j_{t-1}, X^i_t$ %observed at two successive time points 
given the past of the process. For an illustration, the minimal DAG $\Gmin$ is given in the
 case of an AR(1) model in Subsection \ref{ssec:ar1}.  
Most of our results are derived from the theory of graphical models associated with DAGs~\cite{Lauritzen96}. 
Note that, even though  we need to consider a  homogeneous DBN for the  inference of gene
interaction  networks,  the  theoretical results  introduced in Sections  \ref{sec:DBN}  and
\ref{sec:Gq} are valid without assuming homogeneity across time.

\subsection{Background}% : BN representation and elements of graph theory.}

\paragraph{Theory of graphical models associated with DAGs}
\ Let $\G=(X, E(\G))$ be a DAG whose vertices are the variables $X= \{X^i_t; i
\in \p, t \in \n\}$ and whose set of 
edges $ E(\G) $ is a subset of $X \times X$. We quickly recall here elements of the theory of
graphical models associated with DAGs  \cite{Lauritzen96}. A characterization of a Bayesian
Network (BN) representation for a process $X$ is given in Proposition~\ref{Prop:BNrep}.
\begin{Def}\label{Def:parents} \textbf{(Parents, Lauritzen \cite{Lauritzen96})} 
 The \textit{parents} of a vertex $X^i_t$ in $\G$, denoted by $\text{pa}(X^i_t, \G)$,
are the variables having an edge pointing towards the vertex $X^i_t$ in $\G$, 
$$\text{pa}(X^i_t, \G):=\{X^j_{s}  \text{ such that  } (X^j_{s}, X^i_t) \in  E(\G); j
\in \p, s \in \n\}.$$
\end{Def}
\begin{Prop}\label{Prop:BNrep}\textbf{(BN representation, Pearl \cite{Pearl88})}
The probability  distribution $\proba$  of process  $X$ admits  a Bayesian
Network (BN) representation according to DAG $\G$ whenever its density~$f$
factorizes as a product of the conditional density of
each variable $X_i^t$ given its parents in~$\G$,
\vspace{-0.2cm}
$$f(X) = \prod_{i \in \p} \prod_{t \in \n} f(X^i_t\vert \text{pa}(X^i_t,\G)).$$
\end{Prop}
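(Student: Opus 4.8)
The proposition asserts the implication: if $f$ factorises as $f(X)=\prod_{i\in\p}\prod_{t\in\n} f(X^i_t\mid\text{pa}(X^i_t,\G))$ then $\proba$ admits a BN representation according to $\G$. If ``BN representation'' is understood (after Pearl \cite{Pearl88}) to mean exactly this factorisation property, the statement is a tautology and I would simply say so; the content worth spelling out is the equivalence of the factorisation with $\proba$ being Markov with respect to $\G$, i.e. with the collection of conditional independencies read off $\G$ by $d$-separation, and it is that equivalence I would prove. The starting observation is that acyclicity of $\G$ yields a total order $\prec$ on the vertices $X=\{X^i_t\}$ compatible with the edges, so that for each vertex $\text{pa}(X^i_t,\G)\subseteq\text{pred}(X^i_t)$, the set of $\prec$-predecessors, and every predecessor is a non-descendant of $X^i_t$.

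First I would derive the ordered (directed) Markov property: integrating the assumed product form over the variables in decreasing $\prec$-order makes it telescope, giving $f(X^i_t\mid\text{pred}(X^i_t))=f(X^i_t\mid\text{pa}(X^i_t,\G))$ at every vertex. The converse direction, needed only if one wants an ``if and only if'', is the easy one: the chain rule along $\prec$ always gives $f(X)=\prod f(X^i_t\mid\text{pred}(X^i_t))$, and the local Markov property removes the non-parent non-descendants from each conditioning set. Then I would upgrade the ordered/local property to the directed global ($d$-separation) Markov property, either by citing the standard equivalence theorem for DAGs (Lauritzen \cite{Lauritzen96}; Verma--Pearl, see \cite{Pearl88}) or, for a self-contained proof, by induction on the size of the conditioning set using the semi-graphoid axioms (decomposition, weak union, contraction) together with the moral-graph characterisation of $d$-separation.

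The hard part will be this last step --- extracting \emph{all} the $d$-separation independencies from the single factorisation identity --- since it is genuinely not a one-line marginalisation and is where the combinatorics of the DAG enters; everything else (existence of $\prec$, the chain rule, the telescoping integration that strips non-parents out of the conditioning sets) is elementary bookkeeping. Accordingly my plan is to state the global-Markov equivalence as a lemma, prove it by graphoid manipulations or simply cite Lauritzen \cite{Lauritzen96}, and let the factorisation $\Leftrightarrow$ ordered-Markov equivalence carry the rest.
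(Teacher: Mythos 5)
Your reading is the right one: the paper offers no proof of this proposition --- it is recalled from Pearl \cite{Pearl88} and functions as the \emph{definition} of a BN representation used throughout, so treating the statement as definitional/tautological is exactly the paper's stance. The further equivalence you sketch (factorisation $\Leftrightarrow$ ordered/local Markov via the chain rule and telescoping integration, then the upgrade to the directed global Markov property) is correct standard material, but the paper does not prove it either: the global Markov consequence is simply cited separately as Proposition~\ref{prop:gAnc} (Lauritzen \cite{Lauritzen96}, Corollary~3.23), so nothing beyond the citation is required here.
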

Throughout this paper, a central notion is that of conditional independence of
random variables. 
 Two random variables $U$ and $V$ are \textit{conditionally independent}
 given a third variable $W$  (and we write $U~\indep~V~\vert~W$)
 if they are independent in the joint probability distribution $\proba_{U,V,W}$ of the three random variables $(U,V,W)$. 
In other words, $U$ and $V$ are conditionally independent
 given $W$ if for any possible value $w$ of $W$, variables $U$ and~$V$ are %conditionally 
independent given the variable $W = w$.
This result generalizes to disjoint sets of variables.
Such   conditional  independence   relationships  can   be  obtained   from   a  BN
representation by using graphical theory associated with DAGs,
which is essentially based  on the \textit{directed global Markov property}
recalled in Proposition \ref{prop:gAnc}.

\begin{figure}[t]
\begin{picture}(10,10)
\put(40,-5){A}
\put(40,-115){ B}
\end{picture}
\begin{center}
\vspace{-0.5cm}
\includegraphics[width=9cm]{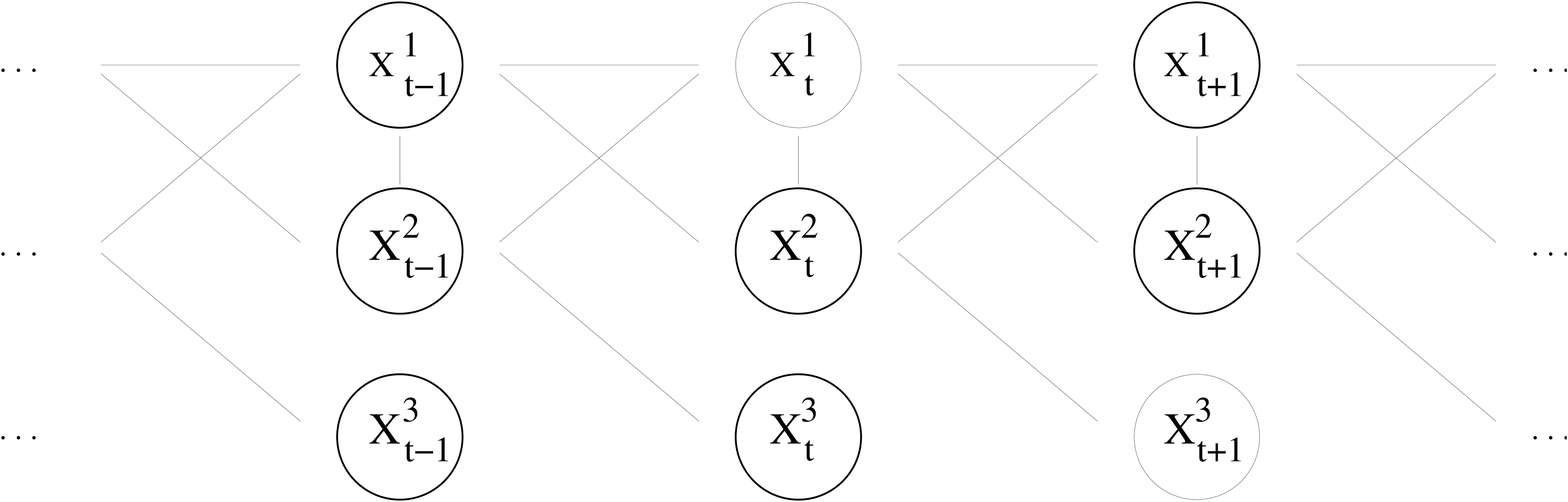}\\
\vspace{1cm}

\includegraphics[width=9cm]{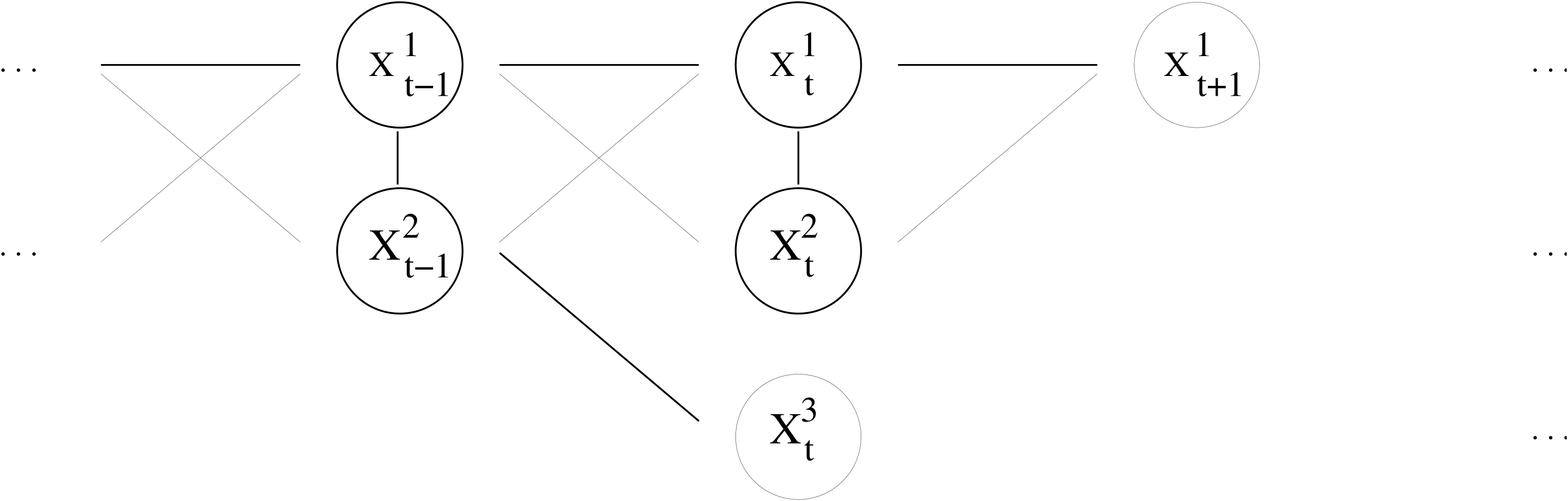}
\end{center} 
\vspace{-0.5cm}
\caption{(A) Moral graph of the DAG in Figure \ref{fig:motifDBN}C. For all $t>1$, the 
parents of  the variable $X^1_t$ are  `married',  that is connected  by an
undirected edge. (B)~Moral graph  of the smallest  ancestral set containing  the variables
  $X^1_{t+1}$,   its  parents  in   the  DAG   in  Figure~\ref{fig:motifDBN}C  and
  $X^3_t$.  As the set $(X^1_t,X^2_t)$ blocks all paths between $X^3_t$ and
$X^1_{t+1}$, thus $\{ X^1_t, X^2_t\}$ separates $X^1_{t+1}$from $X^3_t$ and we have $X^1_{t+1}
\indep X^3_{t}\ \vert \ (X^1_t,X^2_t) $.} 
\label{fig:moralG}
\end{figure}

\vspace{-0.2cm}

\begin{Def}\label{def:gm} \textbf{(Moral graph, Lauritzen \cite{Lauritzen96})} 
The moral graph $\G^m$ of DAG $\G$ %, denoted $\G^m$,
  is obtained  from $\G$ by first  `marrying' the parents  (draw an undirected
  edge  between each  pair of  parents of each variable $X^i_t$)  and then
  deleting the directions of the original  edges of $\G$. For an illustration, Figure \ref{fig:moralG}A displays the moral graph of the DAG in Figure \ref{fig:motifDBN}C. 
\end{Def}

\begin{Def}\label{def:ancestral} \textbf{(Ancestral set, Lauritzen \cite{Lauritzen96})}
%In a directed graph $\G$,
The subset $S$ is ancestral if and  only if, for all $\alpha
\in S$, the parents of $\alpha$ satisfy $ pa(\alpha, \G) \subseteq S$. Hence, for any subset $S$ of vertices,
there  is  a  smallest  ancestral  set  containing $S$  which  is  denoted  by
$An(S)$. Then $\G_{An(S)}$  refers to the graph of  the smallest ancestral set
$An(S)$. See Figure \ref{fig:moralG}B for an illustration.
\end{Def}

\begin{Prop}\label{prop:gAnc} \textbf{(Directed global Markov property, Lauritzen \cite{Lauritzen96}, \newline Corollary 3.23)} Let
  $\proba$ admit a BN representation according to $\G$. Then,
$$E \indep F \ \vert \ S,$$
whenever all paths from $E$ to $F$ intersect $S$ in $(\G_{An(E \cup
F \cup S)})^m$, the moral graph of the smallest ancestral set containing $E \cup
F \cup S$. We say that $S$ \textbf{separates} $E$ from $F$.
\end{Prop}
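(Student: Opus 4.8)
The plan is to reduce this \emph{directed} separation statement to the classical global Markov property for \emph{undirected} graphs, in three steps: (i) restrict the distribution to the ancestral set $A:=An(E\cup F\cup S)$ and show that its marginal factorizes over the induced sub-DAG $\G_A$; (ii) deduce that this marginal factorizes over the cliques of the moral graph $(\G_A)^m$; and (iii) apply the undirected global Markov property to $(\G_A)^m$, in which $S$ separates $E$ from $F$ by hypothesis.

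For step (i) I would start from the factorization $f(X)=\prod_{i\in\p}\prod_{t\in\n} f(X^i_t\mid \text{pa}(X^i_t,\G))$ granted by Proposition~\ref{Prop:BNrep} and integrate out the variables of $A^c:=X\setminus A$. The crucial observation is that, because $A$ is ancestral (Definition~\ref{def:ancestral}), $A^c$ is closed under taking children: if some $X^j_s\in A^c$ had a child $X^i_t\in A$, then ancestrality would force $X^j_s\in\text{pa}(X^i_t,\G)\subseteq A$, a contradiction. Order $A^c$ as $v_1,\dots,v_m$ by reversing a topological order of $\G_{A^c}$; then, once $v_1,\dots,v_{k-1}$ have been integrated out, $v_k$ is a parent of no surviving variable (its children lie in $A^c$ and precede it in this order), so $\int f(v_k\mid\text{pa}(v_k,\G))\,dv_k=1$ and exactly the $v_k$-factor disappears, leaving all other conditional densities and parent sets intact. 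After $m$ such steps the marginal density of $A$ equals $\prod_{X^i_t\in A} f(X^i_t\mid\text{pa}(X^i_t,\G_A))$.

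Step (ii) is then immediate from Definition~\ref{def:gm}: marrying the parents makes each $\{X^i_t\}\cup\text{pa}(X^i_t,\G_A)$ a complete subset of $(\G_A)^m$, so every factor above is supported on a clique and $\proba$ restricted to $A$ is an undirected graphical model over $(\G_A)^m$. For step (iii) I would invoke the easy (``factorization $\Rightarrow$ global Markov'') direction of the Hammersley--Clifford theorem, which needs no positivity assumption: a distribution factorizing over the cliques of an undirected graph satisfies $U\indep V\mid W$ whenever $W$ separates $U$ from $V$ in the graph. Since by hypothesis $S$ separates $E$ from $F$ in $(\G_A)^m=(\G_{An(E\cup F\cup S)})^m$, this yields $E\indep F\mid S$ in the marginal over $A$; and because conditional independence of $E$ and $F$ given $S$ depends only on the joint law of $E\cup F\cup S\subseteq A$, the same relation holds under $\proba$.

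The delicate point is step (i): one must check with care that marginalizing over an ancestral complement preserves the DAG factorization over the induced subgraph --- that $A^c$ is descendant-closed, that the reversed topological order makes each eliminated vertex parentless among the survivors, and that each integration removes exactly one factor and modifies no remaining conditional density. After that, steps (ii)--(iii) are routine undirected-graph theory; a longer but equivalent alternative would be to prove directly that moral-graph separation on $An(E\cup F\cup S)$ coincides with d-separation in $\G$, but the factorization route is more self-contained here.
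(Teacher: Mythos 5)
The paper does not actually prove this proposition: it is imported verbatim as background from Lauritzen (1996), Corollary 3.23, and used as a black box in the later proofs, so there is no in-paper argument to compare against. Your proof is correct, and it is in substance the standard textbook proof of that corollary. Step (i) is sound: ancestrality of $A=An(E\cup F\cup S)$ makes its complement descendant-closed, so integrating the complement in reverse topological order eliminates exactly one conditional-density factor at each step and leaves the factorization $\prod_{X^i_t\in A} f(X^i_t\mid \text{pa}(X^i_t,\G_A))$ for the marginal on $A$ (note $\text{pa}(X^i_t,\G)=\text{pa}(X^i_t,\G_A)$ precisely because $A$ is ancestral, which you use implicitly). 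Step (ii) is immediate since each set $\{X^i_t\}\cup\text{pa}(X^i_t,\G_A)$ is complete in $(\G_A)^m$, and step (iii) correctly invokes the factorization-implies-global-Markov property for undirected graphs, which indeed needs no positivity assumption; since $E\cup F\cup S\subseteq A$, the conclusion transfers to $\proba$. Two cosmetic remarks only: the result you cite in (iii) is usually referred to as the undirected global Markov property under clique factorization (Lauritzen, Proposition 3.8) rather than as a direction of the Hammersley--Clifford theorem, whose name is normally reserved for the positivity-dependent converse; and if you wished to be fully self-contained you would add the short argument that, when $S$ separates $E$ from $F$ in $(\G_A)^m$, every clique lies either in $\tilde{E}\cup S$ or in its complement ($\tilde{E}$ being the union of components of the graph minus $S$ meeting $E$), so the density splits into two factors and the conditional independence follows. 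Neither point affects correctness.
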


\paragraph{Sufficient conditions for  DBNs representation}\label{subsec:suffConditions}
We recall here sufficient conditions  under  which the probability
distribution  $\proba$ of process $X$  admits a  BN  representation  according  to a  dynamic
network (e.g. in Figure \ref{fig:motifDBN}C). We   first   assume   that   the   observed   process   $X_t$   is   first-order
Markovian (Assumption~\ref{ass:m1}). That is, the expression level of a gene at a given time $t$ only
depends on the past through the gene expression levels observed at the previous time $t-1$. 
Then we  assume that the  variables observed simultaneously  are conditionally
independent given the past of the process (Assumption
\ref{ass:cindep}). In other words, we consider that time measurements are close enough
so  that   gene  expression level  $X^i_t$  measured at  time  $t$ is  better
explained by the previous time expression levels $X_{t-1}$ than by some current expression
level $X^j_t$. 

\begin{Ass}\label{ass:m1}
The stochastic process $X_t$ is first-order Markovian, that is, 

$$\forall t \geq 3, \ \ X_t \indep X_{1:t-2} \ \ \vert \ \ X_{t-1}.$$ 

\end{Ass}

\begin{Ass}\label{ass:cindep} 
For all $t \geq 1$, the random variables $\{X^i_t\}_{ i \in \p}$ are conditionally
independent given the past of the process $ X_{1:t-1}$, that is,

$$ \forall t \geq 1, \forall i \not= j,\ \ X^i_t \indep X^j_t \ \ \vert \ \ X_{1:t-1}.$$ 

\end{Ass}

Assumptions \ref{ass:m1} and \ref{ass:cindep} allow the existence of a DBN representation
of the distribution 
$\proba$ according to DAG $ \Gfull= ( X , \{(X^j_{t-1}, X^i_t)\}_{i,j \in \p, t>1} )$ 

which contains all the edges pointing out from a variable observed at some time $t-1$ towards a variable observed at the next time $t$ (See Lemma \ref{lem:Gfull} in Appendix \ref{sapp:Lemmas}). 
The direction of the edges according to time guarantees the acyclicity of $\Gfull$.

\subsection{Minimal DAG $\Gmin$}\label{ssec:gmin}
\paragraph{Existence and definition}
Among the  DAGs included in $\Gfull$,   we show that the probability distribution $\proba$ factorizes according to a minimal DAG,  which we denote by $\Gmin$ (See Lemma \ref{lem:interGraph}, Appendix \ref{sapp:Lemmas}).
The set of edges of~$\Gmin$ is exactly the set
 of  full order  conditional dependencies between successive variables given the  past of  the  process as
 set up  in  the   Proposition \ref{prop:Gmin} (See Proof in Appendix~\ref{sapp:proofProps}). 

\newpage
 
\begin{Prop}\label{prop:Gmin}\textbf{(Existence of minimal DAG $\Gmin$,
    the smallest  subgraph \newline of $\Gfull$ allowing DBN modelling)} 
    Let  $\pj \!= \! \p \bs \{j\}$ and $X^{\pj}_{t}\!=\!\{X^k_{t}; k \in\pj\}$ refer to the set $\pj$ of $p-1$
 variables observed at time $t$. Whenever  Assumptions \ref{ass:m1}
  and \ref{ass:cindep} are satisfied, the probability distribution $\proba$ admits a DBN
representation according to DAG $\Gmin$ whose edges describe exactly the
full order conditional dependencies between successive variables $X^j_{t-1}$
  and $X^i_t$ given the remaining  variables $X^{\pj}_{t-1} $ observed at time
  $t-1$,
  \vspace{-0.1cm}
$$ \Gmin = \left (X,\left \{ (X^j_{t-1}, X^i_t); \ X^i_t \not\indep X^j_{t-1} \vert
X^{\pj}_{t-1}\right \}_{i,j \in \p,t \in \n} \right )$$

\noindent Moreover, DAG $\Gmin$ is the smallest subgraph of $\Gfull$ according to which $\proba$ admits a DBN representation.
\end{Prop}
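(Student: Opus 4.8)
The plan is to build $\Gmin$ explicitly by defining, for each target vertex $X^i_t$ with $t>1$, its parent set to be exactly
$$\mathrm{pa}(X^i_t,\Gmin) := \{X^j_{t-1} : X^i_t \not\indep X^j_{t-1} \mid X^{\p_j}_{t-1}\},$$
and to leave all vertices $X^i_1$ without parents; acyclicity is then immediate since every edge points from time $t-1$ to time $t$. There are two things to establish: (i) $\proba$ admits a DBN representation according to this $\Gmin$, i.e.\ the density factorizes as $\prod_{i,t} f(X^i_t \mid \mathrm{pa}(X^i_t,\Gmin))$ in the sense of Proposition~\ref{Prop:BNrep}; and (ii) minimality, namely that any subgraph $\G \subseteq \Gfull$ according to which $\proba$ factorizes must contain every edge of $\Gmin$.

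For part (i), I would start from the factorization according to $\Gfull$ guaranteed by Lemma~\ref{lem:Gfull} (which uses Assumptions~\ref{ass:m1} and~\ref{ass:cindep}): $f(X) = \prod_i f(X^i_1)\cdot\prod_{t>1}\prod_i f(X^i_t \mid X_{t-1})$. The key reduction step is, for each fixed $i$ and $t>1$, to show
$$f(X^i_t \mid X_{t-1}) = f\bigl(X^i_t \mid \mathrm{pa}(X^i_t,\Gmin)\bigr),$$
which is exactly the statement that $X^i_t \indep \bigl(X_{t-1}\setminus \mathrm{pa}(X^i_t,\Gmin)\bigr) \mid \mathrm{pa}(X^i_t,\Gmin)$. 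This would follow from the pairwise conditions $X^i_t \indep X^j_{t-1}\mid X^{\p_j}_{t-1}$ for the non-parents $X^j_{t-1}$, together with an intersection-type argument (or a block-independence lemma) letting one pass from the pairwise statements to the joint statement about the whole complement; this is where the DAG-structured counterpart of the Lemmas in the appendix presumably does the work, so I would invoke Lemma~\ref{lem:interGraph} here rather than reprove it. Substituting back into the $\Gfull$-factorization gives the $\Gmin$-factorization, establishing the BN representation.

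For part (ii), suppose $\proba$ factorizes according to some DAG $\G$ with $E(\G)\subseteq E(\Gfull)$, so $\mathrm{pa}(X^i_t,\G)\subseteq X_{t-1}$ for each $t>1$. The global Markov property (Proposition~\ref{prop:gAnc}) applied to $\G$ yields $X^i_t \indep \bigl(X_{t-1}\setminus\mathrm{pa}(X^i_t,\G)\bigr)\mid \mathrm{pa}(X^i_t,\G)$; in particular, for any $X^j_{t-1}\notin\mathrm{pa}(X^i_t,\G)$ we get $X^i_t \indep X^j_{t-1}\mid \mathrm{pa}(X^i_t,\G)$. I would then argue that this forces $X^i_t\indep X^j_{t-1}\mid X^{\p_j}_{t-1}$ — i.e.\ that $X^j_{t-1}$ is not a $\Gmin$-parent of $X^i_t$ either — by enlarging the conditioning set from $\mathrm{pa}(X^i_t,\G)$ to $X^{\p_j}_{t-1}$. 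This enlargement step is the delicate point: it is again a (weak-union / contraction) consequence of the global Markov property applied to $\G$, using that all of $X^{\p_j}_{t-1}\setminus\mathrm{pa}(X^i_t,\G)$ is $d$-separated from $X^i_t$ by $\mathrm{pa}(X^i_t,\G)$. Contrapositively, every $\Gmin$-edge into $X^i_t$ must be a $\G$-edge, so $E(\Gmin)\subseteq E(\G)$, which is minimality.

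The main obstacle I anticipate is the combination step in part (i) — going from the collection of pairwise conditional independencies $\{X^i_t\indep X^j_{t-1}\mid X^{\p_j}_{t-1}\}_j$, each conditioning on a \emph{different} set $X^{\p_j}_{t-1}$, to the single joint statement $X^i_t\indep (X_{t-1}\setminus\mathrm{pa}(X^i_t,\Gmin))\mid \mathrm{pa}(X^i_t,\Gmin)$. The semi-graphoid axioms alone do not give this; one needs either the strictly positive density (so that the intersection property holds) or the structural fact that these independencies are read off a DAG, which is precisely what Lemma~\ref{lem:interGraph} should be providing. I would therefore lean on that lemma and keep the proof here to the clean substitution argument, relegating the measure-theoretic subtleties to the appendix.
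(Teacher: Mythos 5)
Your proposal is correct and follows essentially the same route as the paper: existence is obtained from the $\Gfull$-factorization (Lemma \ref{lem:Gfull}) by turning each pairwise independence $X^i_t \indep X^j_{t-1}\,\vert\, X^{\pj}_{t-1}$ into an edge deletion and combining them through the intersection Lemma \ref{lem:interGraph}, exactly as you propose, and your minimality argument (global Markov property on an arbitrary factorizing subgraph $\G$ plus enlargement of the conditioning set to $X^{\pj}_{t-1}$) is precisely the content of the paper's Lemma \ref{lem:ijIndep} applied to reach the same contradiction. You also correctly identify that the pairwise-to-joint step cannot come from the semi-graphoid axioms alone and must be delegated to the intersection-type Lemma \ref{lem:interGraph}, which is where the paper places that burden as well.
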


Thus in %the minimal 
DAG $\Gmin$, the set of parents $pa(X^i_t,\Gmin)$ of a
 variable $X^i_t$ is the smallest subset of $X_{t-1}$ such that the conditional
 densities satisfy $f(X^i_t \vert  pa(X^i_t,\Gmin) )= f(X^i_t \vert X_{t-1}) $. 
The set of parents of a variable can be seen as the only variables on which
this variable depends directly. 
So $\Gmin$ is the  DAG we want to  infer in order to recover
potential  regulation relationships  from gene  expression time  series.  From
Proposition  \ref{prop:Gmin}, any  pair of successive variables $(X^j_{t-1},X^i_t)$
which  are non  adjacent in  $\Gmin$ are  conditionally independent  given the
parents of $X^i_t$. In short, for all $i,j$ in $\p$, for all $t>1$, we have, 
 
 $$(X^j_{t-1},  X^i_t)  \notin E( \Gmin)  \ \ \ \   \Leftrightarrow \ \ \ \  X^i_t \indep X^j_{t-1}\ \vert \  pa(X^i_t,\Gmin) .$$
We
will make use of this result in  Section \ref{sec:Gq} in order to define low
order conditional dependence DAGs for the inference of $\Gmin$.

\paragraph{Minimal DAG $\Gmin$ for an AR(1) process}\label{ssec:ar1}
Consider the following first order auto-regressive model (AR(1)) with a diagonal error covariance
matrix $\Sigma$,

\vspace{-0.4cm}
\begin{eqnarray} 
&&X_1 \sim \mathcal{N}(\mu_1,\Sigma_1) \label{eq:AR1a0} \\
&& \forall t > 1, \ \ \ \ X_t =A X_{t-1} +B +\eps_t,\ \ \eps_t \sim \mathcal{N}(0, \Sigma),\label{eq:AR1a}\\
&& \forall s,t \in \n, \ Cov(\eps_t, \eps_s)= \delta_{ts} \Sigma,\label{eq:AR1b}\\
&& \forall s >t, \ \ \ \ \ Cov(X_t, \eps_s)=0. \label{eq:AR1c}
  \end{eqnarray}

\noindent where $A\!\!=\!\!(a_{ij})_{1\leq i \leq p,1\leq j \leq p}$ is a real matrix of size $ p \! \times \!p$,
$B\!\!=\!\!(b_i)_{1\leq   i   \leq   p}$   is   a   real column   vector,
$\Sigma \!=\!Diag \left (\sigma_{ii}^2 \right )_{1\leq i \leq p}$ is the diagonal error covariance
matrix of size  $p\times p$  and for all $s,\!t$ in $\n$, $\delta_{ts}\!\!=\!\! 1\mskip-5mu\relax\mathrm{l}_{\{s=t\}}$. \!\!\!\! 
\!\!\! Equation~(\ref{eq:AR1c}) implies that the coefficient matrices are
uniquely determined from the covariance function of~$X_t$.
%+ hyp identifiabilité modèle $f_{\theta}$ : $X_t$ free family, full rank. 

This modelling assumes homogeneity across time (constant matrix $A$) and linearity of
the dependency relationships. 
From (\ref{eq:AR1a})  and (\ref{eq:AR1c}), the model is  first order Markovian
(Assumption  \ref{ass:m1}).
From (\ref{eq:AR1b}), Assumption \ref{ass:cindep} is
satisfied whenever the error covariance matrix $\Sigma$ is diagonal.
Thus from Proposition  \ref{prop:Gmin}, the probability distribution  of the AR(1)
process defined by equations (\ref{eq:AR1a0}-\ref{eq:AR1c}) factorizes  according to the  minimal DAG $\Gmin_{AR(1)}$  whose edges
correspond to the non-zero coefficients of matrix $A$.  Indeed, if matrix $\Sigma$ is diagonal,
each element $a_{ij}$  is the regression coefficient of  the variable $X^i_t$
on $X^j_{t-1}$ given $X^{\pj}_{t-1}$, that is 
$$a_{ij}=
Cov(X^i_t,X^j_{t-1} \ \vert \ X^{\pj}_{t-1} ) / Var(X^j_{t-1} \ \vert \ 
  X^{\pj}_{t-1}).$$

\noindent As process X is Gaussian, the set of null coefficients of matrix $A$ exactly describes the conditional
independencies between successive variables, thus \ \ if $\Sigma$ is diagonal, we have,
$$a_{ij}=0\ \ \ \Leftrightarrow \ \ \ \forall t >1, \ \ X^i_t \indep X^j_{t-1} \vert
X^{\pj}_{t-1}. $$

\noindent Finally, DAG $\Gmin_{AR(1)} $ has an edge between two successive
variables  $X^j_{t-1}$ and  $X^i_t$,  for all  $t>1$,  whenever the  coefficient
$a_{ij}$ of the matrix $A$ differs from zero,
\begin{equation}\label{eq:GminAR}
\Gmin_{AR(1)}:=\left  (X, \left  \{(X^j_{t-1}, X^i_t)  \text{ such  that  } a_{ij}\not=
0; \ t>1,\ i,j \in \p \right \} \right ).
\end{equation}

\noindent As an illustration, any AR(1) process whose matrix $\Sigma$ is diagonal and
 matrix~$A$ has the following form,
$$ A = \left (
\begin{array}{ccc}
a_{11} & a_{12}& 0\\
a_{21}& 0&0\\
0 & a_{32}& 0
\end{array}
\right ),$$
admits a
BN representation according to the dynamic network of
Fig.\ref{fig:motifDBN}C~($p\!=~$\!\!\!$~3$).

\section{Introducing $q^{th}$ order dependence DAGs $\gq$ for~DBNs}\label{sec:Gq}

In this paper, we propose to use the DBN modelling according to DAG $\Gmin$ (introduced in  Proposition \ref{prop:Gmin}) to model genetic regulatory networks from gene expression time series.
Reverse discovery of  DAG $\Gmin$ requires to determine, for each variable $X^i_t$, the set of variables
$X^j_{t-1}$ observed at time $t-1$ on which variable $X^i_t$ is conditionally dependent  given the remaining variables~$X^{\pj}_{t-1}$. 
However, even under the time homogeneity assumption discussed in the introduction, %Section~\ref{sec:intro},
standard estimation methods do not allow us 
 to infer the parameters of a regression model  for $p$ genes (\ie$p^2$ possible edges) from $np$ measurements.
 We still have
to face the `curse of dimensionality' since the number of genes $p$, is much higher than the number
of measurements~$n$. 

In order to reduce the dimension, we approximate DAG $\Gmin$ by   $q^{th}$ order conditional dependence DAGs  $\gq$  ($q < p$). To such an end, we extend to DBNs the approach based on the consideration of low order independencies introduced by Wille et al.\cite{Wille04,Wille06} for GGM approximation (See more details on low order independence graphs for GGMs in Section \ref{sec:intro}). After defining $q^{th}$ order conditional dependence DAGs $\gq$   for DBNs, we investigate the manner  in which they allow us to approximate  the DAG $\Gmin$ describing full order conditional dependencies.

\subsection{ DAG $\gq$ definition}

Let $q$ be smaller than $p$. In the  $q^{th}$ order dependence DAG $\gq$, 
whenever there exists a subset $X^Q_{t-1} $ of $q$ variables among the set of $p-1$ variables
$X^{\pj}_{t-1}$   such  that   $X^j_{t-1}$  and   $X^i_t$   are  conditionally
independent given $X^Q_{t-1} $, no edge is drawn between the two successive variables $X^j_{t-1}$ and $X^i_t$.
In short, DAGs $\gq$ are defined as follows,

\begin{Def}\label{def:gq} \textbf{$q^{th}$-order conditional dependence DAG $\gq$}
$$ \forall q \!<\! p, \ \gq \!\!=\!\! \left ( \!\! X,\left \{ \! (X^j_{t-1}, X^i_t); \ \forall Q \subseteq \pj,
\vert Q \vert =q, X^i_t \not\indep X^j_{t-1} \vert
X^Q_{t-1}\right \}_{i,j \in \p,t \in \n} \!\!  \right )\!\!.$$
\end{Def}

 DAGs $\gq$ offer a way of
producing dependence relationships between the
variables,  but they are no longer associated with a BN representation
which would call  for more global relationships.
Note that the definition of $q^{th}$ order partial dependence DAG $\gq$ is based on exact  $q^{th}$ order independencies (not on all partial independencies lower than $q$ as in  the partial order correlation network used by  Wille and
    B\"uhlmann \cite{Wille06}). 
  Indeed, we consider that including only  the $q^{th}$ order dependencies  better reflects the true DAG $\Gmin$. In particular, for $p$ variables, DAG $ \mathcal{G}^{(p-1)}$ is DAG $\Gmin$. This definition is possible for DBNs  because  dynamic modelling essentially differs from static correlation network modelling\footnote{In particular, contrary to the case of correlation network,  the `` V '' structures (or structures with multiple parents)  do not generate spurious edges in the case of DBN since the definition of the DAG $\Gmin$ defining full order dependencies does not allow edges between variables observed at the same time. Thus, for instance,  when considering  the following  ``~V~'' structure $X^j_{t-1} \rightarrow X^i_t \leftarrow X^k_{t-1}$,  no spurious edge can be inferred between the variables  $X^j_{t-1}$ and $X^k_{t-1}$.}.

In general,
DAGs $\gq$ differ from DAG $\Gmin$. For instance, the approximation
of the DAG of Figure \ref{fig:motifDBN}C by the $1^{st}$ order
conditional dependence DAG  may give rise to the spurious edge $X^3_{t} \rightarrow
X^1_{t+1}$, for all $t<n$ (See Figure \ref{fig:Gq}). 
Indeed, $X^1_{t}$ (resp. $X^2_{t}$) does not  separate $X^1_{t+1}$ from $X^3_{t}$ in the smallest moral graph containing the variables $X^1_{t+1} \cup X^3_{t} \cup X^1_{t}$  (resp. $X^1_{t+1} \cup X^3_{t} \cup X^2_{t}$)
 displayed in Figure \ref{fig:moralG}B.
Nevertheless, if the  vertices of
$\Gmin$ have few parents, %qth$ order conditional dependence 
DAGs $\gq$ bring relevant information about the  topology of $\Gmin$,  even for  small values of  $q$. 
In the following, we give characterizations of low order conditional
dependence DAGs $\gq$  and analyze the accuracy of the approximations they offer.

\begin{figure}
\begin{center}
 \includegraphics[width=9cm]{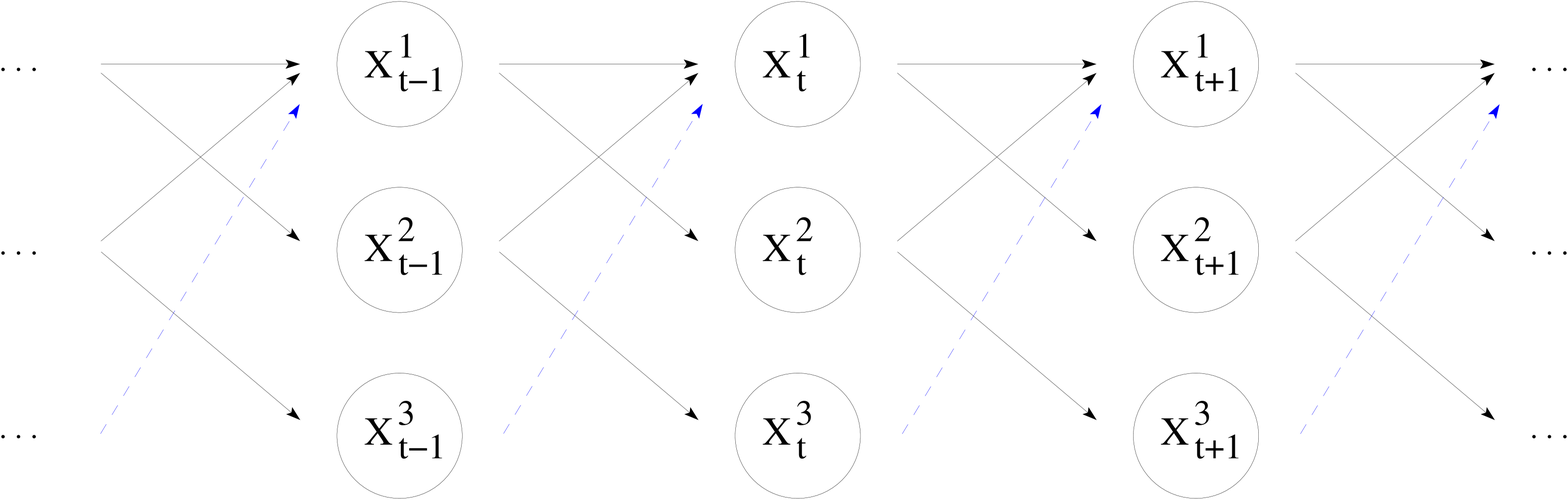} 
\end{center}
\vspace{-0.5cm}
\caption{First-order conditional dependence DAG $\gun$ (obtained from the DAG in
Figure~\ref{fig:motifDBN}C). The spurious dashed arrow may appear in $\gun$. }
\label{fig:Gq}
\end{figure}

\vspace{-0.2cm}

\subsection{A restricted number of parents}

In some known gene regulation mechanisms, it is the case that a few genes regulate many other
genes (e.g. the single input modules in the transcriptional regulatory network of
\textit{S. Cerevisiae} \cite{Lee02}). However, we do not expect a single gene to be regulated by many genes at the same time. So the number of parents in gene
interaction networks is expected to  be relatively small.  In this section, we
analyze the  properties of  $\gq$ when  the number of  parents in  $\Gmin$ is
lower than $q$. 

Let us denote by $\NpaxitGmin$ the number of parents of $X^i_t$ in
DAG  $\Gmin$  and  $ \NpaMaxGmin$  the  maximal  number  of parents  of  any
variable $X^i_t$ in $\Gmin$,
\vspace{-0.6cm}
 
$$ \NpaxitGmin=\left \vert pa(X^i_t, \Gmin) \right \vert, \hspace{0.6cm}
 \NpaMaxGmin =  \mathop{\text{Max}}_{i \in \p, t \in \n} \left (\NpaxitGmin \right ). $$

\vspace{-0.2cm}

\noindent The next  results hold when the number  of parents in $\Gmin$
is restricted. 

\vspace{-0.1cm}

\begin{Prop}\label{prop:infq}
If $\NpaxitGmin \leq q$ then we have,
\vspace{-0.2cm}

$$\left \{ (X^j_{t-1},X^i_t)
\notin E(\Gmin)\right \}\ \Rightarrow \ \left \{ (X^j_{t-1},X^i_t) \notin
E(\G^q)\right \}.$$
\vspace{-0.2cm}

\end{Prop}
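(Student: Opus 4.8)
The plan is to argue by contraposition at the level of the edge definitions, using the directed global Markov property (Proposition \ref{prop:gAnc}) applied to $\Gmin$. Suppose $(X^j_{t-1},X^i_t) \notin E(\Gmin)$. By the characterization of non-adjacency in $\Gmin$ recalled just after Proposition \ref{prop:Gmin}, this is equivalent to $X^i_t \indep X^j_{t-1} \mid pa(X^i_t,\Gmin)$. Now set $Q_0 := pa(X^i_t,\Gmin)$ viewed as a subset of indices in $\pj$ — note $X^j_{t-1} \notin pa(X^i_t,\Gmin)$ precisely because there is no edge, so $Q_0 \subseteq \pj$ and $X^{Q_0}_{t-1} = pa(X^i_t,\Gmin)$. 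We then have $|Q_0| = \NpaxitGmin \leq q$ by hypothesis. The only gap between this and the definition of $\gq$ (Definition \ref{def:gq}) is that $\gq$ tests conditioning sets of size \emph{exactly} $q$, whereas $|Q_0|$ may be strictly smaller than $q$; so the first real step is to inflate $Q_0$ to a set of size exactly $q$ while preserving the conditional independence.

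For the inflation step I would take any $Q \subseteq \pj$ with $Q_0 \subseteq Q$ and $|Q| = q$ (possible since $|Q_0| \leq q \leq p-1 = |\pj|$), and show that $X^i_t \indep X^j_{t-1} \mid X^Q_{t-1}$ still holds. The cleanest way is again via Proposition \ref{prop:gAnc} applied to the DBN representation of $\proba$ according to $\Gmin$ (which exists by Proposition \ref{prop:Gmin} under Assumptions \ref{ass:m1} and \ref{ass:cindep}): one checks that in the moral graph of the smallest ancestral set containing $\{X^i_t\} \cup \{X^j_{t-1}\} \cup X^Q_{t-1}$, the set $X^Q_{t-1}$ still separates $X^i_t$ from $X^j_{t-1}$. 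Since every parent of $X^i_t$ already lies in $Q$, any path into $X^i_t$ must enter through a vertex of $X^Q_{t-1}$, and the extra vertices added to $Q_0$ are themselves vertices observed at time $t-1$ (hence, by the structure of $\Gmin \subseteq \Gfull$, they have no children among $\{X^j_{t-1}\}\cup X^Q_{t-1}$ relevant to creating a new connecting path) — so no new active path between $X^i_t$ and $X^j_{t-1}$ is opened. Hence $X^i_t \indep X^j_{t-1} \mid X^Q_{t-1}$.

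Having produced one subset $Q \subseteq \pj$ of size exactly $q$ with $X^i_t \indep X^j_{t-1} \mid X^Q_{t-1}$, the definition of $\gq$ — which excludes the edge $(X^j_{t-1},X^i_t)$ as soon as \emph{some} such $Q$ witnesses conditional independence — immediately gives $(X^j_{t-1},X^i_t) \notin E(\gq)$, which is the desired conclusion. I expect the main obstacle to be the inflation step: one must verify carefully that enlarging the conditioning set by vertices at time $t-1$ cannot destroy the conditional independence, i.e. cannot "unblock" a path in the relevant moral graph. This is intuitively clear because in $\Gmin$ all edges go from time $t-1$ to time $t$, so the added vertices are sinks relative to the time-$(t-1)$ layer and siblings (through common children at time $t$) only of other time-$(t-1)$ vertices; conditioning on a non-collider that is a parent of $X^i_t$ keeps the path through $X^i_t$'s parent set blocked exactly as before. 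Making this "adding parents keeps the independence" argument rigorous — ideally by a direct appeal to a standard graphoid/semi-graphoid property or to the explicit ancestral-moral-graph criterion — is where the care is needed; everything else is bookkeeping with the definitions.
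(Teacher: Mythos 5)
Your proposal is correct and follows essentially the paper's own route: the ancestral-set moral-graph separation (Proposition \ref{prop:gAnc}) gives $X^i_t \indep X^j_{t-1} \mid pa(X^i_t,\Gmin)$, and the conclusion follows from the definition of $\gq$ since $\NpaxitGmin \leq q$. The inflation step you worry about is exactly what the paper's Lemma \ref{lem:ijIndep} supplies (independence given $pa(X^i_t,\G) \cup S$ for any subset $S$ of past variables), and your justification of it — the only neighbours of $X^i_t$ in the relevant moral graph are its parents, all contained in $X^Q_{t-1}$ — is the same argument, just stated more explicitly than in the paper's main text.
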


\vspace{-1cm}

\begin{Cor}\label{cor:GqInGmin}
 For all $ q \geq \NpaMaxGmin$, we have $ \Gmin \supseteq \gq.$
\end{Cor}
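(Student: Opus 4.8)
The plan is to prove Proposition \ref{prop:infq} first, since the Corollary is an immediate consequence. The key idea is that when $X^i_t$ has at most $q$ parents in $\Gmin$, the parent set $pa(X^i_t,\Gmin)$ itself can be padded out to a set of exactly $q$ variables drawn from $X^{\pj}_{t-1}$, and conditioning on this particular set witnesses the $q^{th}$ order independence required to drop the edge in $\gq$.

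First I would fix $i,j \in \p$ and $t>1$ and assume $(X^j_{t-1},X^i_t) \notin E(\Gmin)$. By the characterization of $\Gmin$ recalled just after Proposition \ref{prop:Gmin}, this is equivalent to $X^i_t \indep X^j_{t-1} \mid pa(X^i_t,\Gmin)$. Note in particular that $X^j_{t-1} \notin pa(X^i_t,\Gmin)$, so $pa(X^i_t,\Gmin) \subseteq X^{\pj}_{t-1}$. Writing $r = \NpaxitGmin \leq q$, I would then choose any superset $X^Q_{t-1}$ with $pa(X^i_t,\Gmin) \subseteq X^Q_{t-1} \subseteq X^{\pj}_{t-1}$ and $|Q| = q$ (this is possible precisely because $|\pj| = p-1 \geq q$, using $q<p$, and $r \leq q$). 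The task is then to upgrade the independence given $pa(X^i_t,\Gmin)$ to independence given the larger conditioning set $X^Q_{t-1}$.

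The main obstacle is exactly this upgrading step: conditional independence is not in general preserved under enlarging the conditioning set. Here it works because $pa(X^i_t,\Gmin)$ is not an arbitrary set but the full parent set, which by Proposition \ref{prop:Gmin} satisfies $f(X^i_t \mid pa(X^i_t,\Gmin)) = f(X^i_t \mid X_{t-1})$; equivalently $X^i_t \indep X_{t-1}^{\p \setminus \{i\}\text{-complement of parents}} \mid pa(X^i_t,\Gmin)$ in the appropriate sense. I would make this rigorous via the directed global Markov property (Proposition \ref{prop:gAnc}) applied to $\Gmin$, or more directly: since $pa(X^i_t,\Gmin)$ d-separates $X^i_t$ from all other variables in $X_{t-1}$, for any set $X^Q_{t-1} \subseteq X_{t-1} \setminus \{X^i_t\}$ containing $pa(X^i_t,\Gmin)$ we have $X^i_t \indep (X^Q_{t-1} \setminus pa(X^i_t,\Gmin)) \mid pa(X^i_t,\Gmin)$, and combining this with $X^i_t \indep X^j_{t-1}\mid pa(X^i_t,\Gmin)$ via the standard contraction/weak-union properties of conditional independence yields $X^i_t \indep X^j_{t-1} \mid X^Q_{t-1}$ (taking care that $X^j_{t-1}$ is handled whether or not $j$ lies in the complement; when $j$ is among the padded indices the statement $X^i_t \indep X^j_{t-1}\mid X^Q_{t-1}$ is still what we want). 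Since $|Q|=q$ and $Q \subseteq \pj$, this exhibits a witnessing set, so by Definition \ref{def:gq} the edge $(X^j_{t-1},X^i_t)$ is absent from $\gq$, proving Proposition \ref{prop:infq}.

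Finally, for Corollary \ref{cor:GqInGmin}: if $q \geq \NpaMaxGmin$ then $\NpaxitGmin \leq q$ for every $i \in \p$, $t \in \n$, so Proposition \ref{prop:infq} applies to every pair of successive variables. Contrapositively, every edge of $\gq$ is an edge of $\Gmin$, i.e. $E(\gq) \subseteq E(\Gmin)$, hence $\Gmin \supseteq \gq$. I expect the corollary to be one line given the proposition; essentially all the work is in the conditional-independence upgrading argument described above.
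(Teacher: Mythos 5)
Your proof is correct, and its overall skeleton matches the paper's: everything is reduced to Proposition~\ref{prop:infq}, the corollary itself being the same one-line contrapositive. The difference lies in how you justify dropping the edge in $\gq$. The paper argues graphically: since $(X^j_{t-1},X^i_t)\notin E(\Gmin)$, the parent set $\text{pa}(X^i_t,\Gmin)$ separates $X^i_t$ from $X^j_{t-1}$ in the moral graph of the smallest ancestral set, so Proposition~\ref{prop:gAnc} gives $X^i_t \indep X^j_{t-1}\,\vert\, \text{pa}(X^i_t,\Gmin)$, and the enlargement of the conditioning set to past variables is covered by Lemma~\ref{lem:ijIndep}. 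You instead start from the parental Markov property $f(X^i_t\,\vert\, \text{pa}(X^i_t,\Gmin))=f(X^i_t\,\vert\,X_{t-1})$, pad the parents to a set $Q\subseteq \pj$ with $\vert Q\vert=q$ (possible since $q<p$), and upgrade the independence by the semi-graphoid axioms (weak union/decomposition), which hold for any distribution with a density; this avoids moral graphs and ancestral sets altogether. Your version has the merit of making explicit the padding to a set of size \emph{exactly} $q$, a point Definition~\ref{def:gq} requires but which the paper's main-text argument glosses over (it is only really covered by Lemma~\ref{lem:ijIndep}); one small quibble is your parenthetical about ``$j$ among the padded indices'', which cannot occur since $Q\subseteq\pj$ by construction, so that caveat is unnecessary rather than wrong.
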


\vspace{-0.6cm}

\begin{Prop}\label{prop:G1}  Let  $X$  be  a  Gaussian
  process.\! If $\NpaMaxGmin \leq 1$ then $\Gmin\!= \!\gun$.  
\end{Prop}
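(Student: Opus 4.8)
The plan is to prove the two inclusions $\Gmin\supseteq\gun$ and $\Gmin\subseteq\gun$ separately. The first is immediate and uses none of the Gaussian structure: since $\NpaxitGmin\leq\NpaMaxGmin\leq 1$ for every vertex $X^i_t$, Proposition \ref{prop:infq} applied with $q=1$ (equivalently, Corollary \ref{cor:GqInGmin} with $q=1$) gives $\gun\subseteq\Gmin$. It remains to establish $\Gmin\subseteq\gun$, i.e.\ that every edge of $\Gmin$ survives in $\gun$, and this is the direction for which the Gaussian hypothesis is \emph{essential}.

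To that end, fix $t>1$ and $i,j\in\p$ with $(X^j_{t-1},X^i_t)\in E(\Gmin)$. Since $X^i_t$ has at most one parent, necessarily $pa(X^i_t,\Gmin)=\{X^j_{t-1}\}$; by the description of parents following Proposition \ref{prop:Gmin}, this forces $X^i_t\indep X^{\pj}_{t-1}\,\vert\,X^j_{t-1}$, which I abbreviate $(\star)$. I must show $(X^j_{t-1},X^i_t)\in E(\gun)$, i.e.\ that $X^i_t\not\indep X^j_{t-1}\,\vert\,X^k_{t-1}$ for every $k\neq j$. Suppose not: say $X^i_t\indep X^j_{t-1}\,\vert\,X^k_{t-1}$ for some $k\neq j$. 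From $(\star)$ and the decomposition property, $X^i_t\indep X^k_{t-1}\,\vert\,X^j_{t-1}$. Since $X$ is a Gaussian process admitting a density with respect to Lebesgue measure (so that the covariance matrices at hand are non-singular), conditional independence satisfies the intersection property \cite{Lauritzen96}; applied to the last two statements it yields $X^i_t\indep(X^j_{t-1},X^k_{t-1})$, hence in particular $X^i_t\indep X^j_{t-1}$. Feeding this back into $(\star)$ through the contraction property gives $X^i_t\indep X_{t-1}$, and therefore $X^i_t\indep X^j_{t-1}\,\vert\,X^{\pj}_{t-1}$. By Proposition \ref{prop:Gmin} this means $(X^j_{t-1},X^i_t)\notin E(\Gmin)$, contradicting our choice of edge. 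Hence $\Gmin\subseteq\gun$, and combining with the first inclusion, $\Gmin=\gun$.

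I expect the only genuine obstacle to be the step that promotes the two single-variable independencies $X^i_t\indep X^j_{t-1}\,\vert\,X^k_{t-1}$ and $X^i_t\indep X^k_{t-1}\,\vert\,X^j_{t-1}$ to the joint statement $X^i_t\indep(X^j_{t-1},X^k_{t-1})$: this cannot be done with the semi-graphoid axioms alone and relies on the intersection property, valid for non-degenerate Gaussian laws. Conceptually, this is exactly where a faithfulness-type phenomenon enters — for a non-Gaussian distribution an edge of $\Gmin$ could be erased in $\gun$ by an ``unfaithful'' cancellation even though every vertex has a single parent — and Gaussianity is what excludes it. If one prefers to avoid invoking the intersection property abstractly, the same contradiction follows from a short partial-correlation computation: $(\star)$ forces $\rho(X^i_t,X^m_{t-1})=\rho(X^i_t,X^j_{t-1})\,\rho(X^j_{t-1},X^m_{t-1})$ for all $m\neq j$, the assumed first-order independence forces $\rho(X^i_t,X^j_{t-1})=\rho(X^i_t,X^k_{t-1})\,\rho(X^k_{t-1},X^j_{t-1})$, and substituting the former with $m=k$ into the latter yields $\rho(X^i_t,X^j_{t-1})\bigl(1-\rho(X^j_{t-1},X^k_{t-1})^{2}\bigr)=0$, whence $\rho(X^i_t,X^j_{t-1})=0$; then every $\rho(X^i_t,X^m_{t-1})$ vanishes, $X^i_t$ is independent of $X_{t-1}$, and one concludes as above.
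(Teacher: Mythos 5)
Your proof is correct, but it follows a genuinely different route from the paper for the inclusion $\Gmin \subseteq \gun$ (the inclusion $\gun \subseteq \Gmin$ via Corollary \ref{cor:GqInGmin} is identical). The paper argues by an explicit conditional-covariance recursion: assuming $X^i_t \indep X^j_{t-1} \vert X^k_{t-1}$, it expands $Cov(X^i_t,X^j_{t-1}\vert X^k_{t-1},X^l_{t-1})$, uses the single-parent hypothesis and the directed global Markov property (Proposition \ref{prop:gAnc}) to kill each new term, and then proceeds by induction over growing conditioning sets until it reaches $Cov(X^i_t,X^j_{t-1}\vert X^{\pj}_{t-1})=0$, contradicting Proposition \ref{prop:Gmin}. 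You instead extract from the single-parent hypothesis the joint statement $X^i_t \indep X^{\pj}_{t-1}\,\vert\,X^j_{t-1}$ (which is exactly the Markov fact $f(X^i_t\vert X_{t-1})=f(X^i_t\vert \text{pa}(X^i_t,\Gmin))$ noted after Proposition \ref{prop:Gmin}) and then combine it with the assumed first-order independence purely through graphoid axioms: decomposition, intersection, contraction and weak union, with Gaussianity (plus the standing density assumption, ensuring non-singular covariances) invoked only to license the intersection property. This buys two things: it eliminates the induction altogether, and it makes transparent that the result holds for any distribution satisfying intersection (e.g.\ any strictly positive density), Gaussianity being merely a sufficient condition; your fallback partial-correlation identity $\rho_{ij}(1-\rho_{jk}^2)=0$ is essentially a compressed, marginal-correlation version of the paper's covariance computation. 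The paper's approach, by contrast, stays entirely within elementary covariance algebra and does not need to invoke the intersection axiom explicitly. One small point worth making explicit in your correlation variant: $\rho(X^j_{t-1},X^k_{t-1})^2<1$ is needed to conclude, and it follows from the non-singularity guaranteed by the existence of a density.
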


Consider a variable $X^i_t$ having at most $q$ parents in $\Gmin$ ($q <
p$). Let $X^j_{t-1}$ be a variable observed at the previous time $t-1$ and having
no edge pointing towards $X^i_t$ in $\Gmin$. In the moral  graph of the smallest ancestral set containing $X^i_t
\cup X^j_{t-1} \cup \  \text{pa}(X^i_t, \Gmin)$, the set of  parents $\text{pa}(X^i_t, \Gmin)$
separates $X^i_t$ from $X^j_{t-1}$.  From Proposition
\ref{prop:gAnc}, we have $X^i_t\indep~X^j_{t-1} \ \vert \ \text{pa}(X^i_t, \Gmin) $. 
The number of parents $\text{pa}(X^i_t,
\Gmin)$ is smaller than $q$, so the edge $X^j_{t-1} \rightarrow X^i_t$ is not in
$\Gq$. This establishes Proposition \ref{prop:infq}. 
Consequently,  if the  maximal  number of  parents  in $\Gmin$  is lower  than
$q$, then $\gq$ is included in $\Gmin$ (Corollary \ref{cor:GqInGmin}).  In this case, $\gq$ does not contain spurious edges.

The converse  inclusion relationship  is not  true in  general\footnote{As an illustration, let
$X^j_{t-1}  \!\rightarrow\!  X^i_t$  be  an  edge of  $\Gmin$  then in essence (See Prop \ref{prop:Gmin}) $X^i_t$  and
$X^j_{t-1}$  are   conditionally  dependent  given   the  remaining  variables
$X^{\pj}_{t-1}$.  There may  however exist a subset of  $q$~variables $X^Q_{t-1}$,
where $Q$  is a subset of  $\p \bs \{j\}$ of  size $q$, such  that $X^i_t$ and
$X^j_{t-1}$  are  conditionally  independent   with  respect  to  this  subset
$X^Q_{t-1}$. Indeed, even  though the topology of $\Gmin$  allows us to establish
some  conditional independencies,  DAG  $\Gmin$ does  not  necessarily allow  to
derive  all of  them.   Two variables can be
conditionally independent given a subset of variables whereas this subset does
not separate these  two variables in~$\Gmin$.}.  Nevertheless,  if each variable
has at most \textit{one} parent, the converse inclusion $\Gmin \subseteq \gun$
is true if  the process is Gaussian and  $q=1$ (Proposition \ref{prop:G1}, see
proof in Appendix \ref{sapp:proofProps}).  
 At  a higher  order,  we need  to  assume that  all
conditional independencies  can be derived  from $\Gmin$, that is  $\proba$ is
\textit{faithful} to $\Gmin$.

\subsection{Faithfulness} 
\begin{Def}\label{Def:faithful}\textbf{(faithfulness,                   Spirtes
    \cite{Spirtes93})} A  distribution $\proba$ is \textbf{faithful}  to a DAG
  $\G$  if  all and only the independence  relationships true in $\proba$
  are  entailed   by  $\G$  (as   set  up  in   Proposition~\ref{prop:gAnc}). 
\end{Def} 

\begin{Thm}\label{thm:MesZero}\textbf{(Measure zero for unfaithful Gaussian (Spirtes \cite{Spirtes93}) and discrete (Meek
    \cite{Meek95})   distributions)}   Let   $\pi^{\mathcal{N}}_{\G}$   (resp. 
  $\pi^{\mathcal{D}}_{\G}$)  be  the set  of  linearly independent  parameters
  needed to  parameterize a  multivariate normal distribution  (resp.  discrete
  distribution)  $\proba$ which  admits  a factorization  according  to a  DAG
  $\G$. 
  The set of  distributions which are unfaithful to $\G$  has measure zero with
  respect  to  Lebesgue  measure  over $\pi^{\mathcal{N}}_{\G}$  (resp.   over
  $\pi^{\mathcal{D}}_{\G}$).
\end{Thm}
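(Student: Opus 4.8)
The plan is to reduce the assertion to the elementary fact that a not-identically-zero real-analytic (here even polynomial) function on a connected open parameter domain has a zero set of Lebesgue measure zero, combined with a finite-union argument.

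First I would note that the vertex set $X$ is finite, so there are only finitely many triples $(E,F,S)$ of pairwise disjoint subsets of $X$. By Definition \ref{Def:faithful}, a distribution $\proba$ that factorizes according to $\G$ is unfaithful to $\G$ exactly when, for at least one such triple, $E \indep F \mid S$ holds in $\proba$ although $S$ does not separate $E$ from $F$ in $\G$ in the sense of Proposition \ref{prop:gAnc}. Let $\mathcal{T}$ be the (finite) set of such ``non-separated'' triples. Then the set of unfaithful distributions corresponds to $\bigcup_{(E,F,S)\in\mathcal{T}} U_{E,F,S}$, where $U_{E,F,S}$ is the subset of the parameter space ($\pi^{\mathcal{N}}_{\G}$ or $\pi^{\mathcal{D}}_{\G}$, according to the case) on which $E \indep F \mid S$ holds. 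A finite union of null sets is null, so it is enough to prove that each $U_{E,F,S}$ has measure zero.

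Next I would realise each $U_{E,F,S}$ as an algebraic subset of the parameter space. In the Gaussian case the parameters are the edge regression coefficients together with the positive conditional variances; the covariance matrix $\Sigma$ of $X$ is recovered from them by the standard recursion, and since the coefficient matrix is unitriangular in a topological ordering of $\G$, every entry of $\Sigma$ is a polynomial in the parameters. Jointly Gaussian blocks satisfy $E \indep F \mid S$ if and only if every $(|S|+1)$-minor of the submatrix $\Sigma_{E\cup S,\,F\cup S}$ vanishes, and these minors are polynomials $h_1,\dots,h_r$ in the parameters; hence $U_{E,F,S}=\{h_1=\dots=h_r=0\}$. In the discrete case the parameters are the free conditional probability table entries, each joint probability $\proba(X=x)$ is a polynomial (multilinear) function of them, and $E \indep F \mid S$ is equivalent to the polynomial system $\proba(E=e,F=f,S=s)\,\proba(S=s)=\proba(E=e,S=s)\,\proba(F=f,S=s)$ over all configurations; again $U_{E,F,S}$ is the common zero set of finitely many polynomials.

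The decisive step is to show that each of these zero sets is \emph{proper}, i.e.\ that the $h_k$ do not all vanish identically on the (connected) parameter domain --- equivalently, that there is at least one parameter value for which $E \not\indep F \mid S$. This is precisely where the hypothesis that $S$ does not separate $E$ from $F$ in $\G$ is used: the failure of separation yields a connecting path between $E$ and $F$, and one then chooses the parameters ``generically'' (for instance all regression coefficients and conditional variances equal to $1$ in the Gaussian case, or a suitable nearly deterministic assignment of the table entries in the discrete case) so that the dependence transmitted along this path is not cancelled by the contributions of other paths. I expect this properness statement to be the main obstacle; I would either carry out the explicit path/trek computation or invoke the corresponding existence result (a single dependent member of the family), established by Spirtes et al.\ and by Meek. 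Granting properness, each $U_{E,F,S}$ is the zero set of a not-identically-zero polynomial on a connected open parameter domain and therefore has Lebesgue measure zero; taking the finite union over $\mathcal{T}$ proves the theorem in both the Gaussian and the discrete settings.
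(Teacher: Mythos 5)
You should first be aware that the paper does not prove this theorem at all: it is quoted with attribution, and the proof lives in Spirtes et al.\ (for the Gaussian case) and Meek (for the discrete case). So there is no in-paper argument to compare against; what you have reconstructed is, in outline, exactly the argument of those references: finitely many ``non-entailed'' triples $(E,F,S)$, each conditional-independence statement realized as the common zero set of finitely many polynomials in the parameters (minors of the covariance matrix obtained from $\Sigma=(I-A)^{-1}D(I-A)^{-T}$ in the Gaussian case, multilinear probability identities in the discrete case), properness of each zero set, and the fact that a non-trivial polynomial vanishes on a Lebesgue-null set, closed off by a finite union. That skeleton is correct, and your reduction of $E\indep F\mid S$ to vanishing minors (given positive conditional variances, hence a positive definite joint covariance) is fine.

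The genuine weak point is the step you yourself flag as the obstacle, and your proposed shortcut for it does not work. Choosing \emph{all} regression coefficients and conditional variances equal to $1$ can itself land on an unfaithful point: take the DAG $X\rightarrow Y$, $X\rightarrow Z$, $Y\rightarrow Z$ with unit coefficients and unit noise variances; then $Cov(X,Y\vert Z)=1-\tfrac{2\cdot 3}{6}=0$ although $Z$ does not separate $X$ from $Y$ (there is a direct edge). So ``generic-looking'' constant parameters can be cancelled by collider conditioning, and the properness claim cannot be waved through this way. The properness statement is precisely the completeness of d-separation (Geiger--Pearl for the Gaussian case; Geiger--Verma--Pearl and Meek for the discrete case, where it is the hard part of the theorem): given a non-separated triple one must either cite that result or carry out the standard construction, namely select a single active path, set the coefficients (resp.\ make the conditional tables degenerate) on edges off that path to zero, and verify that the dependence survives the conditioning on $S$, including the descendants-of-colliders bookkeeping. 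With that step either cited or proved, your finite-union-of-null-algebraic-sets argument does give the theorem; without it, the proof is incomplete at its only substantive point.
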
 

From Definition \ref{Def:faithful}, whenever the distribution  $\proba$ is faithful  to $\Gmin$, any  subset $X^Q_{t-1}
\subseteq  X_{t-1}$,  with  respect  to  which  $X^i_t$  and  $X^j_{t-1}$  are
conditionally  independent, separates  $X^i_t$  and $X^j_{t-1}$  in the  moral
graph  of the  smallest ancestral  set containing  $X^i_t \cup  X^j_{t-1} \cup
X^Q_{t-1}$.  Under  this assumption, we  can derive interesting  properties on
$\Gmin$ from the topology of low  order dependence DAGs $\gq$.  As there is no
way  to assess  a  probability distribution  to  be faithful  to  a DAG,  this
assumption    has    often    been    criticized.     However,    Theorem
\ref{thm:MesZero},  established  by   Spirtes  \cite{Spirtes93}  for  the Gaussian
distribution  and extended  to  discrete distributions  by Meek  \cite{Meek95},
makes this assumption reasonable at least in a measure-theoretic sense.  
Moreover this assumption remains  very reasonable in a modelling framework where the network to be  inferred describes actual interaction relationships.
The next propositions are derived from the faithfulness of the distribution $\proba$ to $\Gmin$ (See
proofs %of Propositions  \ref{prop:GminInGq}  and \ref{prop:NpaGq}
 in Appendix \ref{sapp:proofProps}).

\begin{Prop}\label{prop:GminInGq} Assume  $\proba$  is faithful  to
  $\Gmin$.  For all $q <p$, we have $\Gmin \subseteq \gq $.  
\end{Prop}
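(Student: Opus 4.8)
The plan is to show the contrapositive at the level of edges: if an edge $(X^j_{t-1}, X^i_t)$ is \emph{absent} from $\gq$, then it must also be absent from $\Gmin$. Fix $i,j \in \p$ and $t>1$, and suppose $(X^j_{t-1}, X^i_t) \notin E(\gq)$. By Definition \ref{def:gq}, this means there exists a subset $Q \subseteq \pj$ with $|Q| = q$ such that $X^i_t \indep X^j_{t-1} \vert X^Q_{t-1}$. The goal is to conclude that $(X^j_{t-1}, X^i_t) \notin E(\Gmin)$, which by the characterization following Proposition \ref{prop:Gmin} is equivalent to $X^i_t \indep X^j_{t-1} \vert pa(X^i_t, \Gmin)$.

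The key step is to invoke faithfulness. Since $\proba$ is faithful to $\Gmin$, the independence statement $X^i_t \indep X^j_{t-1} \vert X^Q_{t-1}$ is \emph{entailed} by $\Gmin$ in the sense of Proposition \ref{prop:gAnc}; that is, $X^Q_{t-1}$ separates $X^i_t$ from $X^j_{t-1}$ in $(\G_{An(\{X^i_t\} \cup \{X^j_{t-1}\} \cup X^Q_{t-1})})^m$. I would then argue that any path from $X^j_{t-1}$ to $X^i_t$ in the relevant moral graph, after being intersected by the separating set $X^Q_{t-1}$, must in fact already be intersected by $pa(X^i_t, \Gmin)$ — because in $\Gmin$ every directed edge into $X^i_t$ originates from a parent of $X^i_t$, and any path reaching $X^i_t$ must pass through one of these parents (or through an edge married in the moralization, which again involves a co-parent of $X^i_t$, i.e.\ an element of $pa(X^i_t,\Gmin)$). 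Hence $pa(X^i_t,\Gmin)$ is itself a separating set for $X^i_t$ and $X^j_{t-1}$, so by the directed global Markov property (Proposition \ref{prop:gAnc}) we obtain $X^i_t \indep X^j_{t-1} \vert pa(X^i_t,\Gmin)$, which gives $(X^j_{t-1},X^i_t) \notin E(\Gmin)$ as required. This establishes $E(\Gmin) \subseteq E(\gq)$, i.e.\ $\Gmin \subseteq \gq$.

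Actually, a cleaner route avoids reasoning inside moral graphs altogether: since $X^Q_{t-1} \subseteq X_{t-1}$, and both $X^i_t$ and $X^j_{t-1}$ together with $X^Q_{t-1}$ involve only variables at times $t-1$ and $t$, the ancestral closure adds only further variables from earlier times, none of which are children of $X^i_t$. One can then use the standard fact that for any DAG, a node $X^i_t$ is $d$-separated from $X^j_{t-1}$ by its parent set $pa(X^i_t,\Gmin)$ whenever $X^j_{t-1}$ is a non-descendant of $X^i_t$ — which holds here because all edges point forward in time, so $X^j_{t-1}$ cannot be a descendant of $X^i_t$. Combining this built-in $d$-separation with faithfulness (to pass from the existence of \emph{some} separating set $X^Q_{t-1}$ to the local Markov property at $X^i_t$) yields the independence $X^i_t \indep X^j_{t-1} \vert pa(X^i_t,\Gmin)$ directly.

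The main obstacle I anticipate is the bookkeeping in the moral-graph argument: one must be careful that moralizing the ancestral set $An(\{X^i_t\} \cup \{X^j_{t-1}\} \cup X^Q_{t-1})$ does not create a path from $X^j_{t-1}$ to $X^i_t$ that sneaks past $pa(X^i_t,\Gmin)$ via a marriage edge among co-parents. The forward-in-time edge structure of $\Gmin$ (no edges within a time slice, all edges from $t-1$ to $t$) is exactly what rules this out, since every neighbour of $X^i_t$ in the moral graph — whether by a genuine edge or by marriage — lies in $pa(X^i_t,\Gmin)$; I would state this explicitly as the crux of the argument. The role of faithfulness is strictly to guarantee that the probabilistic independence witnessed by $X^Q_{t-1}$ corresponds to a graphical separation, so that Proposition \ref{prop:gAnc} can be applied in reverse; without it, the existence of a separating subset of size $q$ would carry no structural information about $\Gmin$.
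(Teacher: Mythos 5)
Your contrapositive set-up and your use of faithfulness are the right ingredients: from $(X^j_{t-1},X^i_t)\notin E(\gq)$ you obtain $Q\subseteq\pj$, $\vert Q\vert=q$, with $X^i_t\indep X^j_{t-1}\vert X^Q_{t-1}$, and faithfulness turns this into a graphical separation of $X^i_t$ from $X^j_{t-1}$ by $X^Q_{t-1}$ in the moral graph of the smallest ancestral set containing the three. But at that point the conclusion should be drawn directly, as the paper does: if $(X^j_{t-1},X^i_t)$ were an edge of $\Gmin$, that very edge would appear in the moral graph of \emph{any} ancestral set containing both endpoints, giving a one-edge path from $X^j_{t-1}$ to $X^i_t$ that avoids $X^Q_{t-1}$ (note $Q\subseteq\pj$, so neither endpoint lies in $X^Q_{t-1}$), contradicting the separation. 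Adjacent vertices cannot be separated by any set; that single observation is the crux, not the structure of $pa(X^i_t,\Gmin)$.

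Your detour through ``$pa(X^i_t,\Gmin)$ separates $X^i_t$ from $X^j_{t-1}$'' is where the argument breaks down. That separation statement is true exactly when $X^j_{t-1}\notin pa(X^i_t,\Gmin)$, i.e.\ when the edge is absent — the very conclusion you are after — and it does not follow from ``every neighbour of $X^i_t$ in the moral graph is a parent'', because that holds whether or not the edge is present (if it is present, $X^j_{t-1}$ is itself such a neighbour). In the edge-present case the ``independence'' $X^i_t\indep X^j_{t-1}\vert pa(X^i_t,\Gmin)$ you would derive is vacuous (the conditioning set contains $X^j_{t-1}$), and the characterization following Proposition~\ref{prop:Gmin} cannot be invoked in that degenerate situation to conclude the edge is absent. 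A symptom of the circularity is that your key step never actually uses the existence of $Q$: as written, it would apply to every pair $(X^j_{t-1},X^i_t)$ and ``show'' that $\Gmin$ has no edges at all. The ``cleaner route'' has the same flaw: the standard fact is that a node is d-separated, given its parents, from its non-descendants \emph{other than its parents}, so applying it to $X^j_{t-1}$ presupposes the edge is absent. Finally, faithfulness is not what yields that local Markov-type statement (it follows from the BN factorization); its only role in this proposition is to convert the independence witnessed by $X^Q_{t-1}$ into a separation, whose one useful consequence is the non-adjacency of $X^j_{t-1}$ and $X^i_t$ in $\Gmin$.
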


\begin{Cor}\label{cor:NMaxGmin} Assume  $\proba$  is faithful  to
  $\Gmin$. \!\!\! For all $q \!\geq \! \NpaMaxGmin$, we have $\Gmin =\gq$\!.  

\end{Cor}

\begin{Prop}\label{prop:NpaGq}  Assume   $\proba$  is  faithful   to  $\Gmin$. \\
  \indent  If  $ N_{pa}(X^i_t,  \gq) \leq q  $ then $\  (X^j_{t-1},X^i_t) \in
  E(\gq) \ \Rightarrow \ (X^j_{t-1},X^i_t) \in E(\Gmin)$.  

\end{Prop}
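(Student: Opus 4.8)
The plan is to mimic the argument that established Proposition \ref{prop:infq}, but now running the implication in the reverse direction and using the faithfulness hypothesis to convert a separation fact about $\gq$ into a separation fact about $\Gmin$. So suppose $(X^j_{t-1}, X^i_t) \notin E(\Gmin)$; I want to conclude $(X^j_{t-1}, X^i_t) \notin E(\gq)$, which is the contrapositive of the stated implication. By Proposition \ref{prop:Gmin}, a non-edge in $\Gmin$ means $X^i_t \indep X^j_{t-1} \vert X^{\pj}_{t-1}$, and more usefully, $\text{pa}(X^i_t,\Gmin)$ separates $X^i_t$ from $X^j_{t-1}$ in the relevant moral graph, so that $X^i_t \indep X^j_{t-1} \vert \text{pa}(X^i_t,\Gmin)$. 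The difficulty is that $|\text{pa}(X^i_t,\Gmin)|$ need not equal $q$, so this conditioning set is not directly admissible in Definition \ref{def:gq}; I need a conditioning set of size exactly $q$.

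First I would show that $X^i_t \indep X^j_{t-1} \vert X^R_{t-1}$ for \emph{some} subset $R \subseteq \pj$ of size exactly $q$. By Proposition \ref{prop:GminInGq} (which uses faithfulness), $\Gmin \subseteq \gq$, hence $N_{pa}(X^i_t,\Gmin) \leq N_{pa}(X^i_t,\gq) \leq q$ by hypothesis. Thus $\text{pa}(X^i_t,\Gmin)$ has at most $q$ elements and is contained in $\pj$ (since $j$ is not a parent of $X^i_t$ in $\Gmin$). Extend it to a set $R \subseteq \pj$ with $|R| = q$; this is possible because $|\pj| = p-1 \geq q$. Now $\text{pa}(X^i_t,\Gmin) \subseteq R \subseteq X_{t-1}$, and $\text{pa}(X^i_t,\Gmin)$ already blocks every path from $X^i_t$ to $X^j_{t-1}$ in $(\Gmin_{An(\cdot)})^m$; adding the further past variables of $R$ cannot create a new unblocked path into $X^i_t$ in this DAG, since all parents of $X^i_t$ are already in the conditioning set and $X^i_t$ has no children among the ancestral set of interest. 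Hence $R$ still separates $X^i_t$ from $X^j_{t-1}$, and by Proposition \ref{prop:gAnc}, $X^i_t \indep X^j_{t-1} \vert X^R_{t-1}$ with $|R| = q$.

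Finally, this exhibits a subset $Q = R \subseteq \pj$ with $|Q| = q$ and $X^i_t \indep X^j_{t-1} \vert X^Q_{t-1}$, so by Definition \ref{def:gq} the edge $(X^j_{t-1}, X^i_t)$ is absent from $\gq$. This proves the contrapositive, hence the proposition.

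I expect the main obstacle to be the separation step in the middle paragraph: one must argue carefully that enlarging the conditioning set from $\text{pa}(X^i_t,\Gmin)$ to a $q$-element superset $R$, all of whose extra elements lie in $X_{t-1}$, preserves the separation of $X^i_t$ from $X^j_{t-1}$. The cleanest route is to note that in $\Gfull$ (and hence in $\Gmin \subseteq \Gfull$) every edge goes from time $t-1$ to time $t$, so within the ancestral set under consideration the only relevant paths from $X^j_{t-1}$ to $X^i_t$ pass through parents of $X^i_t$; conditioning on any additional variables at time $t-1$ — none of which is a descendant of $X^i_t$ — cannot open a blocked collider path. Alternatively one may invoke faithfulness a second time together with the semi-graphoid contraction/weak-union properties of conditional independence to pass from $X^i_t \indep X^j_{t-1} \vert \text{pa}(X^i_t,\Gmin)$ directly to $X^i_t \indep X^j_{t-1} \vert X^R_{t-1}$, but the graphical argument is more transparent and self-contained.
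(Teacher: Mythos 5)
Your proposal is correct and follows essentially the same route as the paper: faithfulness gives $\Gmin \subseteq \gq$ and hence $N_{pa}(X^i_t,\Gmin) \leq N_{pa}(X^i_t,\gq) \leq q$, after which the contrapositive is obtained exactly as in Proposition~\ref{prop:infq}, which the paper simply cites while you re-derive it inline. Your additional step of enlarging $\text{pa}(X^i_t,\Gmin)$ to a set $R \subseteq \pj$ of size exactly $q$ and checking that separation is preserved is a sound (and slightly more careful) treatment of the requirement $\vert Q \vert = q$ in Definition~\ref{def:gq}, which the paper's argument passes over silently.
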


\begin{Cor}\label{cor:NMaxGq} Assume $\proba$ is faithful to $\Gmin$.  For all
  $ q \geq N_{pa}^{Max}(\gq) $, %we have 
  $\Gmin = \gq $.  

\end{Cor}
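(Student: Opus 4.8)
The plan is to obtain this corollary by combining Proposition~\ref{prop:GminInGq} with Proposition~\ref{prop:NpaGq}, in the same spirit as Corollary~\ref{cor:NMaxGmin} was obtained from Proposition~\ref{prop:GminInGq} alone. Note first that, since both $\Gmin$ and $\gq$ are subgraphs of $\Gfull$, every edge of either DAG is of the form $(X^j_{t-1},X^i_t)$ with $i,j \in \p$ and $t>1$; hence the equality $\Gmin = \gq$ amounts to a double inclusion of such edge sets, and throughout we work with a fixed $q$ satisfying $\NpaMaxGmin \ \text{-type bound}$, more precisely $N_{pa}^{Max}(\gq) \leq q < p$ (the family $\gq$ being defined only for $q<p$ by Definition~\ref{def:gq}).

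First I would establish the inclusion $\Gmin \subseteq \gq$. Since $q<p$ and $\proba$ is assumed faithful to $\Gmin$, this is exactly the statement of Proposition~\ref{prop:GminInGq}, so nothing further is needed here.

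Next I would establish the reverse inclusion $\gq \subseteq \Gmin$ from the hypothesis $q \geq N_{pa}^{Max}(\gq)$. By definition $N_{pa}^{Max}(\gq) = \max_{i \in \p,\, t \in \n} N_{pa}(X^i_t,\gq)$, so the hypothesis is equivalent to $N_{pa}(X^i_t,\gq) \leq q$ holding simultaneously at \emph{every} vertex $X^i_t$. Consequently the hypothesis of Proposition~\ref{prop:NpaGq} is met at each vertex, and that proposition yields, for every edge $(X^j_{t-1},X^i_t) \in E(\gq)$, that $(X^j_{t-1},X^i_t) \in E(\Gmin)$; i.e. $E(\gq) \subseteq E(\Gmin)$, hence $\gq \subseteq \Gmin$. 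Combining the two inclusions gives $\Gmin = \gq$.

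I do not anticipate a genuine obstacle: the argument is a direct chaining of the two preceding propositions under the standing faithfulness assumption. The only point deserving a moment's care is verifying that the single scalar bound $q \geq N_{pa}^{Max}(\gq)$ licenses the application of Proposition~\ref{prop:NpaGq} uniformly over all vertices $X^i_t$, which is immediate once one unfolds the definition of $N_{pa}^{Max}$ as a maximum over $i \in \p$ and $t \in \n$; and, as in Corollary~\ref{cor:NMaxGmin}, noting that the constraint $q<p$ is implicit since $\gq$ is only defined in that range.
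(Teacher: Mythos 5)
Your argument is correct and is exactly the paper's (implicit) justification: the paper derives Corollary~\ref{cor:NMaxGq} by combining Proposition~\ref{prop:GminInGq} (giving $\Gmin \subseteq \gq$ under faithfulness) with Proposition~\ref{prop:NpaGq} applied at every vertex, since $q \geq N_{pa}^{Max}(\gq)$ forces $N_{pa}(X^i_t,\gq) \leq q$ for all $i,t$, yielding $\gq \subseteq \Gmin$. Nothing is missing.
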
 
 
 Whenever $\proba$  is faithful  to
  $\Gmin$, DAG $\gq$ contains DAG $\Gmin$ (Proposition \ref{prop:GminInGq}).  
Even though we expect the number  of parents in a gene interaction networks to be  bounded aboce,
 the exact  maximal number of parents  $\NpaMaxGmin$ remains mostly
unknown.   However, we show that the  edges of DAG  $\gq$ pointing towards a  variable having
less than $q$ parents in $\gq$ are edges of $\Gmin$ too (Proposition \ref{prop:NpaGq}). 
Thus, if  $\proba$ is  faithful to  $\Gmin$, knowledge of the topology of DAG $\gq$ only allows us to ascertain some  edges of DAG $\Gmin$.
From Propositions \ref{prop:GminInGq} and %Corollary \ref{cor:NMaxGmin}
  \ref{prop:NpaGq}, we establish that both DAG $\gq$ and DAG  $\Gmin$ exactly coincide if any node of $\gq$ has less than $q$ parents (Corollary \ref{cor:NMaxGq}).

\section{\textit{G1DBN}, a procedure for DBN inference}\label{sec:estim} 

We introduced and characterized the  $q^{th}$ order dependence DAGs $\gq$, for
all $  q< p$, for dynamic  modelling.  We now  exploit our results to  develop a
non-Bayesian inference method  for DAG $\Gmin$ defining a DBN representation for process $X$.  Let $q_{max}$  be the maximal
number  of parents  in  $\Gmin$.  From  Corollary \ref{cor:NMaxGq},  inferring
$\Gmin$  amounts to  inferring  $\G^{(q_{max})}$.  
However,  the inference  of
$\G^{(q_{max})}$ requires to  check, for each pair $(i,j)$,  if there exists a
subset  $Q \subseteq  \pj$  of  dimension $q_{max}$  such  that $X^i_t  \indep
X^j_{t-1} \vert X^Q_{t-1}$ for all $t>1$. So, for each pair $(i,j)$, there are
$\left  (^{q_{max}}_{p-1}  \right  )$  potential  sets  that  can  lead  to
conditional independence. 
To  test  each conditional  independence given  any
possible  subset of  $q_{max}$  variables  is questionable  both  in terms  of
complexity and multiple testings.

To circumvent these issues, we propose to
exploit  the  fact  that the  true  DAG  $\Gmin$  is  a subgraph  of  $\gun$
(Proposition \ref{prop:GminInGq}) in order to develop an inference procedure for $\Gmin$. 
Indeed, the inference of $\G^{(1)}$
is  both  faster (complexity)  and  more accurate  (number  of tests). 
Thus we introduce a 2 step-procedure % \textit{G1DBN} %, \textit{G1DBN}, 
for DBN inference. In the first step, we infer  the $1^{st}$ order dependence DAG $\gun$, then we infer DAG  $\Gmin$  from the  estimated  DAG
$\hat{\G}^{(1)}$. 
This 2 step-procedure, summarized in Figure \ref{fig:algo}, %together with some analysis tools, 
is implemented in a R package `G1DBN' \cite{G1DBN}
freely available from the Comprehensive R Archive Network. %\cite{CRAN}

\subsection {Step 1: inferring $\gun$}\label{ssec:g1hat}

We evaluate the \textit{likelihood} of an edge $(X^j_{t-1},X^i_t)$ 
by measuring the conditional dependence
between the variables $X^j_{t-1}$ and $X^i_t$ given any variable $X^k_{t-1}$. 
Assuming linear dependencies, we consider   the  partial regression coefficient $a_{ij  \vert k}$ defined as follows,

\begin{equation*}\label{eq:aijk} X^i_t = m_{ijk}+a_{ij\vert k} X^j_{t-1} +
  a_{ik\vert j} X^k_{t-1} +\eta^{i,j,k}_t, 
%, \   \  \eta^{i,j,k}_t   \sim  \mathcal{N}(0,\sigma^{i,j,k}). 
 \end{equation*} 

\noindent where  the rank  of the matrix  $(X^j_{t-1}, X^k_{t-1})_{t  \geq 2}$
  equals $2$ and the errors $\{\eta^{i,j,k}_t\}_{t\geq 2}$ are centered, have same variance
  and are not correlated. 
%$(\eta^{i,j,k}_t)_{t\geq     2}    \sim  \N(0,(\sigma^{i,j,k})^2 I_{n-1})$.  

We measure the conditional dependence
between the  variables $X^j_{t-1}$ and $X^i_t$ given  any variable $X^k_{t-1}$
by testing the null assumption $\mathcal{H}^{i,j,k}_0$: ``$a_{ij\vert k}= 0$''.
% We  define  $S_1(i,j\vert   k)$  from  the  test  for   the  null  assumption
% $\mathcal{H}^{i,j,k}_0$ for coefficient $a_{ij\vert k}$. 
To such  an aim, we  use one out  of three M-estimators for  this coefficient:
either the familiar Least Square (LS) estimator, the \textit{Huber} estimator,
or  the \textit{Tukey  bisquare}  (or \textit{biweight})  estimator.  The  two
latter  are  robust estimators  \cite{Fox02}.   Then  for  each $k\not=j$,  we
compute the estimates  $\hat{a}_{ij \vert k}$ according to  one of these three
estimators and  derive the p-value  $p_{ij,k}$ from the  standard significance
test:

\begin{equation}\label{eq:testS1}
under \ (\mathcal{H}^{i,j,k}_0): \ ``\ a_{ij\vert k}=0\ \text{''}, \ \ \ \ \ \ \ \ \frac{\hat{a}_{ij\vert k}}{\hat{\sigma}(\hat{a}_{ij\vert k})} \sim t(n-4), 
\end{equation}
\noindent where $t(n-4)$ refers to a student probability distribution with $n-4$
degrees of freedom
% $\hat{a}_{ij \vert k}$ is the estimates for the coefficient $a_{ij \vert k}$ 
and $\hat{\sigma}(\hat{a}_{ij\vert k})$ is the variance estimates for  $\hat{a}_{ij \vert k}$.

Thus, we assign a score  $S_1(i,j)$ to each potential edge $(X^j_{t-1},X^i_t)$
equal to the  maximum $\mathop{Max}_{k \not=j} (p_{ij\vert k}  )$ of the $p-1$
computed  p-values,  that is  the  most  favorable  result to  $1^{st}$  order
conditional  independence.  This procedure  does not  derive p-values  for the
edges but  allows to order the possible  edges of DAG~$\gun$  according to how
likely they are. The smallest scores  point out the most significant edges for
$\gun$. The inferred  DAG $\gunhat$ contains the edges assigned  a score below a
chosen threshold $\alpha_1$.
%$$S_1(i,j)=\mathop{Max}_{k  \not=j} (p_{ij\vert k}  )$$
% among the $p-1$ scores $S_1(i,j \vert k)$. 
%or $S_2(i,j)$ depending on the procedure)
% and order the edges according to how \textit{probable} they are.
%When using a testing approach, %(Algorithm 1)
%the score of each edge is the maximum of the $p-1$ computed p-values. 

%We compare the three estimators used for the inference of $\Gmin$ in a simulation
%study in the next section (Figure \ref{fig:ROC}, right).  

% Note  that  testing  the   nullity  of  conditional  correlation  coefficients
% $Corr(i,j \vert  k)=Cov(i,j\vert k)  / \sqrt(Var(i \vert  k) Var(j  \vert k))$
% leads to comparable results.

\begin{figure}
  \centering
  \begin{tabular}{c}\hline
\begin{minipage}{13.5cm}
  \ttfamily
  \small
Choose either LS, \!\!Huber or Tukey estimator and set $\alpha_1$ \!\! and \!\! $\alpha_2$ thresholds.\\
\vspace{0.1cm}
 \textbf{Step 1: inferring $\gun$.} \\
\hspace{0.5cm} For all $i \in \p$,\\
\hspace{0.5cm} \hspace{0.5cm} For all $j \in \p$, for all $k \not= j$, compute
  the p-value $p_{ij\vert k}$ from (\ref{eq:testS1}),\\
\hspace{0.5cm}\hspace{0.5cm} $S_1 (i,j) = \mathop{Max}_{k \not=j} (p_{ij\vert k} )$.\\
\hspace{0.5cm} $E(\hat{\G}^{(1)})=\{ (X^j_{t-1},X^i_t)_{t>1}; \ i,j \in \p, \text{ such that } S_1(i,j) < \alpha_1 \}$.  \\
\vspace{0.1cm}
\textbf{Step 2: inferring $\Gmin$ from  $\hat{\G}^{(1)}$.}\\
\hspace{0.5cm} If $N_{pa}^{Max} (\hat{\G}^{(1)}) \sim n-1$, choose a higher
threshold $\alpha_1$ and go to Step1.\\
\hspace{0.5cm} For all $\!i\!$ such that $\!\! N_{pa}(X^i_t,\hat{\G}^{(1)}) \!\geq \! 1$, $\!\!\!\!$ compute the
  p-value $p_{ij}^{(2)}\!\!$ from (\ref{eq:testS2}). \\%from the next model,\\
\hspace{0.5cm} \hspace{0.5cm} $S_2 (i,j)= \left \{
\begin{array}{ll}
 p_{ij}^{(2)}&for \ all \ i,j \in \p \ such \ that \ (X^j_{t-1},X^i_t)_{t>1} \in \hat{\G}^{(1)},\\
1&otherwise.\\
\end{array}
\right .$\\
\hspace{0.5cm} $E(\Gmin)= \{ (X^j_{t-1},X^i_t)_{t>1 }; i \in \p, (i,j) \in \p  \text{ such that }  S_2(i,j) < \alpha_2 \}$.
\end{minipage}
\\\hline
  \end{tabular}
  \caption{Outline of the 2 step-procedure \textit{G1DBN} for DBN inference.} 
  \label{fig:algo}
\end{figure}

\vspace{-0.2cm}

\subsection {Step 2: inferring $\Gmin$ from $\gun$}\label{ssec:inferGmin}
We use the inferred DAG $\hat{\G}^{(1)}$ as a reduction of the search
space.  Indeed,   from  faithfulness, we know that  $\Gmin   \subseteq  \gun$  (Proposition
\ref{prop:GminInGq}). Moreover, when DAG $\Gmin$ is sparse, there are far fewer edges in $\gun$ than in the
complete DAG~$\Gfull$ defined in Section \ref{subsec:suffConditions}.  Consequently, the number of parents of each
variable in $\gunhat$ is much smaller than $n$. Then model selection can be
carried  out  using  standard  estimation  and  tests  among the  edges  of
$\hat{\G}^{(1)}$. For each pair $(i,j)$ such that the set of edges $(X^j_{t-1},X^i_t)_{t>1} $ is in
$\hat{\G}^{(1)}$, we denote by $a_{ij}^{(2)} $ the regression
coefficient,
%$a_{ij}^{(2)} $ $\tilde{a}_{ij} $ $a_{ij}' $

\begin{equation}\label{eq:aijpa} 
X^i_t = \ m_{i}+ \sum_{j \in \text{pa}(X^i_t,\hat{\G}^{(1)})} a_{ij}^{(2)}  \ X^j_{t-1} +\eta^{i}_t, 
%, \ \ \ \eta^{i}_t \sim \mathcal{N}(0,\sigma^{i}). 
 \end{equation}
\noindent where the rank of the matrix $(X^j_{t-1})_{t \geq 2, j  \in \text{pa}(X^i_t,\hat{\G}^{(1)})}$
is $\vert pa(X^i_t,\hat{\G}^{(1)}) \vert$ and the errors $\{\eta^{i}_t\}_{t \geq
  2}$ are centered, have the same variance, and are not correlated.  
%% for all  $i,j$ in $\p$ such  that the set of  edges $(X^j_{t-1},X^i_t)_{t>1 }$
%%  are in $\hat{\G}^{(1)}$. 
We assign to  each edge of $\hat{\G}^{(1)}$ a score  $S_2(i,j)$ equal to the
p-value $ p_{ij}^{(2)} $ derived from the significance test,
% $ \tilde{p}_{ij}$ $ p_{ij}' $
\vspace{-0.8cm}

\begin{equation}\label{eq:testS2}
\text{under}  \ (\mathcal{H}^{i,j}_0):  \  `` \ a_{ij}^{(2)}=0\ \text{''},   \  \ \  \ \  \ \  \
\frac{\hat{a}_{ij}^{(2)}  }{\hat{\sigma}(\hat{a}_{ij}^{(2)})} \sim t(n-1-\vert
pa(X^i_t,\hat{\G}^{(1)} )\vert ). 
\end{equation}
\vspace{-0.4cm}

%% \begin{equation}\label{eq:testS2}
%% under \ (\mathcal{H}^{i,j}_0): \ ``a_{ij}^{(2)} =0''       , \ \ \ \ \ \ \ \ \frac{\hat{a}_{ij\vert
%%  pa(X^i_t,\hat{\G}^{(1)}          )          }}{\hat{\sigma}(\hat{a}_{ij\vert
%%  pa(X^i_t,\hat{\G}^{(1)} )})} \sim t(n-1-\vert pa(X^i_t,\hat{\G}^{(1)} )\vert ),
%% \end{equation}
\noindent The  score $S_2(i,j)=1$  is assigned  to the edges  that are  not in
$\hat{\G}^{(1)}$.
% are assigned the score $S_2(i,j)=1$, that is the maximal score. 
%The inference of $\Gmin$ is then carried out among the edges of $\gunhat$. 
The smallest scores indicate the most significant edges. The
inferred DAG for $\Gmin$ contains those edges whose score is below a
chosen threshold $\alpha_2$.

%\paragraph{}
When  $\Gmin$  is  sparse, Step 1 of \textit{G1DBN}  inference procedure gives already  a  good
estimation of $\Gmin$ (See  Precision-Recall curves obtained for simulated data in Figure \ref{fig:PRsimus}). 
Even better results can be obtained  with the 2 step-procedure which requires
to tune two parameters $\alpha_1$ and $\alpha_2$.  Parameter $\alpha_1$ is the
selection threshold of the edges of $\gunhat$ in Step 1 (that is the dimension
reduction threshold), whereas parameter  $\alpha_2$ is the selection threshold
for the  edges of $\Gmin$ among  the edges of DAG  $\hat{\G}^{(1)}$. %See subsection \ref{ssec:alpha1} for more details  on the choice of the thresholds.
% We study the impact of these parameters on the accuracy of the procedure through simulations in the next section.

\vspace{-0.4cm}

\subsection {Choice of the thresholds}\label{ssec:choiceThres}

The choice of thresholds is  often  something non trivial, especially when using multiple testing.  
%The results of Step 2 depends on threshold $\alpha_1$ which must be chosen carefully.  
%Moreover, the score $S_1$ is not a p-value but the maximum of several p-values and 
However, Step 1 of the procedure is conservative by construction. Indeed, the definition of score $S_1$ (equal to the maximum of $p-1$ $p$-values computed for testing 1st-order conditional independence) clearly supports the acceptation of the null assumption, \ie the absence of an edge.
Standard approaches for  multiple testing correction do not apply to choose  $\alpha_1$ threshold.  Thus we introduce  a heuristic approach to choose $\alpha_1$~threshold  which is detailed in Supplementary Material \cite{SuppMat}, Section B%\ref{ssec:alpha1heur}
. Overall, $\alpha_1$ threshold  is chosen so that,  after the Step 1, the number of genes having exactly one parent  in DAG $\gun$ predominates.

The choice of  $\alpha_2$ threshold  is less problematic. Indeed, the second Step of the inference procedure is a standard multivariate regression. Then the usual thresholds  1\%, 5\%  or 10~\% can be chosen or even a lower threshold when a low number of edges is wanted. However, a large number of tests are computed (as many as  edges in DAG~$\gun$). In such multiple testing situations, a set of the predictions are \textit{expected}  to be false and it is useful to control this.
We  control the expected \textit{proportion} of false positives edges, \ie  the False Discovery Rate (FDR) with the approach   introduced by Benjamini and Hochberg\footnote{% summarized as follows.
Let $m$ be the number of remaining edges after Step $1$, %(DAG $\gun$ has $m$ edges)
then Step 2 requires to compute $m$ tests.  \  \ Choose a maximal FDR level $q$ and order the set of $m$ observed $p$-values: \ \ \ $p_{(1)}  \leq \cdots \leq p_{(i)} \leq \cdots \leq p_{(m)}$.  Then reject  the null assumption (H$_0^{(i)}$: ``Edge $i$ is not DAG~$\Gmin$'')  for all $i \leq k$ where $k$ is defined as follows:
$ k= max \left \{ i: p_{(i)} \leq \frac{i}{m} q\right \}.$
\noindent If no such $i$ exists, reject no hypothesis. Benjamini and Hochberg (1995) showed that this procedure ensures  the FDR is lower than $q \frac{m_0}{m}\leq q$ where $m_0$ is the number of true null hypotheses.} \cite{Benjamini95}.

\vspace{-0.4cm}

\subsection {Complexity of the algorithm}
The complexity of this algorithm is $O(p^3)$. However the scores $\left (S_1(i,j)\right )_{j\in P}$  of the incoming edges of each target gene $i$
%are independent and 
can be computed separately by using parallel run.  This option is available in the R package \textit{G1DBN} by  specifying the target gene $i$ in the function  \textit{DBNScoreStep1} dedicated to the Step 1 computation.

All the computations were performed on Redhat WS 4 AMD opteron 270 (2GHz). 
The computation time mostly depends on the number of TF genes, \ie the genes allowed to be parents in the DAG to be  inferred. For an illustration based on DBN inference performed from  
 a real data set by Spellman~\cite{Spellman98} containing $786$ target genes in Section \ref{ssec:appliLevure}, the computation of Step 1  required $7$ minutes when the set of  possible TF genes  was restricted to  $18$ genes (resp. $4$ minutes with the lasso \cite{Tibshirani96} and $7$ seconds with the shrinkage procedure \cite{OpgenStrimmer07}, which are two alternative approaches for DBN inference introduced in Section~\ref{ssec:refMethods}). When  all the 786 genes can be TFs, the computation was parallel run and required $19$ minutes by target gene with G1DBN (resp. $8$~minutes by target gene with the lasso and $5$ minutes for the whole set of 786 target genes with the shrinkage procedure).  Step 2 of \textit{G1DBN} is very quick and requires less than 5 seconds for the 786-TF study.
Despite the need for more time,  inference with \textit{G1DBN} for a data set containing 800 genes is fully computable, especially when parallel running.

\section{Validation}\label{sec:valid}

\subsection{Comparison with two reference methods} \label{ssec:refMethods}
%\vspace{-0.1cm}

 We compare the  \textit{G1DBN}    inference procedure with two  reference  methods  for  model selection for multivariate AR(1) process: the shrinkage approach by Opgen-Rhein and Strimmer  \cite{OpgenStrimmer07} and the lasso (Least Absolute Shrinkage and Selection Operator) introduced by Tibshirani  \cite{Tibshirani96}.    
Opgen-Rhein  and  Strimmer   recently proposed  a  model selection procedure  based on an  analytic approach using James-Stein-Type shrinkage.   The procedure consists of first computing the  partial correlation coefficients,  $r(X^i_t,X^j_{t-1} \vert X^{P_j}_{t-1})$, from the
shrinkage  estimates of the  partial regression  coefficients, and  second, selecting the edges with a \textit{local} false discovery rate approach \cite{efron05}. 
% PR  curves  for shrinkage inference are drawn by ordering the edges by decreasing absolute value of  the partial correlation coefficient computed for each pair of variables.
 %Instead of   inspecting  regression  coefficients,  they   test  the  partial
%correlation coefficients, \ie sgn($a_{ij \vert \Gamma}$) $\sqrt{a_{ij \vert \Gamma}a_{ji \vert \Gamma}}$.
Shrinkage inference is performed using the R code for shrinkage estimation\footnote{available at \texttt{http://strimmerlab.org/software.html}.}  %implemented 
by Opgen-Rhein and Strimmer.
  
The  lasso (also called L1 shrinkage) combines shrinkage  and model selection. The lasso estimates are obtained by minimizing the residual sum of squares subject to the sum of the absolute values of the coefficients being less than a constant. This approach offers the  advantage that it automatically sets many regression coefficients  to zero.   We performed the  lasso  with the R  package LARS  developped by Efron et al. \cite{efron04}. %The lasso penalty is chosen by  cross-validation as recommended by.
%Edges are ordered by decreasing absolute value of the partial regression coefficient lasso estimates. 

%\vspace{-0.1cm}
%\vspace{-0.1cm}

\subsection{Simulation study}\label{ssec:simu}

%\vspace{-0.2cm}

As the discovery of genetic regulatory interaction is a field in progress, validation of predictions made on real gene expression data is only partial, which may render the estimation of true and false positive detection rate not fully reliable \cite{Husmeier03}. Thus we first investigate the accuracy of   \textit{G1DBN},  the shrinkage  and the lasso inference procedures on simulated data. 

%to recover DAG $\Gmin$ defining a DBN representation for a  multivariate AR(1) model. 
%\vspace{-0.4cm}

%\paragraph
%\subsubsection
\paragraph{Data generation} 
%\vspace{-0.1cm}
%\textbf{Data generation} 
We generated 100 random time series according to  a  multivariate AR(1) model  defined by parameters ($A_{[p \times p]},\!B, \Sigma$)
for $p\!=\!50$ genes. Since gene  regulation networks are sparse, each  matrix $A$ contains 5\%  of non zero coefficients.
%(sampled from uniform distribution). 
 While keeping the number of parents low, this does not prevent a vertex from having more than one parent.
%This keeps the number of parents low without preventing it to be higher than one. 
%The coefficients were generated as follows: for 
Non   zero   regression coefficients   $a_{ij}$,  mean 
 coefficients~$b_i$  and error variances $\sigma_i $ were    drawn from  uniform distributions ($a_{ij}, b_i \!\sim\! \mathcal{U}([-0.95;-0.05]  \!\cup \![0.05; 0.95]), \sigma_i  \!\sim\! \mathcal{U}[0.03, 0.08]$).  Time   series were generated under   the corresponding multivariate AR(1) models for $n\!=\!20$ to~$50$.

%\vspace{-0.5cm}

%\subsubsection
\paragraph{Evaluation  based on PR curves}%\label{ssec:eval}
%\vspace{-0.1cm}
%In this section, the criterion used to evaluate our inference procedure \textit{(G1DBN)} for DBN inference using first order partial dependencies through Precision-Recall (PR) curves.
We evaluated the performance of DBN inference procedures using  the \textit{Precision-Recall (PR)} curve as plotted in Figure~\ref{fig:PRsimus}. PR curves show the precision, equal to the Positive Predictive Value (PPV) on the ordinate against  the recall, equal to the  power,  on  the abscissa. PR  curves are drawn  by first ordering the edges by decreasing significance, %according to increasing value of the score S1 or S2 
and then by computing the PPV and power for the first selected edge and for each newly included edge successively. 
We recall the next definitions, 
%\vspace{-0.8cm}
\begin{eqnarray*}
\text{Positive Predictive Value (PPV) }&=& \ \ \text{ True Discovery Rate (TDR)}\\
& = & \ \  \text{ 1- False Discovery Rate (FDR)} \\
&=& \ \  \frac{TP}{TP+FP}\\
%&&\\
\text{Recall \  =  \ Sensitivity \ = \ Power }  &=&\ \  \frac{TP}{TP+FN}
\end{eqnarray*}
%\vspace{-0.1cm}
\noindent where TP refers to the number of true positive edges,  \ie  the number of edges \newline which are selected by the inference procedure and actually belongs to the  true  DAG 
(used for simulating the data);
%or the validation DAG established from biological knowledge); 
FP  refers to the number of false positive edges, \ie the edges which are  selected by the procedure but are not in the  true DAG and FN refers to the number of false negative edges, \ie the number of edges which are not selected by the procedure but are in the true DAG.

%\vspace{0.4cm}

\begin{figure}
\begin{picture}(20,10)
\put(0,-5){\Large A}
\put(0,-258){\Large B}
\end{picture}

    \centering
%\hspace{-0.7cm}\includegraphics[width=6.8cm,height=7.4cm]{ PR20curveGaussPower.ps}
%\hspace{0.3cm}
%\includegraphics[width=6.8cm,height=7.4cm]{ PR20curveNonDiagPower.ps}
%\hspace{-0.7cm}\includegraphics[width=6.8cm,height=7.4cm]{ PR20curveGaussPowerbis.ps}
%\hspace{0.3cm}
%\includegraphics[width=6.8cm,height=7.4cm]{ PR20curveNonDiagPowerbis.ps}
\vspace{-2.8cm}
\hspace{-0.7cm}\includegraphics[width=12cm,height=12cm,angle=-90]{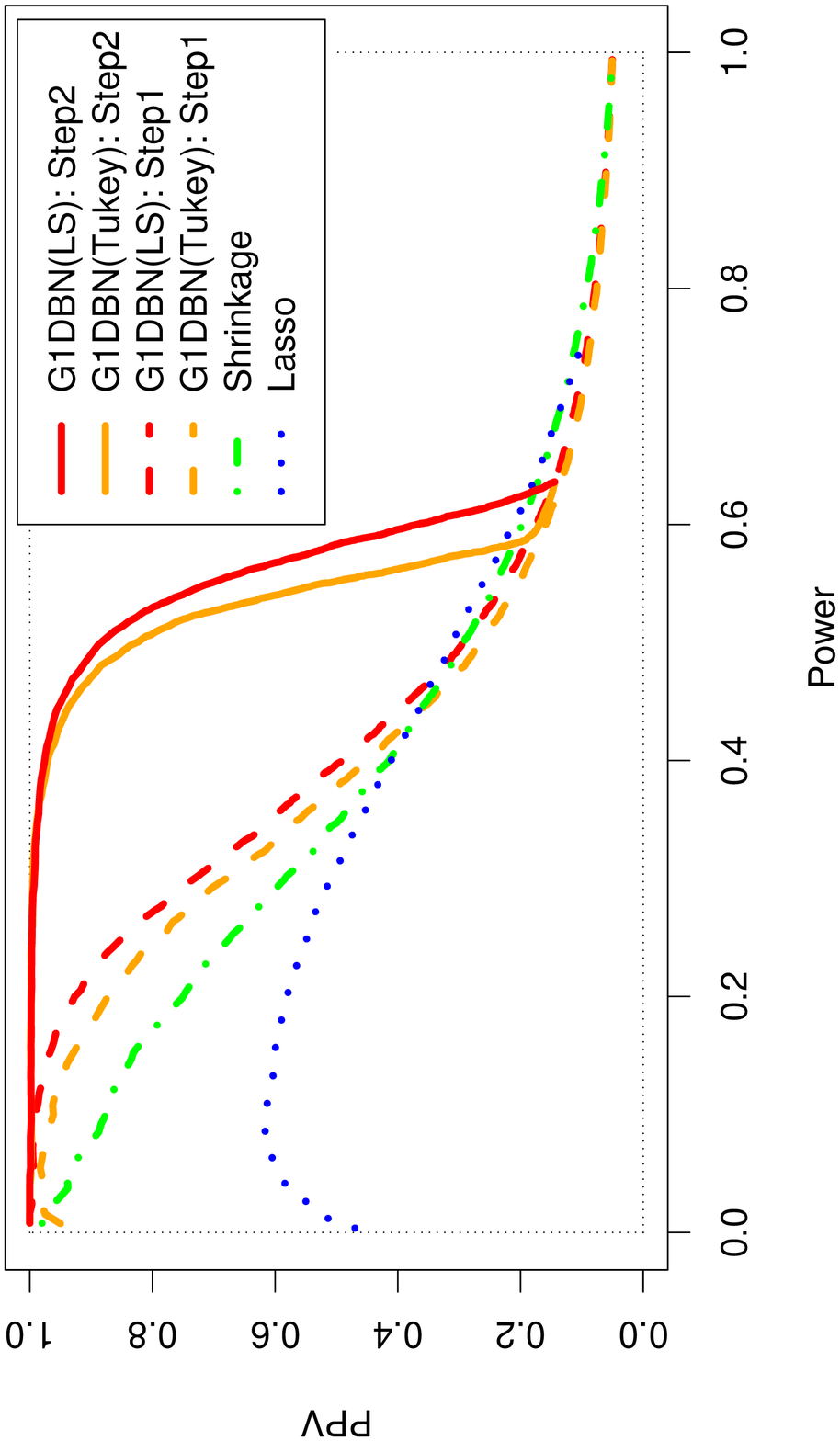}\\
\vspace{-3cm}
\hspace{-0.7cm}\includegraphics[width=12cm,height=12cm,angle=-90]{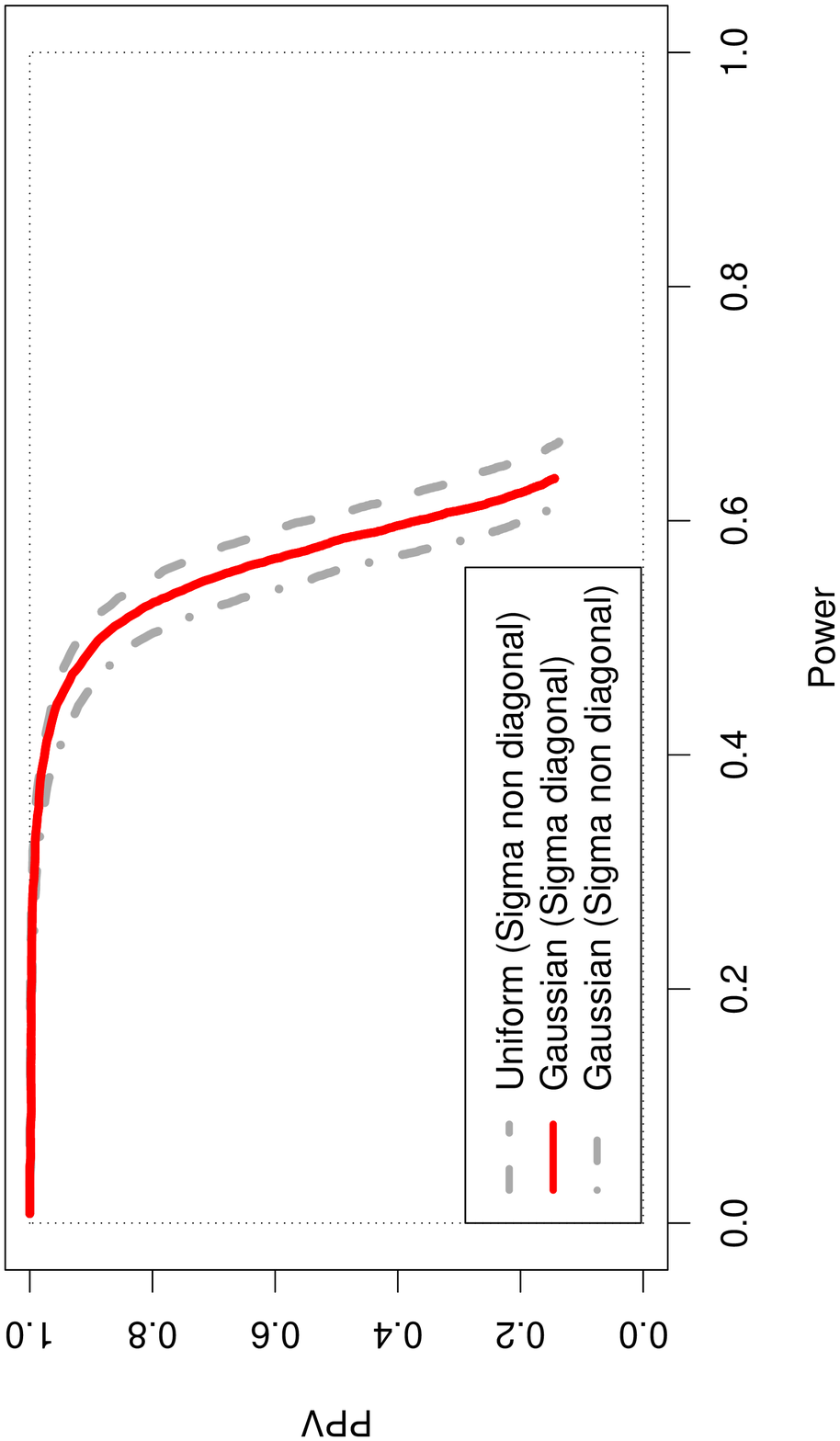}
\vspace{-1.5cm}

\caption{Precision-Recall (PR) curves obtained for network inference from simulated data ($n=20$). (A) Comparison of the inference procedures: G1DBN (LS or Tukey), shrinkage and lasso. Step 2 of the G1DBN approach drastically improves the results  (threshold $\alpha_1=0.7$). (B) Impact of  noisy data, simulated using a non diagonal matrix $\Sigma$ with either Gaussian or uniform noise, %(violation of the assumptions on both covariance matrix $\Sigma$ )
on  the G1DBN procedure (Step 2) computed  with LS estimates. } 
\label{fig:PRsimus}
\end{figure}

%\vspace{-0.4cm}

\newpage 
%\subsubsection
\paragraph{Simulation results}
%\vspace{-0.1cm}
%Figure \ref{fig:ROC} displays  the average ROC  curves for
%DAG $\Gmin_{AR(1)}$. 
% of the simulations. 
 We show  on Figure \ref{fig:PRsimus} the results obtained with $n=20$, a length one can expect from existing gene expression   time series.
  Figure \ref{fig:PRsimus}A 
 displays the average Precision Recall  (PR) curves  obtained with the various inference approaches when the error  covariance matrix $\Sigma$ is diagonal and the noise distribution is Gaussian.
%The PR curves For the \textit{G1DBN} procedure,  
%%!%%the \textit{G1DBN} procedure  after Step 1(dashed lines) and Step 2 (solid lines);  with either LS (black lines) or Tukey bisquare  (light blue lines) estimates. % when $n=20$.
% performed with obtained by $\gun$ approximation (Step 1) with the LS estimator  for $n=20$  to  $50$. 
%The ordinates axe displays 
 % The PR curve obtained after 
The Step~1 of the \textit{G1DBN} procedure computed  either with the LS estimator or with the Tukey estimator  (dashed lines)  
gives a very high PPV  for the very first selected edges. 
%?%Initially, the PPV is greater than 95\% while the power goes up to 20\%, but then the PPV decreases almost linearly against the power. 
%ere comes the interest of the second step of our procedure. 
The Step 2 of  the \textit{G1DBN}  procedure (solid line) 
drastically improves the results. It allows to maintain the  PPV greater than 95~\% while the power goes up to 50\%. 
%%The  PR curves obtained with Huber  estimates are very  similar to those obtained with the Tukey bisquare  estimator and do not appear for sake of clarity.
PR curves computed with the Huber  estimates (not shown)  %(light colored curve)
led to comparable results.
%The PR  curve obtained  with the shrinkage  approach appears  in dashed-dotted line (-$\cdot$-$\cdot$), the lasso PR  curve is plotted in dotted line ($\cdots$). 
%% We carried out  model selection in the simulated data with  both the R package
%% developped by Opgen-Rhein and Strimmer and the LARS
%% package \cite{efron04} for the lasso.  to order the edges
%% of estimate shrinkage
%% estimation the R package LARS \cite{efron04} for the lasso. 
%The right panel of Figure \ref{fig:ROC} displays ROC curves for both the Shrinkage and the lasso
%regression. 
%When  using   the  2 step-procedure with robust estimation, we reach 80\% TP whereas the FP rate is almost  null.  
%%By basing on these simulated datasets, %Indeed, the  PPV for the lasso (dotted line) does not exceed 60\%.    
The lasso (dotted line) is  clearly outperformed by the other approaches and
  the shrinkage approach  (dashed-dotted line) gives results comparable to the Step~1 of the  \textit{G1DBN} procedure only.  The results of the three methods are naturally improved for greater values of $n$ but their relative perfomances are preserved (curves not shown).

We investigated the impact of the violation of the model assumptions. %  on the noise distribution. 
%: Gaussian distribution and diagonal covariance matrix. 
First we performed DBN inference on simulated data where the error  covariance matrix $\Sigma$ is   not diagonal (3\% of the coefficients outside the diagonal differ from 0)  and the noise distribution is  either Gaussian or uniform $\left (\mathcal{U}[-2;2] \right )$. As shown on Figure \ref{fig:PRsimus}B, the accuracy of the  \textit{G1DBN} procedure (Step 2) is not strongly affected %in both cases 
when these assumptions on the noise distribution are not satisfied.
% The PR curve slightly decreased in the Gaussian case (dotted-dashed line)
However, it is difficult to get rid of the $1^{st}$ order Markov Assumption which was chosen in order to reduce the model dimension.
%, the approach is dedicated to the inference of constant time delayed dependence relationships only. 
When simulating an AR(2) model, the 2-order time dependencies existing in the model are missed. However, the 1-order time dependencies existing in the model are still recovered. Then, when considering a $2^{nd}$ order Markov process, an approximation can still be performed by successively inferring $1$- and $2$-order time dependencies.
%Finally, 
Note that the procedure also performs well when the number of parents in the true
DAG  $\Gmin$ is greater than one (See %Figure 1 % \ref{fig:nbPar} in 
 Supp. Material \cite{SuppMat}, Section A% \ref{app:nbParents}
 ).

%We  ordered  the edges  $(i,j)$ according  to increasing score S1 (maximal  p-values $ \mathop{Max}_{k  \not=j} (p_{ij\vert k}  )$, 
%%for the significance  tests of the  partial regression coefficient  estimates (
%see section \ref{ssec:g1hat}  for details).  

%For a very  low false  positive (FP) rate, the  true positive (TP)  rate rises  70 \% for  the longer time  series. 
%Even when  $n=20$, which  is about  the maximal length  of the  available time
%series gene expression  data, the TP rate reaches almost 60  \% whereas the FP
%rate remains almost null.

%\begin{figure}
%    \centering
%%%%%\includegraphics[height=7.3cm,width=7.5cm,angle=-90]{ 0PR18TFzoom.ps}
%%%%%\includegraphics[height=7.3cm,width=7.5cm,angle=-90]{ 0PR786TFzoom.ps}
%\includegraphics[height=7.3cm,width=7.5cm]{ 0PR18TFzoomBio.ps}
%\includegraphics[height=7.3cm,width=7.5cm]{ 0PR786TFzoomBio.ps}
%\caption{Zoom on Precision-Recall (PR) curves obtained from various inference procedures in two analyses carried out on Spellman's \textit{Saccharomyces  cerevisiae} cell cycle dataset (786 genes). Left: the set of putative TFs is reduced to 18 genes identified as potential TFs in \cite{Tsai05}; Right: all the 786 genes are eligible for being TFs. The horizontal dashed line, referred to as ``Chance'', represents the PR curve we can expect to obtain by selecting the edges at random (9\% of validated edges in the 18 TFs case, 0.0026 \% in the 786 TFs case).
%}
%\label{fig:PRyeast}
%\end{figure}

%\vspace{-0.5cm}

\subsection{Analysis of microarray time course data sets}\label{ssec:realdata}
%\vspace{0.2cm}

\begin{figure}%[t]
\begin{picture}(10,10)
%\put(-5,0){\Large A}
%\put(-5,-260){\Large B}
\put(-5,10){\Large A}
\put(-5,-200){\Large B}
\end{picture}

\centering
\vspace{-2cm}

\includegraphics[height=9cm]{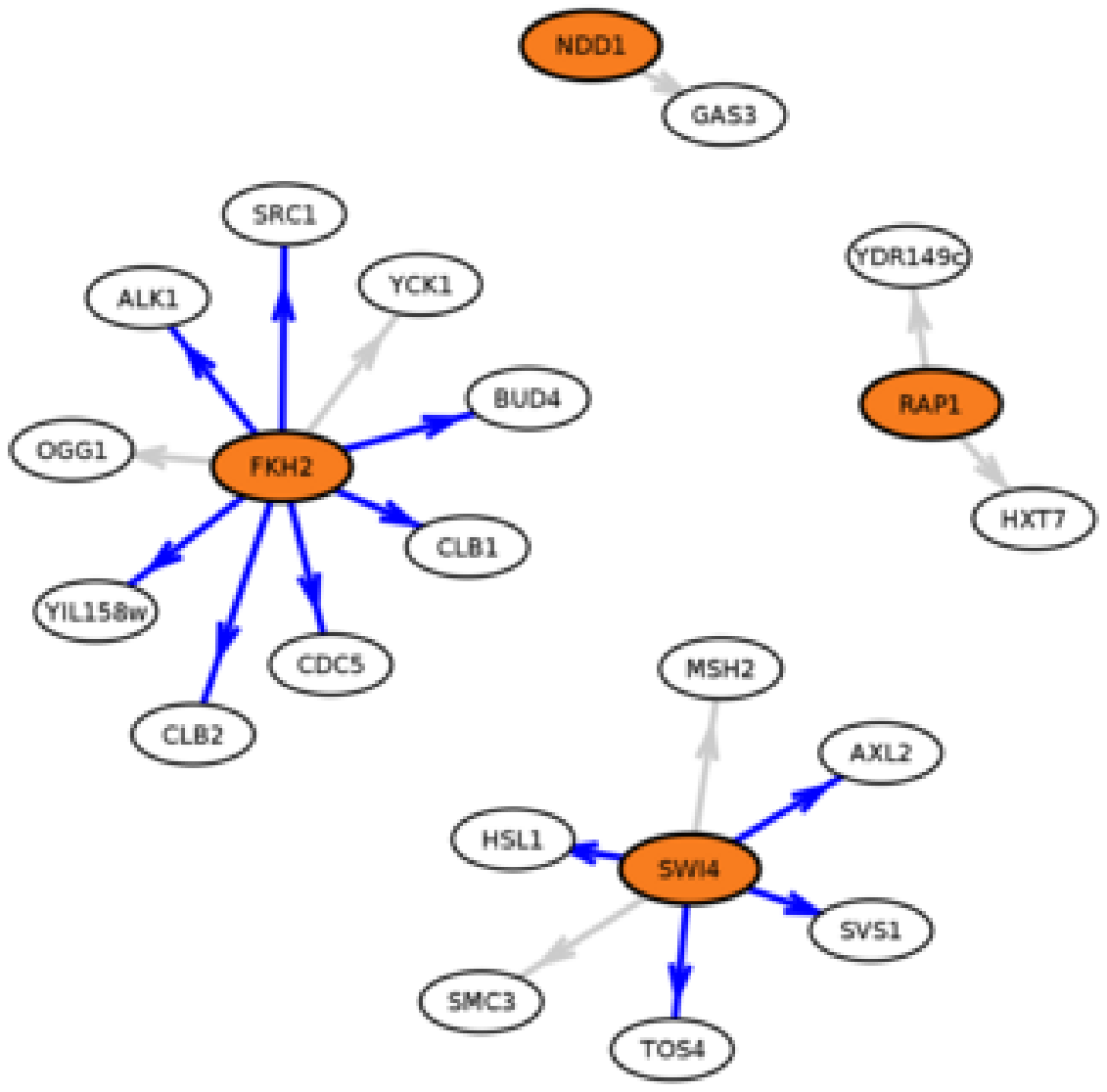} %???

 \vspace{-1.5cm}

\includegraphics[height=10cm,width=10cm,angle=-90]{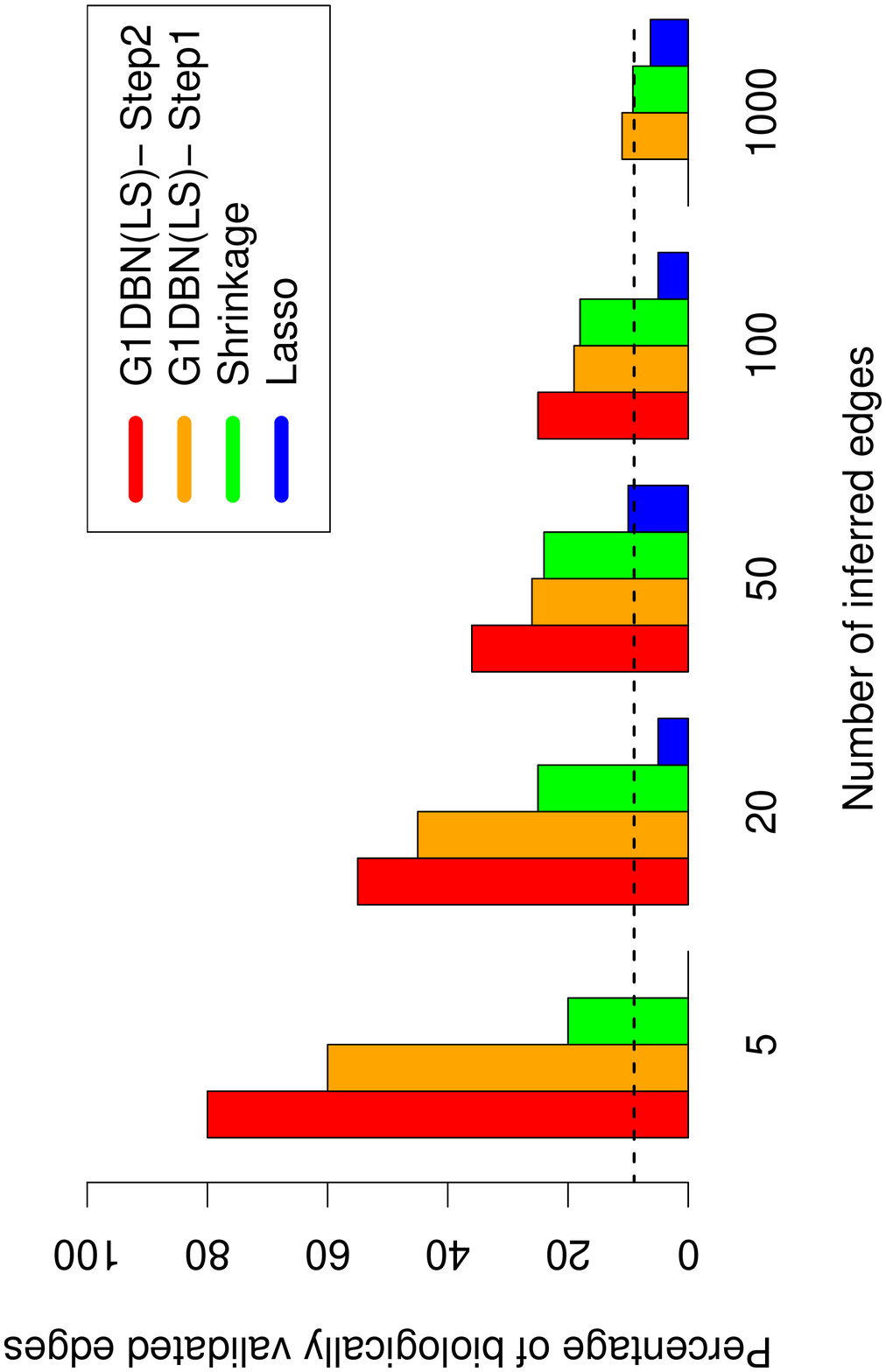}
\vspace{-0.2cm}

\caption{Some  results of the 18 TF-survey of \textit{S. cerevisiae} cell cycle.   (A)~DAG containing the 18 first selected edges with \textit{G1DBN} with LS estimates (PPV=60\%).
Colored nodes represent the TFs and the dark blue edges are validated by the Yeastract database. (B)~Percentage of validated edges out of the first 5 to 1000 edges  inferred with the \textit{G1DBN} procedure, after Step~2 or after Step 1 only, the shrinkage or the lasso procedure. The dashed line shows the proportion of validated edges out of the 786$\times$18 possible edges. }
\label{fig:res18TF}
\end{figure}

%\paragraph
%\subsubsection
\paragraph{Spellman's Yeast cell cycle data set}\label{ssec:appliLevure}
%\vspace{-0.1cm}
We performed  dynamic network inference from the \textit{Saccharomyces  cerevisiae}  cell cycle data collected by Spellman et al.  \cite{Spellman98}. 
We used the $\alpha$~Factor-based synchronization data (18 time points) and we focus here on a set of 786 genes  which demonstrated  consistent  periodic  changes in transcription level (See Supplementary Material~\cite{SuppMat}, Section D.1
%\ref{sapp:yeastdata} 
for more details).  
\newpage
We carried out two surveys on this dataset. First, we allow only a subset of 18 genes\footnote{The 18 genes code for proteins ACE2, FKH1, FKH2, GAT3, MBP1, MCM1, MIG2, NDD1, PHD1, RAP1, RME1, STB1, SUT1, SWI4, SWI5, SWI6, TEC1 and YOX1. consist of the overlap between the 786 genes under study and the 50 genes identified as putative TFs in a recent study by Tsai et al. \cite{Tsai05}.} identified as putative TFs to be possible parent genes (\ie to have edges pointing out towards other genes in DAG $\Gmin$) and look for their target genes.   Then we extend the search for parent genes to the whole dataset of 786 genes in a second survey. 
% (TFs) to be implicated in the yeast cell cycle transcription regulation. The overlap with the 786 genes under study consists of the 18 genes
% We only  allow the  9 identified  Transcription Factors  (ACE2, FKH1,
%FKH2, MBP1,  MCM1, NDD1, SWI4, SWI5,  SWI6) that have been  identified to have
%roles  in regulating  transcription of  yeast genes  \cite{Simon01} to  be the
%possible regulators
%We use the prior knowledge on the identity of Transcriptor Factors (TFs)
%to limit the potential regulators of each gene in the dataset
%In both cases, 
%we computed the score S1, \ie the maximal $p$-value for first order conditional independence, for each edge   (Step 1). Then 
%before computing the final score S2, 
We set $\alpha_1$ threshold  for the  \textit{G1DBN} procedure  according to guidelines  detailed in Supplementary Material~\cite{SuppMat}, Section~B % \ref{ssec:alpha1heur}
($\alpha_1=~0.1$ for the 18 TF-survey,  $\alpha_1=0.05$ for the  786 TF-survey). 
%%!%%The edges whose score is below $\alpha_1$  stand for the edges of DAG $\gun$, thus describing the  first order conditional dependencies.  Second, for each gene $i$, we consider the regression model defined by equation  (\ref{eq:aijpa}) where the predictors are the parents of gene $i$ in DAG $\gun$. We compute the p-value, $p_{ij}^{(2)}$,  for each regression coefficient (Step 2),  the score S2 of the edges of the full order conditional dependencies DAG $\Gmin$. 

It is somehow difficult to assert the validity of the results obtained from real data as the whole regulatory machinery is not known yet. However the yeast cell cycle has been studied a lot and many regulation relationships have been recovered. 
We study the consistency of the first inferred edges with annotations in the 
%The validation of the edges is obtained from the
  \textit{Yeastract}  database \cite{yeastract}, a curated repository currently listing %more than 30990
 found regulatory associations  between TFs  and target genes in \textit{S. cerevisiae}.

%\vspace{-0.5cm}
%\newpage

In the 18 TF-survey,  the first few selected edges are biologically validated. In the DAG comprising the 18 first selected edges (Figure \ref{fig:res18TF}A), 11 edges refer to identified regulatory relationships (thick blue edges).
%, where the blue edges are  validated by Yeastract.  % with a power equal to 2\%. 
The first detected TFs are the genes coding for proteins FKH2, NDD1, RAP1 and SWI4. In particular, the proteins FKH2 (known as a TF with a major role in the expression of G2/M phase genes) and SWI4 (TF regulating late G1-specific transcription of targets) are pointed out as being essential TFs; they have the most target genes and the high majority ($73 \%$) of these regulatory relationships  is listed in Yeastract.

%%graph%% derniere figure supprimee
\begin{figure}
 % \centeri
%\hspace{-2cm}\includegraphics[height=22cm,width=20cm]{ myGraphValid18TFfdr.ps} %_1_154.ps}
%%graph
% \vspace{-2cm}
 %%graph 
% \vspace{-4cm} \hspace{-6.5cm}\includegraphics[height=30cm]{ soLastDarkGrey.ps}
 \vspace{-6cm} 
 
 \hspace{-4.2cm}\includegraphics[width=23cm,height=27.8cm]{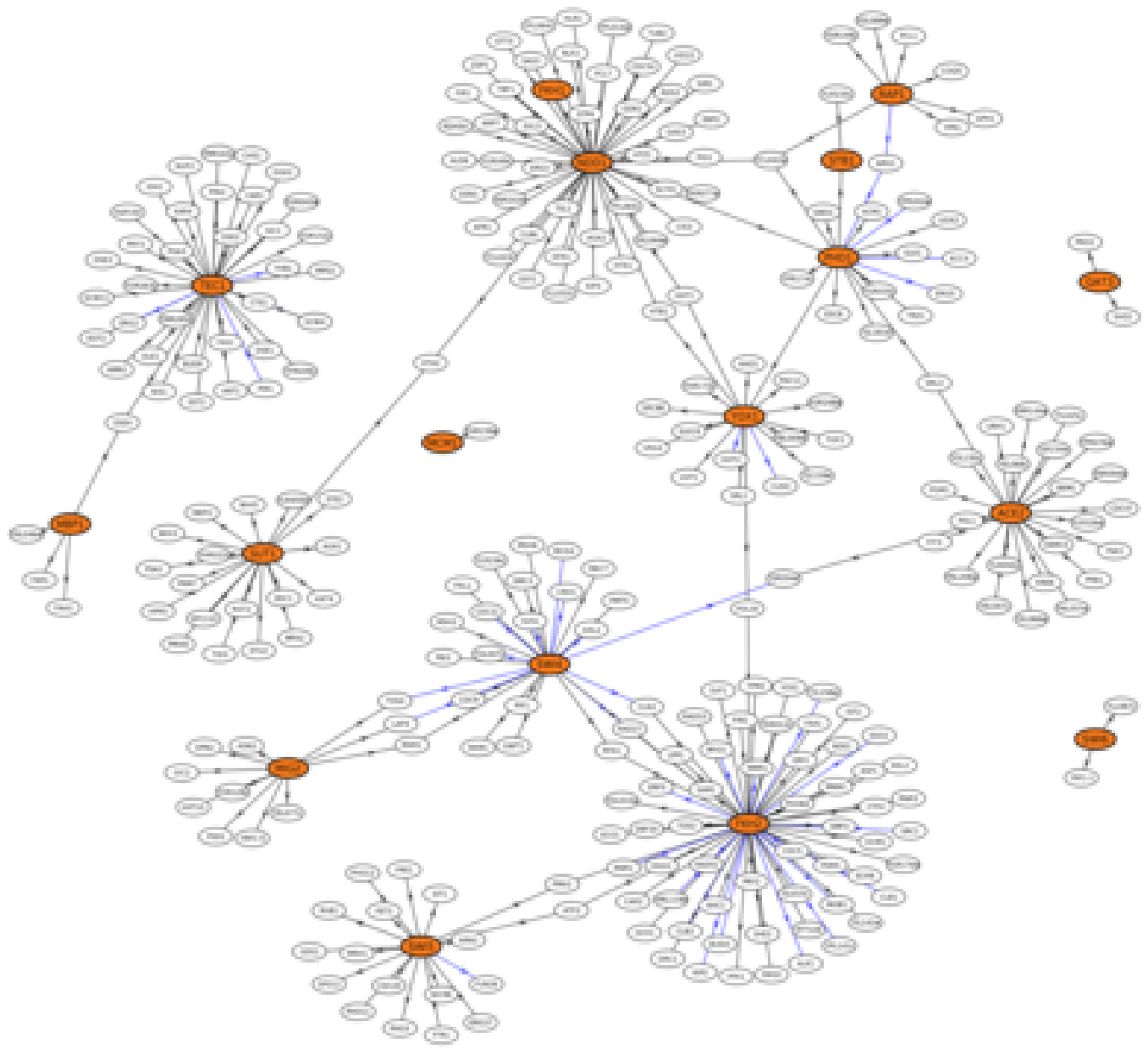} %soLastDarkGrey.ps}
%%%\vspace{-5.9cm}
  \vspace{-1cm}
  
  \caption{\ DAG inferred by \textit{G1DBN} with LS estimates, using $\alpha_1=0.1$,\ \  $\alpha_2\!=\!0.0059$ (ensuring FDR$<0.01$), in the 18 TF-survey of the \textit{S. cerevisiae} cell cycle. The 17 colored nodes represent the 16 TFs selected as parent node out of the 18 TFs under study, plus node FKH1 which is selected as a target of NDD1. The dark blue edges are validated by Yeastract. This network contains $286$ genes and  $308$ edges. See the complete edges list in Supp. Material \cite{SuppMat}.
  }
  \label{fig:dagFDR18TF}
\end{figure}

%\vspace{-3cm}

As introduced in Section \ref{ssec:choiceThres}, we chose $\alpha_2$ threshold in order to keep the False Discovery Rate (FDR) smaller than 1\% with the approach by Benjamini and Hochberg \cite{Benjamini95}. This  lead to $\alpha_2=0.0059$.
The corresponding inferred DAG is shown in Figure \ref{fig:dagFDR18TF}.
The two proteins FKH2 and SWI4 are still part of the TFs having the most targets, together with NDD1, which is  an essential component of the activation of the expression of a set of late-S-phase-specific genes and TEC1, a transcription factor required for full Ty1 expression and Ty1-mediated gene activation (Ty transposable-element own for causing cell-type-dependent activation of adjacent-gene expression). 
The set of selected TFs is listed %in the table 1% \ref{tab:TFdagFDR18TF}
 in Supplementary Material \cite{SuppMat}, Section  D.2%\ref{sapp:yeasttab}
, Table 1%\ref{tab:TFdagFDR18TF}
, 
where the third column indicates the number of validated edges out of the selected ones. Except for NDD1, for which no target gene is listed in yeastract, one forth of the targets genes of the top four TFs are validated. 
%Finally,
\newpage
%For a broader point of view, 
For a comparative overview, the histogram  of  Figure \ref{fig:res18TF}B displays the percentage of validated edges out of the first 5 to 1000 selected edges  inferred with each inference procedure
%% the \textit{G1DBN} procedure, Step 1 and Step 2, 
%our $1^{st}$ order partial independence inference procedure 
%for  the three estimates LS, Tukey and Huber as exposed in the previous section. We also
%inferred a DAG 
%%the shrinkage approach by Opgen-Rhein  and Strimmer, and the Lasso 
When considering the 1000 first inferred edges, the results are very  similar to what could be expected by chance only. Note that, as the Step 2 of G1DBN choose 308 edges only, it is not considered when comparing the 1000 first edges.
 % and the targets of the cell cycle activators identified by Simon et al.  \cite{Simon01}.

%\newpage

In the second survey including all the 786 genes as putative TFs, the dimension is far higher and the results are consequently more restricted. 
Indeed, %in the PR curves of the right panel of Figure \ref{fig:PRyeast}, the PPV 
the proportion of validated edges doesn't exceed $12.5 \%$, obtained  with the 2nd step of \textit{G1DBN} procedure among the first selected edges. However, this is still a subtantial result as compared with the proportion of validated edges (equal to $0.26\%$).
In order to keep the FDR smaller than 0.01, we chose $\alpha_2=0.0067$  by following the Benjamini and Hochberg approach \cite{Benjamini95}. The inferred DAG for the 786 TF-survey contains $437$ genes and $380$ edges. The display of this DAG, as well as the list of its edges and the list of the genes selected as TFs, is available in Supplementary Material \cite{SuppMat}.%, Section D.3.

%\paragraph
%\subsubsection
\vspace{0.3cm}
\paragraph{Diurnal cycle on the starch metabolism of \em{A. Thaliana}} %rabidopsis %\label{ssec:appliArth}
We  applied the  \textit{G1DBN} inference procedure to the expression time series data generated by Smith et al. \cite{Smith04} to investigate the impact of the diurnal cycle on the starch metabolism of \textit{Arabidopsis Thaliana}. We restricted our study to the 800 genes selected by Opgen-Rhein and Strimmer \cite{OpgenStrimmer07} as having periodic expression profiles\footnote{The data are available in the GeneNet R package  at                        \texttt{http://strimmerlab.org/} \texttt{software/genenet/html/ar}   \texttt{th800.html} or in our R package \textit{G1DBN} (arth800line).}.
%%\newpage

\begin{figure}
 % \centering
%\includegraphics[height=23cm,width=18cm]{ VAR168Gmin2.ps}
%\includegraphics[height=23cm,width=18cm]{ myGraphArth_010_192.ps}
% \hspace{-1.2cm} \includegraphics[height=23cm,width=18cm]{ myGraphArth800fdr.ps}

%%graph
%%% \vspace{-4cm}
 \vspace{-1.8cm}

%%graph 
%%\hspace{-6cm} \includegraphics[height=28cm]{ so3.ps}
\hspace{-5cm} \includegraphics[height=23.4cm]{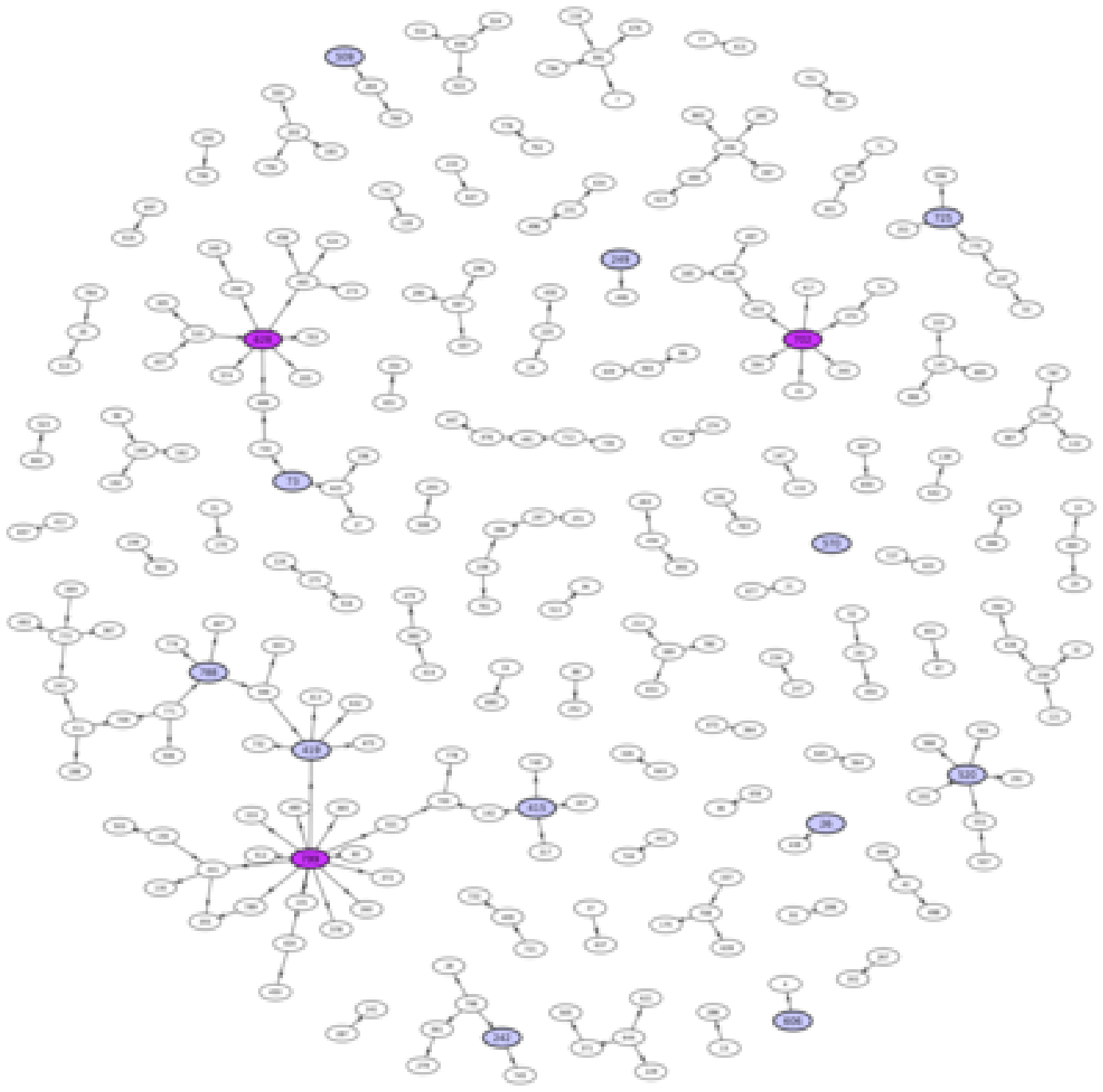} %so3.ps}
 \vspace{-1cm}

  \caption{DAG inferred  with \textit{G1DBN} from the data by Smith et al. $\!\!$ \cite{Smith04} in order to investigate starch metabolism of \textit{A. thaliana}  $\!\!$ (LS estimates, $ \alpha_1=0.1$, \newline $\alpha_2\!\!=\!\!0.005$ such that   FDR$<\!0.01\!$). $\!\!$ The dark colored nodes are the 3 nodes with the most targets, 2 out of them are known for being implicated in starch metabolism. The light colored nodes are parent nodes already identified as TF or DNA binding protein (See  Supp. Material\cite{SuppMat}, Section E, Table 2). This network contains  277 genes and 206 edges. See the edges list in Supp. Material.}
  \label{fig:dagarth}
\end{figure}

%%\newpage
Using the heuristic approach detailed in Supplementary Material \cite{SuppMat}, Section B% \ref{ssec:alpha1heur}
, we choose  threshold $\alpha_1=0.02$  allowing the distribution of  the number of parents in the DAG $\gun$ having the number of 0-parent genes to dominate and the number of 1-parent genes to be half as large.
% the number of parents ind DAG $\gun$ to be mostly 0 or 1
%in order to maximize the chance that $\gun$ contains the true DAG $\Gmin$.
We set $\alpha_2=0.005$  in order to maintain the False Discovery Rate smaller than $0.01$ by using the approach by Benjamini and Hochberg \cite{Benjamini95} (See Section \ref{ssec:choiceThres} for details).  We recover the  DAG in Figure  \ref{fig:dagarth} which has  a ``hub'' connectivity structure. This network contains $206$ edges implicating $277$ different genes.
%The number of edges is of the order of the number of nodes. 
%Out of the $236$ genes, 100 genes are selected as TFs, 159 as targets and 23 as both parent and target. 
We may notice that this DAG differs from  the one inferred  by Opgen-Rhein and
Strimmer~\cite{OpgenStrimmer07}. However the edges selected by the three inference procedures discussed in this section differ somewhat  %Then these various approaches might  be somehow complementary.
 (See  the proportion of edges  in common by using the various inference approaches in Supplementary Material~\cite{SuppMat}, Section C% \ref{app:compEdges}
 ) and may, in fact, yield complementary information or insights.

\newpage

%One striking fact is that node $799$ (DPE2 (DISPROPORTIONATING ENZYME 2) has 14 target genes: 
Among the `parent' nodes in the inferred DAG displayed in Figure \ref{fig:dagarth}, two nodes ($799$ and $628$) out of the three
having the most target refers to proteins that are  known to be implicated in 
starch metabolism.
%The gene having the most 'children' ($11$ child) in the inferred network, 
Indeed,  node~$799$,  which has  $14$ `target' nodes,  refers  to DPE2
(DISPROPORTIONATING  ENZYME~2),  which is an essential  component of  the  pathway from
starch to sucrose and cellular metabolism in plant leaves at night. % \cite{Smith04}.
Node  $628$ ($6$~targets)  is a  transferase  (At5g24300)
implicated  in the starch  synthase. Node  $702$, which  is an  unknown protein
(At5g58220),  has  also  $6$  targets.   These  three  nodes  are
dark-colored  in the DAG  of Figure  \ref{fig:dagarth}.
%DPE2 is not known as a TF, however,
Note that there is no prior knowledge regarding the role of each gene (TF or target) in this survey. As a consequence, some edges  might be inferred wrong way around\footnote{In particular if some assumption of the model is not satisfied. For instance if an essential TF is missing or if the regulation is not transcriptional, \ie does not depend on the amount of mRNA coding for the protein.}. Thus node~$799$, which is a gene coding for  an enzyme (DPE2), is most probably not a TF for its 14 apparent target genes.
% for the 14 genes towards which an edge has been inferred. %an edge is pointing out,
However node $799$ is still the gene whose expression level best explains the expression of the 14 genes. Consequently these genes might be implicated in the same pathway as DPE2.
 The  remaining parent nodes have from $1$ to $4$ targets.  Among them, 9 genes, which are listed in Supplementary Material \cite{SuppMat}, Section E, Table 2%\ref{tab:4tfs}
, have already been  identified as TFs
%(nodes 73 788) 
or as DNA binding proteins. %(242 606 725). 
These 9 nodes are light-colored in the displayed DAG. 
Finally a list of $37$ unknown proteins have been selected as parents in the inferred
DAG. Potentially implicated in the regulation machinery of starch metabolism,  these proteins represent a subset of genes which is relevant for further analyses.
%...The proteins selected as parents which are not TF like DPE 2 may be wrong way around, but essential part of the process...
% The list of these $37$ proteins as well as the complete list of the $122$ genes inferred as parents in this DAG are available online at \cite{SuppInfo}. 
%(see Supplementary Information). 
See  more details on the inferred network displayed in Figure~\ref{fig:dagarth} 
%describing conditional dependencies between gene expression levels during starch metabolism 
in the Supplementary  Material \cite{SuppMat}. 
%%!%%In particular, the description of the 800 genes, the list  of the 37 unknown  proteins selected as parents  in  the inferred  DAG, the  list  of  the parent  nodes according to their number of target nodes and the list of the edges  ordered by  decreasing significance % and  by increasing past node number
%%!%%are listed. 

%% For a threshold $\alpha_1=S_1(i,j) <0.005$,  we obtain a DAG $\gunhat$ with 113
%% edges. 
%% The number of edges (113) is of the order of the number of nodes (165).
%% Among the nodes, 73 nodes are parents, 109 are child.% 7 are both parent and child. 

%%\newpage
%\mbox{}
\section{Discussion and conclusion}

As more and more gene expression time series has become available, the need for efficient tools to analyze such data has become imperative.
In  this paper,  we first  determine sufficient  conditions for  Dynamic Bayesian Network modelling  of  gene  expression time
series. This type of modelling  offers a straightforward interpretation: the edges of the DAG $\Gmin$ defining the DBN exactly describe  the set of conditional dependencies
between  successive gene expression  levels. Having defined and characterized  low order
conditional dependence DAGs for DBNs, %dynamic networks,
%Such low order conditional dependence DAGs 
we point out relevant characteristics for the approximation of sparse DAGs.  In particular, under faithfulness assumption,  DAG $\Gmin$ is included in the $1^{st}$ order conditional dependence DAG $\gun$. 

From these  results, we develop  \textit{G1DBN},  a  novel procedure  for DBN inference, %dynamic genetic networks  
which makes  it possible  to tackle the  `small $n$,  large $p$'
estimation case that  occurs with genetic time series data. %can be used when the number of observations is very small comparative to the number of genes. 
 Based  on the consideration of low  order conditional dependencies, the \textit{G1DBN} procedure proved to be powerful  on both simulated and real data analysis. 
%In particular, this approach outperforms previous  model selection approaches using shrinkage or lasso estimates, respectively, in terms of power and PPV on simulated data. %,  in the analyses introduced in this paper.
With respect to other methods, the shrinkage  approach considerably improves  the  precision of  the overall
estimation of the partial correlation coefficients when the number of observations $n$ is small compared to the number of genes $p$.
  %(with respect to standard methods).
 However, considering $1^{st}$ order conditional independence proved to be more efficient for DBN inference in terms of power  and PPV on simulated data, and gave promising results on real data analysis. 
As for the lasso,  one might notice that a drawback lies in the fact  that the edge selection
is done vertex by vertex whereas the DAG $\Gmin$ is globally sparse but not
uniformally.  As a consequence,  the lasso tends to  uniformally reduce
the number of  parents of each vertex instead of only  keeping the total
number of edges contained.

The power of the  \textit{G1DBN} procedure  comes  from  the accuracy improvement of the testing made possible by the dimension reduction.  Indeed, as the first step selection is based  on the $1^{st}$ order  conditional independence  consideration,  significance tests are performed in a model of dimension 4 (See Section~\ref{ssec:g1hat}). This represents a drastic dimension reduction compared to full order independence testing and makes the testing much more accurate.  
Thus, even if  there are more edges  in the DAG  $\gun$ than in the  true DAG $\Gmin$ (Proposition~\ref{prop:GminInGq}),  Step~1 of the procedure  is already very predictive. 
%Step 1 PR curve is similar to the shrinkage PR curve and shows much higher PPV  than the lasso.

%Regarding the results of the inference procedures used in this paper, 
Throughout the analyses performed for this paper, we point out two major directions for further research.
On the one hand,  we noticed that the edges selected by the three inference procedures differ somewhat (See Supplementary Material \cite{SuppMat}, Section C). % \ref{app:compEdges})
A further relevant study would consist of  analyzing in which way these DBN inference procedures could  have different strenghts and may be complementary. 
%and on the yeast cell cycle regulation relationships detection. 
On the other hand, the use of  robust  estimators like Huber or Tukey bisquare did not allow a noticeable change of the inference approach on real data.
%   very efficient for the detection of the edges. 
%The two selection thresholds of the procedure allow to balance the power
%and the specificity of the estimation.  
Another interesting survey lies in the investigation of which measures of dependence, like  non linear or other robust estimates, are the more pertinent  to  analyze gene expression data.

% \section*{Acknowledgments}
% I would like  to thank Catherine Matias and Bernard  Prum for many stimulating
% and constructive discussions on this  work. I also thank Michael Stumpf for his relevant advice and Thomas Thorne for  the illustration of the networks. Finally, I thank the referees for their comments and suggestions which contributed to great improvement of this paper.

% \vspace{2cm}
\newpage
\noindent \textbf{\Large{APPENDIX}}
%\paragraph{}
\appendix
%\small

\section{Proofs}\label{app:proofs}

\subsection{Lemmas \ref{lem:Gfull}  to  \ref{lem:ijIndep}  and proofs}\label{sapp:Lemmas}
\begin{Lem}\label{lem:Gfull}
  Under  Assumptions   \ref{ass:m1}  and  \ref{ass:cindep},   the  probability
  distribution $\proba$ admits  a DBN representation according to  a DAG whose
  edges only join nodes representing variables observed at two successive time points, at least according to
  DAG $\Gfull = ( X , \{(X^j_{t-1}, X^i_t)\}_{i,j \in \p, t>1} )$ which has edges
  between any pair of successive variables.
\end{Lem}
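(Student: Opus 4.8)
The plan is to verify the factorization criterion of Proposition \ref{Prop:BNrep} directly for the DAG $\Gfull$. Since the vertex set of $\Gfull$ is $X = \{X^i_t; i \in \p, t \in \n\}$ and the parents of a vertex $X^i_t$ for $t>1$ are exactly $\text{pa}(X^i_t,\Gfull) = X_{t-1} = \{X^j_{t-1}; j \in \p\}$ (while $X^i_1$ has no parents), it suffices to show
$$f(X) = \prod_{i \in \p} f(X^i_1) \cdot \prod_{t>1} \prod_{i \in \p} f(X^i_t \mid X_{t-1}).$$
First I would split this into two independent claims that together give the displayed factorization: (i) a \emph{temporal} factorization $f(X) = f(X_1) \prod_{t>1} f(X_t \mid X_{t-1})$, and (ii) a \emph{within-slice} factorization $f(X_t \mid X_{t-1}) = \prod_{i \in \p} f(X^i_t \mid X_{t-1})$ for each $t>1$, together with an arbitrary factorization $f(X_1) = \prod_i f(X^i_1 \mid X^{1:i-1}_1)$ of the initial slice (which is harmless since $\Gfull$ may be taken to contain whatever edges within time $1$ are needed, or one simply notes the statement only constrains edges between successive times).

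For claim (i) I would start from the chain-rule decomposition $f(X) = f(X_1) \prod_{t>1} f(X_t \mid X_{1:t-1})$, which holds for any density, and then use Assumption \ref{ass:m1} (first-order Markov): $X_t \indep X_{1:t-2} \mid X_{t-1}$ implies $f(X_t \mid X_{1:t-1}) = f(X_t \mid X_{t-1})$ for $t \geq 3$, and the case $t=2$ is trivial since $X_{1:t-1} = X_1 = X_{t-1}$. For claim (ii) I would fix $t>1$ and apply the chain rule again within the slice, $f(X_t \mid X_{t-1}) = \prod_{i \in \p} f(X^i_t \mid X^{1:i-1}_t, X_{t-1})$, and then invoke Assumption \ref{ass:cindep}, which (generalized to disjoint sets, as noted after the definition of conditional independence) gives that the $\{X^i_t\}_{i \in \p}$ are mutually conditionally independent given $X_{1:t-1}$; combined with the Markov property this yields $f(X^i_t \mid X^{1:i-1}_t, X_{t-1}) = f(X^i_t \mid X_{t-1})$. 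Multiplying (i) and (ii) gives exactly the factorization required by Proposition \ref{Prop:BNrep} for $\Gfull$, and acyclicity of $\Gfull$ is immediate because every edge points from time $t-1$ to time $t$.

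The main obstacle is the careful handling of Assumption \ref{ass:cindep}: as stated it asserts only \emph{pairwise} conditional independence $X^i_t \indep X^j_t \mid X_{1:t-1}$, whereas claim (ii) needs the stronger \emph{mutual} (or block) conditional independence of all $p$ variables in the slice given the past. Bridging this gap is where I would be most careful — either by appealing to the remark in the text that "this result generalizes to disjoint sets of variables" (reading Assumption \ref{ass:cindep} as the block statement $X^i_t \indep X^{\p \setminus \{i\}}_t \mid X_{1:t-1}$), or, if only the literal pairwise version is available, by an induction on $i$ using the Markov property to absorb the conditioning on $X^{1:i-1}_t$ one variable at a time. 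A secondary minor point is the $t=2$ boundary case in the Markov step and the (immaterial) freedom in how $f(X_1)$ is factored; neither causes real difficulty.
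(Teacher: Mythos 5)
Your proposal follows essentially the same route as the paper's proof: the temporal factorization $f(X)=f(X_1)\prod_{t>1} f(X_t\mid X_{t-1})$ from Assumption \ref{ass:m1}, the within-slice factorization $f(X_t\mid X_{t-1})=\prod_{i\in\p} f(X^i_t\mid X_{t-1})$ from Assumption \ref{ass:cindep}, and then Proposition \ref{Prop:BNrep} applied to $\Gfull$. Your caution about pairwise versus mutual (block) conditional independence is a point the paper glosses over by asserting the product form directly, so reading Assumption \ref{ass:cindep} as the block statement, as you propose, is exactly what the paper's own proof implicitly does.
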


\begin{proof}[\textbf{Proof of Lemma \ref{lem:Gfull}}]
From assumption \ref{ass:m1}, the density $f$
of the joint probability distribution of process~$X$  be written as the product of conditional densities,
\begin{equation}\label{eq:m1}
f(X)= f(X_1) \ \prod_{t =2}^n f (X_t \vert X_{t-1}),
\end{equation}

\noindent where $  f (X_t \vert X_{t-1})$ refers  to the  density
of the conditional probability distribution 
of~$X_t$ given $X_{t-1}$.

From Assumption \ref{ass:cindep}, for all $t>1$, the conditional density
$f (X_t  \vert X_{t-1})$ can be written as  the product of the  conditional density of
each variable  $X^i_t$ given  the set of  variables $X_{t-1}$ observed  at the
previous time,
%factorizes as follows,
\begin{equation}\label{eq:fact}
 f (X_t \vert X_{t-1}) = \prod_ {i \in \p} f (X^i_t \vert X_{t-1}). 
\end{equation}

%Thus the conditional density $f (X_t \vert X_{t-1})$ writes as the product of conditional densities $f (X^i_t \vert
%X_{t-1})$ of the variable $X^i_t$ given the set of variables $X_{t-1}$ for all $i$ in $\p$. 
%Finally, under Assumptions \ref{ass:m1} and \ref{ass:cindep},

From equations (\ref{eq:m1}) and (\ref{eq:fact}), the density $f$ writes as the product of the conditional density of
each variable $X^i_t$ given its parents in $\Gfull$. %factorizes as follows,
%% \begin{equation}\label{eq:fact}
%% f(X)= f(X_1) \ \prod_{t=2}^n \prod_{i \in \p} f(X^i_t \vert
%% X_{t-1}).
%% \end{equation}
%The probability distribution $\proba$ 
%\noindent that is $f$ factorizes according to the DAG $\Gfull$. 
From Proposition \ref{Prop:BNrep}, the probability distribution $\proba$ admits a BN representation
according to $\Gfull$. %\cite{Pearl88}
% of the probability distribution $\proba$, which
% is defined by the DAG $\G$, the initial density $f(X_1)$ and the set of conditional densities 
%  $f(X^i_t \vert pa(X_{t-1},\G))$ for all $i$ in $\p$ and for all $t>1$. 
\end{proof}\\

\begin{Lem}\label{lem:interGraph}
Assume  the joint probability  distribution $\proba$ of
process $X$ has density~$f$ with respect to Lebesgue measure on $\R^{p \times n}$. 
If $\proba$ factorizes according to two
different subgraphs of $\Gfull$, $\G_1$ and $\G_2$, then $\proba$ factorizes according to $\G_1 \cap \G_2$. 
\end{Lem}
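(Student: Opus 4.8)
The plan is to work directly with the factorization property and exploit the special ``bipartite in time'' structure of subgraphs of $\Gfull$: any such subgraph $\G$ has all its edges of the form $(X^j_{t-1},X^i_t)$, so the parent set $\text{pa}(X^i_t,\G)$ is always a subset of $X_{t-1}$, and the vertices $X_1$ have no parents. Consequently, saying that $\proba$ factorizes according to $\G$ is equivalent to saying that, for every $t>1$ and every $i$, the conditional density satisfies $f(X^i_t\mid X_{t-1}) = f(X^i_t\mid \text{pa}(X^i_t,\G))$, i.e.\ that $X^i_t$ is conditionally independent of $X_{t-1}\setminus\text{pa}(X^i_t,\G)$ given $\text{pa}(X^i_t,\G)$. (This reduction uses Lemma \ref{lem:Gfull}: since $\proba$ already factorizes according to $\Gfull$, we have $f(X)=f(X_1)\prod_{t\geq2}\prod_i f(X^i_t\mid X_{t-1})$, and a further factorization according to a subgraph $\G$ just refines each factor $f(X^i_t\mid X_{t-1})$ to $f(X^i_t\mid \text{pa}(X^i_t,\G))$.)

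The key step is then a pointwise statement about conditional independence: if a random variable $U$ (here $X^i_t$) is conditionally independent of a set $W$ (here $X_{t-1}$, minus the already-removed variables) given $S_1$, and also given $S_2$, with $S_1,S_2\subseteq W$, then $U$ is conditionally independent of $W$ given $S_1\cap S_2$. The way I would establish this is to note that $f(X^i_t\mid X_{t-1})$ depends on $X_{t-1}$ only through the coordinates in $\text{pa}(X^i_t,\G_1)$ (from the $\G_1$-factorization) and also only through the coordinates in $\text{pa}(X^i_t,\G_2)$ (from the $\G_2$-factorization); a function of $X_{t-1}$ that depends only on the coordinates in a set $S_1$ and also only on the coordinates in a set $S_2$ in fact depends only on the coordinates in $S_1\cap S_2$. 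Applying this for every $i$ and every $t>1$ gives $f(X^i_t\mid X_{t-1}) = f(X^i_t\mid \text{pa}(X^i_t,\G_1)\cap\text{pa}(X^i_t,\G_2)) = f(X^i_t\mid \text{pa}(X^i_t,\G_1\cap\G_2))$, where the last equality holds because the parent set of $X^i_t$ in $\G_1\cap\G_2$ is exactly $\text{pa}(X^i_t,\G_1)\cap\text{pa}(X^i_t,\G_2)$ (edges of an intersection graph are the common edges). Substituting back into the product expansion of $f(X)$ yields the factorization according to $\G_1\cap\G_2$, hence by Proposition \ref{Prop:BNrep} a BN (DBN) representation according to $\G_1\cap\G_2$.

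The main obstacle is making rigorous the elementary-looking claim ``a density that, as a function of $X_{t-1}$, factors through the projection onto coordinates $S_1$ and also through the projection onto coordinates $S_2$ must factor through the projection onto $S_1\cap S_2$.'' This needs a little care because we are dealing with densities defined only up to null sets, so the equalities $f(X^i_t\mid X_{t-1})=g_1(X^i_t,X^{S_1}_{t-1})=g_2(X^i_t,X^{S_2}_{t-1})$ hold only $\proba$-almost everywhere; one has to argue, for instance by integrating out the coordinates in $S_1\setminus S_2$ and in $S_2\setminus S_1$ against their (conditional) marginals and using that $\proba$ has a density with respect to Lebesgue measure on $\R^{p\times n}$ (which guarantees the relevant conditional densities exist and the marginals have full support in the sense needed), that the common value cannot genuinely depend on coordinates outside $S_1\cap S_2$. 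Everything else — the reduction to per-vertex conditional densities, the identification of parents in the intersection graph, and the reassembly of the global factorization — is routine bookkeeping once this lemma about ``overlapping sufficient coordinate sets'' is in place.
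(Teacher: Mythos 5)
Your plan is correct and follows essentially the same route as the paper's own proof: reduce to the per-vertex statement that $f(X^i_t\mid X_{t-1})$ coincides with the conditional density given $\text{pa}(X^i_t,\G_1)\cap\text{pa}(X^i_t,\G_2)$, and establish it by equating the two conditional densities and integrating out the coordinates outside the intersection against the appropriate marginal, exactly as the paper does with its $g^{i,pa_1}$, $g^{i,pa_2}$ computation. The measure-theoretic caveat you flag (a.e.-defined densities) is handled in the paper by the same marginalization device you propose, so no substantive difference remains.
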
 
%See Proof in Appendix. % \ref{app:interGraph}. 
% Note that this  result strongly depends on the existence  of density $f$ with
% respect to Lebesgue measure on $\R^{p \times n}$.

%\textcolor{blue}
\textit{\textbf{From  Lemma \ref{lem:interGraph}, it is straightforward that, among the  DAGs included in $\Gfull$, there exists   a minimal DAG  (denoted by $\Gmin$  in the paper) according to which the probability distribution $\proba$ factorizes, thus establishing a BN~representation of process~$X$.}}

\paragraph{}
%\section{Proof of Lemma \ref{lem:interGraph}}\label{app:interGraph}
\begin{proof}[\textbf{Proof of Lemma \ref{lem:interGraph}}\label{app:interGraph}]
%$$ \G_{\textit{full}}:=(X, \{(X^j_{t-1},X^i_t)\}_{i,j \in \p, t \in \n} ). $$
% \textbf{Proposition    3}    \textit{If    the    density    $f$    factorizes
%   recursively according to $\G_1$ and $\G_2$, two different subgraphs of
% $\Gfull$, then $f$ factorizes according to $\G_1 \cap \G_2$.} 
%\paragraph{}
Consider a discrete-time stochastic process 
$X \!=\! \{X^i_t;\ i \in~\p,$ $  t \in \n \}$
whose joint probability $\proba$ distribution has the density $f$ with respect to
Lebesgue measure on $\R^{p \times n}$. 

Let $\G_1$ and $\G_2$ be two different subgraphs of $\Gfull$ according to which
the joint probability distribution $\proba$ factorizes. 
Let $i \in \p$, $t \in \n$, we consider the random variable $X^i_t$. 

We denote as follows,
\vspace{-0.3cm}

\begin{itemize}
\item the following subsets of $\p$,

$pa_1 = \{j \in \p; X^j_{t-1} \in pa(X^i_t, \G_1) \}$

$\overline{pa}_1 = \p \bs \{pa_1\}$

$pa_2 = \{j \in \p; X^j_{t-1} \in pa(X^i_t, \G_2)\}$

$\overline{pa}_2 = \p \bs \{pa_2\}$

\item and the densities of  the joint or marginal probability distributions of
  $(X^i_t, X_{t-1})$,

$g : \R^{p+1} \rightarrow \R$  the density of the joint probability distribution of $(X^i_t, X_{t-1})$,

$g^i$ the density of the probability distribution of $X^i_t$,

$g^{\p}$ the density of the joint probability distribution of $(X_{t-1})$,

$g^{i,pa_1}$  the density of  the joint  probability distribution  of $(X^i_t,
X^{pa_1}_{t-1})$ where,

\hspace{1cm} $X^{pa_1}_{t-1}=pa(X^i_t, \G_1)$,

$g^{i,\overline{pa}_2}$ the density of the joint
probability   distribution   of  $(X^i_t,   X^{\overline{pa}_2}_{t-1}) $ where

\hspace{1cm} $ X^{\overline{pa}_2}_{t-1}= X_{t-1} \bs \{ pa(X^i_t,\G_2) \}) $,

etc... 

% $g : \R^{p+1} \rightarrow \R$  the density of the joint probability distribution of $(X^i_t, X_{t-1})$,

% $(y,x) \rightarrow (y, x)$

% $g^i(y)$ the density of the \textit{marginal ?} probability distribution of $X^i_t$,

% $g^{\p}(x)$ the density of the joint probability distribution of $(X_{t-1})$,

% $g^{i,pa_1}(y,x_{pa_1})$ the density of the joint probability distribution of $(X^i_t, X^{pa_1}_{t-1})$,

% $g^{i,\overline{pa}_2}(y,x_{\overline{pa}_2})$ the density of the joint
% probability distribution of $(X^i_t, X^{\overline{pa}_2}_{t-1})$,

% etc... 

\end{itemize}
\vspace{-0.3cm}

In         the        following,         $y         \in        \R$,
$x=(x_1, ..., x_p) \in \R^p$ and we denote by $x_{pa_1}= \{x_j;j \in~pa_1\} \in
\R^{\vert pa_1 \vert}$ 
(Thus $x=(x_{pa_1},x_{\overline{pa}_1})=(x_{pa_2},x_{\overline{pa}_2}) \in \R^p$). 
As the probability distribution $\proba$ factorizes according to $\G_1$, we derive from the DAG
theory the conditional independence,
\vspace{-0.2cm}

$$X^i_t \indep X^{\overline{pa}_1}_{t-1} \vert X^{pa_1}_{t-1},$$ 

\noindent that is,
\vspace{-0.3cm}

%$ \forall y \in \R, \forall x=(x_{pa_1},x_{\overline{pa}_1}) \in \R^p$,
% \ g(y \vert x)= g^{i,pa_1}(y \vert x_{pa_1}) $ that is 
$$\forall y \in \R, \forall x \in \R^p, \ \ \frac{g(y,x)}{g^{\p}(x)} =\frac{g^{i,pa_1}(y,x_{pa_1})}{g^{pa_1}(x_{pa_1})}.$$

% \begin{center}$g(y,x)=g(y,x_{\overline{pa}_1} \vert x_{pa_1})  g^{pa_1}(x_{pa_1})
% =g(y \vert x_{pa_1}) g(x_{\overline{pa}_1} \vert x_{pa_1}) g^{pa_1}(x_{pa_1}) 
% =g^{i,pa_1}(y,x_{pa_1})\frac{g^{\p}(x_{pa_1},x_{\overline{pa}_1})}{g^{pa_1}(x_{pa_1})}$. 
% \end{center}

Equivalent results derived from the factorization according to $\G_2$ gives,
\vspace{-0.2cm}

$$\forall y \in \R, x \in \R^p, \n \ g^{i,pa_2}(y,x_{pa_2}) = \frac{g^{i,pa_1}(y,x_{pa_1})}{g^{pa_1}(x_{pa_1})} g^{pa_2}(x_{pa_2}).$$
%$\int g^{i,pa_2}(y,x_{pa_2}) d(x_{pa_2 \cap \overline{pa}_1})= g^{i,pa_1 \cap \overline{pa}_2 }(y,x_{pa_1 \cap \overline{pa}_2})$ 
By taking  the integral  with respect to  $x_{pa_2 \cap  \overline{pa}_1}$, we
write for all $y \in \R$, for all $x_{pa_1 \cup pa_2} \in \R^{\vert pa_1 \cup pa_2 \vert }$,
\vspace{-0.3cm}

\begin{eqnarray*}
\int g^{i,pa_2}(y,x_{pa_2}) d(x_{pa_2 \cap \overline{pa}_1})
&=& \int \frac{g^{i,pa_1}(y,x_{pa_1})}{g^{pa_1}(x_{pa_1})} g^{pa_2}(x_{pa_2})
d(x_{pa_2 \cap \overline{pa}_1})\\
g^{i,pa_1 \cap pa_2 }(y,x_{pa_1 \cap pa_2}) &=& \frac{g^{i,pa_1}(y,x_{pa_1})}{g^{pa_1}(x_{pa_1})} g^{pa_1 \cap
pa_2}(x_{pa_1 \cap pa_2})
\end{eqnarray*} 

Finally we have,
$$\forall y \in \R, \forall x \in \R^p, \ \ \frac{g(y,x)}{g^{\p}(x)}=\frac{g^{i,pa_1 \cap pa_2}(y,x_{pa_1 \cap
pa_2})}{g^{pa_1 \cap pa_2 }(x_{pa_1 \cap pa_2})},$$
\noindent that is the conditional density of the probability distribution of $X^i_t$
given $X_{t-1}$ is the conditional  density of the  probability distribution  of $X^i_t$
given  $X^{pa_1  \cap   pa_2}_{t-1}$.  Then  $\proba$  factorizes
according to $\G_1 \cap \G_2$. 
\end{proof}

%\paragraph{}
\begin{Lem}\label{lem:ijIndep}\textbf{ (Conditional independence between non
    adjacent  successive variables) } Let $\G$ be a subgraph of $\Gfull$
  according  to  which  the  probability  distribution $\proba$  admits  a  BN
  representation.   For any pair  of successive  variables $(X^j_{t-1},X^i_t)$
  which are non adjacent in $\G$, we have
$$X^i_t \indep X^j_{t-1}\ \vert \ pa(X^i_t,\G) \ \text{ and }\ 
 X^i_t \indep X^j_{t-1}\ \vert \ pa(X^i_t,\G) \cup S,$$
for all $S$ subset of $\{X^k_u; k \in \p, u <t\}$. 
\end{Lem}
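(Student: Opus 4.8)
The plan is to deduce both conditional independence statements from the directed global Markov property (Proposition \ref{prop:gAnc}) applied to the graph $\G$. Since $\G$ is a subgraph of $\Gfull$, all of its edges go from variables observed at time $t-1$ to variables observed at time $t$; more precisely, every parent set $pa(X^i_t,\G)$ is contained in $X_{t-1}$, and any vertex $X^i_t$ has no children, since edges only point forward in time and $X^i_t$ has the largest time index among $\{X^k_u;k\in\p,u\le t\}$ that we are considering. First I would fix the pair $(X^j_{t-1},X^i_t)$ that is non-adjacent in $\G$, and consider the set $S_0 := pa(X^i_t,\G)$. I want to apply Proposition \ref{prop:gAnc} with $E=\{X^i_t\}$, $F=\{X^j_{t-1}\}$, and $S=S_0$, so I need to check that every path from $X^i_t$ to $X^j_{t-1}$ in the moral graph $(\G_{An(E\cup F\cup S)})^m$ meets $S_0$.

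The key observation is that in $\G$ the only edges incident to $X^i_t$ are the edges coming from its parents $pa(X^i_t,\G)$ (no outgoing edges, as noted). Therefore, after moralization — which only adds edges among the parents of a common child — the vertex $X^i_t$ in $(\G_{An(E\cup F\cup S)})^m$ is adjacent exactly to the vertices of $pa(X^i_t,\G)$: moralizing $\G$ can only marry parents of $X^i_t$ to each other (adding no new neighbor of $X^i_t$ itself), and moralizing at other vertices cannot create an edge incident to $X^i_t$. Since $X^j_{t-1}\notin pa(X^i_t,\G)$ (because the pair is non-adjacent), any path from $X^i_t$ to $X^j_{t-1}$ must leave $X^i_t$ through one of its neighbors, all of which lie in $S_0=pa(X^i_t,\G)$; hence every such path intersects $S_0$. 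Restricting attention to the ancestral set $An(E\cup F\cup S_0)$ and its moral graph does not change this argument, since removing vertices/edges only removes paths. By Proposition \ref{prop:gAnc}, $X^i_t\indep X^j_{t-1}\mid pa(X^i_t,\G)$.

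For the second statement, I would repeat the same argument with $S = pa(X^i_t,\G)\cup S'$ for an arbitrary $S'\subseteq\{X^k_u;k\in\p,u<t\}$. The neighborhood-of-$X^i_t$ observation is unchanged: $X^i_t$ is still adjacent only to $pa(X^i_t,\G)$ in the relevant moral graph (enlarging the ancestral set by adding earlier-time vertices and their ancestors, and moralizing, still cannot create an edge incident to the childless vertex $X^i_t$ other than marriages among its own parents). So again every path from $X^i_t$ to $X^j_{t-1}$ in $(\G_{An(E\cup F\cup S)})^m$ must pass through a neighbor of $X^i_t$, i.e.\ through $pa(X^i_t,\G)\subseteq S$, and Proposition \ref{prop:gAnc} gives $X^i_t\indep X^j_{t-1}\mid pa(X^i_t,\G)\cup S'$.

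The main obstacle, and the place I would be most careful, is the claim that moralization and passage to the ancestral subgraph never produce an edge incident to $X^i_t$ beyond its original parents — in other words, that $X^i_t$ really is a "sink" whose separator is exactly its parent set. This rests on the structural fact that $\G\subseteq\Gfull$ forces all edges to respect the time order and that $X^i_t$ has no children among the vertices under consideration; I would state this explicitly as the first step. Everything after that is a direct, essentially bookkeeping, application of Proposition \ref{prop:gAnc}.
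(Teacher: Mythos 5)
Your proof is correct and follows essentially the same route as the paper's: apply the directed global Markov property (Proposition \ref{prop:gAnc}) to the moral graph of the smallest ancestral set containing $X^i_t$, $X^j_{t-1}$ and $pa(X^i_t,\G)$ (plus $S$), noting that because $\G\subseteq\Gfull$ the parent set blocks every path from $X^j_{t-1}$ to $X^i_t$. Your explicit justification that $X^i_t$ is a sink in the ancestral subgraph, so moralization adds no neighbors to it beyond its parents, is exactly the detail the paper leaves implicit.
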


%%\noindent The proofs of  lemmas \ref{lem:interGraph} and \ref{lem:ijIndep}  are shown in Appendix \ref{app:proofs}. %\ref{app:proofijIndep}. 
As an illustration of Lemma \ref{lem:ijIndep}, assume $\proba$
admits  a BN  representation according  to  the DAG  of Figure  \ref{fig:motifDBN}C. 
There is no edge between $X^3_t$ and $X^1_{t+1}$ in this DAG.  
% Figure \ref{fig:moralAncG} exposes 
Now consider in Figure \ref{fig:moralG}B the moral graph of the smallest ancestral graph containing $X^3_t$,
$X^1_{t+1}$ and the parents $(X^1_t,X^2_t)$ of $X^1_{t+1}$. 
The set $(X^1_t,X^2_t)$ %of parents of $X^1_{t+1}$
  blocks all paths between $X^3_t$ and $X^1_{t+1}$. From Proposition \ref{prop:gAnc}, we have $X^1_{t+1}
\indep X^3_{t}\ \vert \ pa(X^1_{t+1},\G)$. 

\paragraph{}
%\section{Proof of Lemma  \ref{lem:ijIndep}}\label{app:proofijIndep}
\begin{proof}[\textbf{Proof of Lemma \ref{lem:ijIndep}}] Assume $\proba$ admits a BN representation according to $\G$, a
subgraph  of  $\Gfull$.  Let  $X^j_{t-1}$  and $X^i_t$  be  two  \textit{non
    adjacent}  vertices of  $\G$ (there  is no  edge between  them in  $\G$) and
  consider   the   moral  graph   $(\G_{An   \left  (X^i_t\cup   X^j_{t-1}\cup
      pa(X^i_t,\G) \right )} )^m$ of the smallest ancestral set containing the
  variables $ X^i_t$,  $X^j_{t-1} $ and the parents  $pa(X^i_t,\G)$ of $X^i_t$
  in $\G$.
As DAG $\G$ is a subgraph of $\Gfull$, 
% $X^j_{t-1}$ and $X^i_t$ are \textit{non adjacent}, 
the set of parents
$pa(X^i_t,\G)$ blocks all paths between $X^j_{t-1}$ and $X^i_t$ in the moral
graph $(\G_{An \left (X^i_t\cup X^j_{t-1}\cup pa(X^i_t,\G) \right )} )^m$. 
From Proposition \ref{prop:gAnc}, 
% $X^i_t$ and $X^j_{t-1}$are conditionally independent given $pa(X^i_t,\G)$ 
this establishes the conditional
independence $X^i_t \indep X^j_{t-1}\ \vert \ pa(X^i_t,\G)$.

This result  holds for the conditioning  according to any  subset $S \subseteq
\{X^k_u; k \in \p, u <t\}$. 
\end{proof}

%\vspace{-0.3cm}

%%%%%%%%%%%%%%%%%%%%%%%%
\subsection{Proof of Propositions \ref{prop:Gmin}, \ref{prop:G1},  \ref{prop:GminInGq} and  \ref{prop:NpaGq}}\label{sapp:proofProps}

\begin{proof}[\textbf{Proof of Proposition \ref{prop:Gmin}}]
First, we show that $\proba$ admits
a BN representation according to $\Gmin$. 
Let $i,j \in \p$ such that $ X^i_t \indep X^j_{t-1} \vert
X^{\pj}_{t-1}$, then we have, 
\vspace{-0.3cm}

$$f(X^i_t \vert X_{t-1})=f(X^i_t \vert X^{\pj}_{t-1}).$$ 
\noindent Under Assumptions \ref{ass:m1} and \ref{ass:cindep}, from Lemma
\ref{lem:Gfull} (See Appendix \ref{sapp:Lemmas}) and Prop. \ref{Prop:BNrep}, $\mathbb{P}$ admits
a BN representation according to the DAG $(X, E(\Gfull) \ \bs \ ( X^j_{t-1},
X^i_t))$ which has the edges of $\Gfull$ except for the edge $( X^j_{t-1},
X^i_t)$.
This holds for any pair of successive variables that are conditionally independent.

From Lemma \ref{lem:interGraph} (See Appendix \ref{sapp:Lemmas}), $\mathbb{P}$ admits
a BN representation according to the intersection of the DAG  $(X, E(\Gfull) \ \bs \ ( X^j_{t-1},
X^i_t))$ for any pair $( X^i_t, X^j_{t-1})$ such that $ X^i_t \indep X^j_{t-1} \vert
X^{\pj}_{t-1}$, that is DAG $\Gmin$. 

%$\Gmin = \cap \ \{(\Gfull \ \bs \ ( X^j_{t-1}, X^i_t)), \ X^i_t \indep X^j_{t-1} \vert
%X^{\pj}_{t-1} \}$. 

%\paragraph{}
Also, DAG $\Gmin$ cannot be reduced. %(No edge can be deleted from $\Gmin$. )
$\ $ Indeed, let $(X^l_{t-1},X^k_t)$ be an edge of $\Gmin$ \newline 
%(\ie $X^k_t \not\indep X^l_{t-1} \vert X^{\pl}_{t-1}$) 
and assume that $\proba$ admits a BN representation according to $\Gmin \bs
 (X^l_{t-1},X^k_t)$,
% Let $i,j \in \p$ such that $ X^i_t \not\indep X^j_{t-1} \vert
%X^{\pj}_{t-1}$. Assume  $\mathbb{P}$ admits  a BN representation  according to
%the DAG $(X, E(\Gmin) \bs (X^j_{t-1},X^i_t))$,
that is DAG $\Gmin$ with the edge $(X^l_{t-1},X^k_t)$ removed. 
From Lemma \ref{lem:ijIndep}  (Appendix~\ref{sapp:Lemmas}), we have $ X^k_t \indep X^l_{t-1} \vert
X^{\p_{l}}_{t-1}$, which contradicts $(X^l_{t-1},X^k_t) \in V(\Gmin)$
(\ie $X^k_t \not\indep X^l_{t-1} \vert X^{\p_{l}}_{t-1}$).  
%leads to a contradiction. 

\end{proof}

%% \section{(Old Proof of Proposition \ref{prop:Gmin})}\label{app:proofGmin}
%% \begin{enumerate}
%% \item  Assume the  conditional independence  $X^i_t  \indep X^j_{t-1}  \ \vert  \
%%   X^{P_j}_{t-1} $. Then the conditional density satisfies, 
%% $$f(X^i_t \vert X_{t-1})= f(X^i_t \vert X^{\pj}_{t-1}).$$
%% Thus $\proba$ factorizes recursively according to  a DAG having all the edges of
%% $\Gfull$ except for $(X^j_{t-1}, X^i_t)$. 
%% %$ \Gfull  \bs ( X^j_{t-1},X^i_t)$ or $(X,E(\Gfull)  \bs \{( X^j_{t-1},X^i_t)\})$.  
%% From Proposition
%% \ref{lem:interGraph}, $\Gmin$ is included in this DAG 
%% %graph and does not contain the edge $X^j_{t-1} \rightarrow X^i_t$. 
%% then $(X^j_{t-1}, X^i_t) \notin E(\Gmin)$.

%% \item Assume $(X^j_{t-1}, X^i_t) \notin E(\Gmin)$.
%% % the absence of edge $X^j_{t-1} \rightarrow X^i_t$ in $\Gmin$. 
%% % Then the set $X^{P_j}_{t-1}$ blocks all paths between
%% % $X^j_{t-1}$ and $X^i_t$ in the moral graph $(\G_{\text{An}(X^i_t\cup X^j_{t-1}\cup
%% %  pa(X^i_t,\G))})^m$
%% From Proposition \ref{prop:ijIndep}, we have the conditional independence, 
%% $$ X^i_t \indep X^j_{t-1} \ \vert \ X^{\pj}_{t-1}. $$
%% \end{enumerate}

%\subsection{Proof of Proposition \ref{prop:G1} and }\label{app:proofG1}

\paragraph{}
\begin{proof}[{\textbf{Proof of Proposition \ref{prop:G1}}}]

First, from Corollary \ref{cor:GqInGmin}, $ \Gmin \supseteq \gun$. 

Second, let $X$ be a Gaussian process and $(X^j_{t-1},X^i_t) \in E(\Gmin)$, then
  according to Proposition \ref{prop:Gmin}, $X^i_t \not \indep X^j_{t-1} \ \vert \ X^{\pj}_{t-1}$.  $\ \ $ Since $X$ is
Gaussian, this implies

%the conditional covariance is non null 
%\begin{center}
\noindent $Cov (X^i_t,X^j_{t-1} \vert X^{\pj}_{t-1} ) \not= 0.$   
%\end{center}
%As $\NpaMaxGmin \leq 1$, $X^j_{t-1}$ is the only parent of $ X^i_t$ in $\Gmin$. 

Now assume that there exists $k\not=j$, such that $X^i_t \indep X^j_{t-1}\ \vert \
X^k_{t-1} $
% $X^i_t $ and $X^j_{t-1} $ are conditionally independent given $X^k_{t-1} $
ie $(X^j_{t-1},X^i_t)
\notin E(\gun)$.
% and $Cov(X^i_t,X^j_{t-1} \vert X^k_{t-1} )=0$. 
We are going to prove that this contradicts the nullity of covariance  $Cov (X^i_t,X^j_{t-1} \vert X^{\pj}_{t-1} )\not=0$. 

Let  $l$  be an  element  of $\p  \bs  \{j,k\}$.  
 The conditional  covariance
 $Cov(ij\vert k,l) = Cov(X^i_t ,X^j_{t-1} \ \vert X^k_{t-1} , X^l_{t-1})$ can be written,

% \[
% \begin{array}{lcl}
% Cov(ij\vert k,l) &= &Cov(X^i_t ,X^j_{t-1} \ \vert X^k_{t-1})
%  - \ \frac{Cov(X^j_{t-1},X^l_{t-1}
%   \vert X^k_{t-1}) Cov(X^i_t ,X^l_{t-1} \ \vert X^k_{t-1})}{Var(X^l_{t-1} \vert X^k_{t-1})},\\
% & =& Cov(X^i_t  ,X^j_{t-1} \
% \vert X^k_{t-1})\ \times \left[ 1-\frac{(Cov(X^j_{t-1},X^l_{t-1} \vert X^k_{t-1}))^2}{Var(X^j_{t-1}      \vert
%     X^k_{t-1})Var(X^l_{t-1} \vert X^k_{t-1} )} \right] \\

% && - \ \frac{Cov(X^j_{t-1},X^l_{t-1}
%    \vert       X^k_{t-1})       Cov(X^i_t       ,X^l_{t-1}       \       \vert
%    X^k_{t-1},X^j_{t-1})}{Var(X^l_{t-1} \vert X^k_{t-1})}. 
% \end{array}
% % \]

\begin{eqnarray*}
Cov(ij\vert k,l) &\!=\! &Cov(X^i_t ,X^j_{t-1} \ \vert X^k_{t-1})
 \!-\! \frac{ Cov(X^i_t ,X^l_{t-1} \ \vert X^k_{t-1}) Cov(X^j_{t-1},X^l_{t-1}
  \vert X^k_{t-1})}{Var(X^l_{t-1} \vert X^k_{t-1})},\\
&\! =\!& Cov(X^i_t  ,X^j_{t-1} \
\vert X^k_{t-1}) \times \left[ 1-\frac{(Cov(X^j_{t-1},X^l_{t-1} \vert X^k_{t-1}))^2}{Var(X^j_{t-1}      \vert
    X^k_{t-1})Var(X^l_{t-1} \vert X^k_{t-1} )} \right]
\end{eqnarray*}
%\begin{flushright}{
$$\hspace{5cm} - \ \frac{Cov(X^j_{t-1},X^l_{t-1}\vert X^k_{t-1}) Cov(X^i_t
      ,X^l_{t-1} \ \vert X^k_{t-1},X^j_{t-1})}{Var(X^l_{t-1} \vert X^k_{t-1})}.$$
      %}\end{flushright}
%\end{eqnarray*}
%\newpage
However both terms in the latter expression of $Cov(ij\vert k,l) $ are null:
%$$Cov((t),\vj(t-1)      \vert  \vk(t-1),\vl(t-1)), \ \ \forall t \geq 1.$$
\begin{itemize}
\item since $X^i_t \indep X^j_{t-1}\ \vert \ X^k_{t-1} $,
% the conditional covariance
then $Cov(X^i_t ,X^j_{t-1} \ \vert X^k_{t-1} )=0$,
%is null

% and we have,
% \begin{equation*}
% Cov(ij\vert k,l) = - \ \frac{Cov( X^j_{t-1},X^l_{t-1} \vert X^k_{t -1}) Cov(X^i_t,
%  X^l_{t-1} \ \vert X^k_{t - 1},X^j_{t-1})}{V(X^l_{t-1} \vert X^k_{t - 1})}. 
% \end{equation*}

\item as $\NpaMaxGmin \leq 1$, $X^j_{t-1} $  is the only parent of $X^i_t $ in
  $\Gmin$.    So   the   variable   $X^j_{t-1}$   and  thus   also   the   set
  $(X^j_{t-1},X^k_{t-1})$ blocks all paths between $X^l_{t-1}$ and $X^i_t $ in
  the  moral  graph of  the  smallest  ancestral  set containing  $X^i_t  \cup
  X^{j,k,l}_{t-1}$.  Then we  have,  $X^i_t \indep  X^l_{t-1}\  \vert \  \left
    \{X^j_{t-1},X^k_{t-1}\right \}$,  that is  $ Cov(X^i_t ,X^l_{t-1}  \ \vert
  X^k_{t-1},X^j_{t-1}) =0$.
\end{itemize}

Then $Cov(ij\vert k,l) =0$. 
By  induction,   we  obtain $Cov (X^i_t,X^j_{t-1} \vert X^{\pj}_{t-1} )=0$   leading  to  a
contradiction    with    $(X^j_{t-1},X^i_t)    \in    E(\Gmin)$.    Therefore
$(X^j_{t-1},X^i_t) \in \gun$ and  we have $ \Gmin \subseteq~\gun$.

\end{proof}

%\paragraph{}
\begin{proof}[{\textbf{Proof of Prop \ref{prop:GminInGq} }}]

Let $(X^j_{t-1} ,X^i_t)  \in E(\Gmin)$.  Assume that $(X^j_{t-1}
  ,X^i_t) \notin E(\gq)$ then there exists a subset of $q$ variables $X^Q_{t-1}$
  with   respect  to   which   $X^j_{t-1}$  and   $X^i_t$  are   conditionally
  independent. 
  From faithfulness, the subset  $X^Q_{t-1}$ separates $X^j_{t-1}$ and $X^i_t$
  in  the moral graph  of the  smallest ancestral  set containing  $X^i_t \cup
  X^j_{t-1}  \cup  X^Q_{t-1}$.  This  contradicts  the presence  of  the  edge
  $(X^j_{t-1} ,X^i_t)$ in $\Gmin$. 
%  (This  moral  graph  contains also  the
% possible  parents of  $X^i_t$  which  are not  in  the subset  $X^Q_{t-1}$). 
\end{proof}

\paragraph{}
\begin{proof}[{\textbf{Proof of Prop \ref{prop:NpaGq} }}]

From faithfulness, $\Gmin \subseteq \gq$.  Then for all $i$ in $\p$, for all $t >1$, we have
  $  N_{pa}(X^i_t,\Gmin)  \  \leq  \  N_{pa}(X^i_t,\gq) \  \leq  ~q$.
  
     From Proposition   \ref{prop:infq},    $(X^j_{t-1},X^i_t)   \notin   E(\Gmin)   \
  \Rightarrow \ (X^j_{t-1},X^i_t) \notin  E(\gq) $, that is 
$ (X^j_{t-1},X^i_t) \in~E(\gq) \ \Rightarrow \ (X^j_{t-1},X^i_t) \in~E(\Gmin) $.  

\end{proof}

%%\newpage
\vspace{0.3cm}

\bibliographystyle{plain}
%\ptsize{12}
\bibliography{biblioDBN}

\begin{thebibliography}{10}

\bibitem{G1DBN}
{G1DBN}: {A} package performing {D}ynamic {B}ayesian {N}etwork inference, by
  {S}. {L}\`ebre, available from the {C}omprehensive {R} {A}rchive {N}etwork at
  \texttt{http://cran.r-project.org/web/packages/{G1DBN}/index.html}.

\bibitem{SuppMat}
Supplementary {M}aterial available is at \texttt{https://lsiit.u-strasbg.fr/}\\
  \texttt{fdbt-fr/index.php/g1dbn\_supplementary\_material}.

\bibitem{Beal05}
M.J. Beal, F.L. Falciani, Z.~Ghahramani, C.~Rangel, and D.~Wild.
\newblock A {B}ayesian approach to reconstructing genetic regulatory networks
  with hidden factors.
\newblock {\em Bioinformatics}, 21:349--356, 2005.

\bibitem{Benjamini95}
Y.~Benjamini and Y.~Hochberg.
\newblock Controlling the false discovery rate: A practical and powerful
  approach to multiple testing.
\newblock {\em Journal of the Royal Statistical Society, Serie B}, 57:289--300,
  1995.

\bibitem{Butte00}
A.~J. Butte, P.~Tamayo, D.~Slonim, T.~R. Golub, and I.~S. Kohane.
\newblock Discovering functional relationships between {RNA} expression and
  chemotherapeutic susceptibility using relevance networks.
\newblock {\em PNAS}, 97(22):12182--12186, October 2000.

\bibitem{Roverato06}
R.~Castelo and A.~Roverato.
\newblock Graphical model search procedure in the large p and small n paradigm
  with applications to microarray data.
\newblock {\em Journal of Machine Learning Research}, 7:2621--2650, 2006.

\bibitem{Cox96}
D~R. Cox and N.~Wermuth.
\newblock {\em Multivariate dependencies: Models, analysis and interpretation}.
\newblock Chapman and Hall, London, 1996.

\bibitem{Fuente04}
A.~De~la Fuente, N.~Bing, I.~Hoeschele, and P.~Mendes.
\newblock Discovery of meaningful associations in genomic data using partial
  correlation coefficients.
\newblock {\em Bioinformatics}, 20:3565--3574, 2004.

\bibitem{Edwards95}
D.~Edwards.
\newblock {\em Introduction to Graphical Modelling}.
\newblock Springer-Verlag, New York, 1995.

\bibitem{efron05}
B.~Efron.
\newblock Local false discovery rates.
\newblock {\em Technical Report number. Dept. of Statistics, Stanford
  University.}, 2005.

\bibitem{efron04}
B.~Efron, T.~Hastie, I.~Johnstone, and R.~Tibshirani.
\newblock Least angle regression.
\newblock {\em Annals of Statistics}, 32(2):407--499, 2004.

\bibitem{Fox02}
J.~Fox.
\newblock {\em An {R} and {S-Plus} companion to applied regression}.
\newblock Sage Publications, Thousand Oaks, CA, USA, 2002.

\bibitem{Friedman00}
N.~Friedman, M.~Linial, I.~Nachman, and D.~Pe'er.
\newblock Using {B}ayesian networks to analyse expression data.
\newblock {\em Journal of Computational Biology}, 7(3-4):601--620, 2000.

\bibitem{Friedman98}
N.~Friedman, K.~Murphy, and S.~Russell.
\newblock Learning the structure of dynamic probabilistic networks.
\newblock In {\em Proceedings of the 14th conference on the Uncertainty in
  Artificial Intelligence}, pages 139--147, SM, CA, USA, Morgan Kaufmann, 1998.

\bibitem{Husmeier03}
Dirk Husmeier.
\newblock Sensitivity and specificity of inferring genetic regulatory
  interactions from microarray experiments with dynamic {B}ayesian networks.
\newblock {\em Bioinformatics}, 19(17):2271--2282, 2003.

\bibitem{Imoto02}
S.~Imoto, T.~Goto, and S.~Miyano.
\newblock Estimation of genetic networks and functional structures between
  genes by using {B}ayesian networks and nonparametric regression.
\newblock In {\em Pacific Symposium on Biocomputing 7}, pages 175--186, 2002.

\bibitem{Imoto03}
S.~Imoto, S.~Kim, T.~Goto, S.~Aburatani, K.~Tashiro, S.~Kuhara, and S.~Miyano.
\newblock {B}ayesian network and nonparametric heteroscedastic regression for
  nonlinear modeling of genetic network.
\newblock {\em Journal of Bioinformatics Computational Biology}, 2:231--252,
  2003.

\bibitem{Kim03}
S.~Kim, S.~Imoto, and S.~Miyano.
\newblock Inferring gene networks from time series microarray data using
  dynamic {B}ayesian networks.
\newblock {\em Briefings in Bioinformatics}, 4(3):228, 2003.

\bibitem{Kim04}
S.~Kim, S.~Imoto, and S.~Miyano.
\newblock Dynamic {B}ayesian network and nonparametric regression for nonlinear
  modeling of gene networks from time series gene expression data.
\newblock {\em Biosystems}, 75(1-3):57--65, 2004.

\bibitem{Lauritzen96}
S.~L. Lauritzen.
\newblock {\em {Graphical models}}.
\newblock {Oxford Statistical Science Series}, 1996.

\bibitem{Lee02}
T.~I. Lee, N.~J. Rinaldi, F.~Robert, D.~T. Odom, Z.~Bar-Joseph, G.~K. Gerber,
  N.~M. Hannett, C.~T. Harbison, C.~M. Thompson, I.~Simon, J.~Zeitlinger, E.~G.
  Jennings, H.~L. Murray, D.~B. Gordon, B.~Ren, J.~J. Wyrick, J.~B. Tagne,
  T.~L. Volkert, E.~Fraenkel, D.~K. Gifford, and R.~A. Young.
\newblock Transcriptional regulatory networks in \textit{{S}accharomyces
  cerevisiae}.
\newblock {\em Science}, 298(5594):799--804, 2002.

\bibitem{Magwene04}
P.~M. Magwene and J.~Kim.
\newblock Estimating genomic coexpression networks using first-order
  conditional independence.
\newblock {\em Genome Biology}, 5(12), 2004.

\bibitem{Meek95}
C.~Meek.
\newblock Strong completeness and faithfulness in {B}ayesian networks.
\newblock In {\em Proc. of the 11th Annual Conference on Uncertainty in
  Artificial Intelligence}, SF, CA, USA, Morgan Kaufmann Publishers, 1995.

\bibitem{Murphy01}
K.~Murphy.
\newblock The bayes net toolbox for matlab.
\newblock {\em Computing Science and Statistics}, 33, 2001.

\bibitem{Murphy99}
K.~Murphy and S.~Mian.
\newblock Modelling gene expression data using dynamic {B}ayesian networks.
\newblock {\em Technical report, Computer Science Division, University of
  California, Berkeley, CA.}, 1999.

\bibitem{Ong02}
I.~M. Ong, J.~D. Glasner, and D.~Page.
\newblock Modelling regulatory pathways in e. coli from time series expression
  profiles.
\newblock {\em Bioinformatics}, 18(Suppl~1):S241--S248, 2002.

\bibitem{OpgenStrimmer07}
R.~Opgen-Rhein and K.~Strimmer.
\newblock Learning causal networks from systems biology time course data: an
  effective model selection procedure for the vector autoregressive process.
\newblock {\em BMC Bioinformatics}, 8(Suppl. 2):S3, 2007.

\bibitem{Pearl88}
J.~Pearl.
\newblock {\em Probabilistic Reasoning in Intelligent Systems: Networks of
  Plausible Inference}.
\newblock SF, CA, USA, Morgan Kaufmann Publishers, 1988.

\bibitem{Perrin03}
B.-E. Perrin, L.~Ralaivola, A.~Mazurie, S.~Bottani, J.~Mallet, and
  F.~d'Alch{\'e} Buc.
\newblock Gene networks inference using dynamic {B}ayesian networks.
\newblock {\em Bioinformatics}, 19(Suppl 2):S138--S148, 2003.

\bibitem{Rangel04}
C.~Rangel, J.~Angus, Z.~Ghahramani, M.~Lioumi, E.~Sotheran, A.~Gaiba, D.~L.
  Wild, and F.~Falciani.
\newblock Modeling t-cell activation using gene expression profiling and
  state-space models.
\newblock {\em Bioinformatics}, 20(9):1361--1372, 2004.

\bibitem{Strimmer05a}
J.~Schäfer and K.~Strimmer.
\newblock An empirical bayes approach to inferring large-scale gene association
  networks.
\newblock {\em Bioinformatics}, 21:754--764, 2005.

\bibitem{Strimmer05b}
J.~Schäfer and K.~Strimmer.
\newblock A shrinkage approach to large-scale covariance matrix estimation and
  implications for functional genomics.
\newblock {\em Statistical Applications in Genetics and Molecular Biology},
  4(32), 2005.

\bibitem{Smith04}
S.~M. Smith, D.~C. Fulton, T.~Chia, D.~Thorneycroft, A.~Chapple, H.~Dunstan,
  C.~Hylton, S.~C. Zeeman, and A.~M. Smith.
\newblock {Diurnal Changes in the Transcriptome Encoding Enzymes of Starch
  Metabolism Provide Evidence for Both Transcriptional and Posttranscriptional
  Regulation of Starch Metabolism in \textit{Arabidopsis} Leaves}.
\newblock {\em Plant Physiol.}, 136(1):2687--2699, 2004.

\bibitem{Spellman98}
P.~T. Spellman, G.~Sherlock, M.~Q. Zhang, V.~R. Iyer, K.~Anders, M.~B. Eisen,
  P.~O. Brown, D.~Botstein, and B.~Futcher.
\newblock Comprehensive identification of cell cycle-regulated genes of the
  yeast \textit{{S}accharomyces cerevisiae} by microarray hybridization.
\newblock {\em Mol Biol Cell}, 9(12):3273--3297, 1998.

\bibitem{Spirtes93}
P.~Spirtes, C.~Glymour, and R.~Scheines.
\newblock {\em Causation, prediction and search}.
\newblock Springer Verlag, New York (NY), 1993.

\bibitem{Streuer03}
R.~Steuer, J.~Kurths, O.~Fiehn, and W.~Weckwerth.
\newblock Observing and interpreting correlations in metabolomic networks.
\newblock {\em Bioinformatics}, 19(8):1019--1026, 2003.

\bibitem{Sugimoto04}
N.~Sugimoto and H.~Iba.
\newblock Inference of gene regulatory networks by means of dynamic
  differential {B}ayesian networks and nonparametric regression.
\newblock {\em Genome Informatics}, 15(2):121--130, 2004.

\bibitem{yeastract}
M.~C. Teixeira and P.~Monteiro.
\newblock The {YEASTRACT} database: a tool for the analysis of transcription
  regulatory associations in \textit{{S}accharomyces cerevisiae}
  [\texttt{http://www.yeastract.com}].
\newblock {\em Nucleic Acids Research}, 34:D446--D451, 2006.

\bibitem{Tibshirani96}
R.~Tibshirani.
\newblock Regression shrinkage and selection via the lasso.
\newblock {\em Journal of the Royal Statistical Society B}, 58:267--288, 1996.

\bibitem{Toh02a}
H.~Toh and K.~Horimoto.
\newblock Inference of a genetic network by a combined approach of cluster
  analysis and graphical gaussian modeling.
\newblock {\em Bioinformatics}, 18:287--297, 2002.

\bibitem{Toh02b}
H.~Toh and K.~Horimoto.
\newblock System for automatically inferring a genetic network from expression
  profiles.
\newblock {\em J. Biol. Physics}, 28:449--464, 2002.

\bibitem{Tsai05}
H.-K. Tsai, H.~Horng-Shing Lu, and W.-H. Li.
\newblock Statistical methods for identifying yeast cell cycle transcription
  factors.
\newblock {\em {PNAS}}, 102(Sep):13532 -- 13537, 2005.

\bibitem{Waddell00b}
P.~J. Waddell and H.~Kishino.
\newblock Cluster inference methods and graphical models evaluated on nci60
  microarray gene expression data.
\newblock {\em Genome Informatics}, 11:129--140, 2000.

\bibitem{Waddell00a}
P.~J. Waddell and H.~Kishino.
\newblock Correspondence analysis of genes and tissue types and finding
  genetics links from microarray data.
\newblock {\em Genome Informatics}, 11:83--95, 2000.

\bibitem{Wang03}
J.~Wang, O.~Myklebost, and E.~Hovig.
\newblock Mgraph: graphical models for microarray data analysis.
\newblock {\em Bioinformatics}, 19(17):2210--2211, 2003.

\bibitem{Whittaker90}
J.~Whittaker.
\newblock {\em Graphical models in applied multivariate statistics}.
\newblock Wiley, NY, 1990.

\bibitem{Wille06}
A.~Wille and P.~Bühlmann.
\newblock Low-order conditional independence graphs for inferring genetic
  networks.
\newblock {\em Statist. Appl. Genet. Mol. Biol}, 4(32), 2006.

\bibitem{Wille04}
A.~Wille, P.~Zimmermann, E.~Vranova, A.~Fürholz, O.~Laule, and S.~Bleuler.
\newblock Sparse graphical gaussian modeling for genetic regulatory network
  inference.
\newblock {\em Genome Biol}, 5(11), 2004.

\bibitem{Wu04}
F.~X. Wu, W.~J. Zhang, and A.~J. Kusalik.
\newblock Modeling gene expression from microarray expression data with
  state-space equations.
\newblock In {\em Pacific Symposium on Biocomputing}, pages 581--592, 2004.

\bibitem{Wu03}
X.~Wu, Y.~Ye, and K.~R. Subramanian.
\newblock Interactive analysis of gene interactions using graphical gaussian
  model.
\newblock {\em ACM SIGKDD Workshop on Data Mining in Bioinformatics}, 3:63--69,
  2003.

\bibitem{Zou05}
M.~Zou and S.~D. Conzen.
\newblock A new dynamic {B}ayesian network ({DBN}) approach for identifying
  gene regulatory networks from time course microarray data.
\newblock {\em Bioinformatics}, 21(1):71--79, 2005.

\end{thebibliography}

\end{document}